\documentclass{amsart}

\usepackage{amsmath,amssymb,amsthm}
\usepackage{mathrsfs}
\usepackage{stmaryrd}
\usepackage[colorlinks=true,hyperindex, linkcolor=magenta, pagebackref=false, citecolor=cyan]{hyperref}
\usepackage[arrow, matrix, curve]{xy}

\newtheorem{lem}{Lemma}[section]
\newtheorem{cor}[lem]{Corollary}
\newtheorem{prop}[lem]{Proposition}
\newtheorem{thm}[lem]{Theorem}
\newtheorem{def-prop}[lem]{Definition-Proposition}
\newtheorem{def-lem}[lem]{Definition-Lemma}
\theoremstyle{definition}
\newtheorem{definition}[lem]{Definition}
\newtheorem{example}[lem]{Example}

\theoremstyle{definition}
\newtheorem{remark}[lem]{Remark}
\newtheorem{notation}[lem]{Notation}

\newenvironment{mat}
{\left(\begin{smallmatrix}}
	{\end{smallmatrix}\right)}

\newcommand{\Hom}{\operatorname{Hom}}
\newcommand{\End}{\operatorname{End}}
\newcommand{\Aut}{\operatorname{Aut}}
\newcommand{\Gal}{\operatorname{Gal}}

\newcommand{\Ind}{\operatorname{Ind}}
\newcommand{\cInd}{\operatorname{c-Ind}}
\newcommand{\GL}{\operatorname{GL}}
\newcommand{\bQ}{\mathbf{Q}}
\newcommand{\bZ}{\mathbf{Z}}
\newcommand{\bF}{\mathbf{F}}
\newcommand{\Fpbar}{\overline{\mathbf{F}}_p}

\newcommand{\wt}{\widetilde}
\newcommand{\ol}{\overline}

\newcommand{\adm}{\operatorname{adm}}

\newcommand{\Mod}{\operatorname{Mod}}
\newcommand{\sm}{\operatorname{sm}}

\newcommand{\sq}{\operatorname{sq}}

\newcommand{\Sym}{\operatorname{Sym}}

\newcommand{\fadm}{\operatorname{fg.adm}}
\newcommand{\GQp}{\mathscr{G}_{\bQ_p}}
\newcommand{\fin}{\operatorname{fin}}
\newcommand{\phigamma}{\operatorname{\Phi\Gamma^{\operatorname{\acute{e}t}}_{k((X))}}}
\newcommand{\mphigamma}{\operatorname{\wt{\Phi\Gamma}^{\operatorname{\acute{e}t}}_{k((X)),\iota}}}

\newcommand{\bOmega}{\mathbf{\Omega}}

\newcommand\noloc{%
	\nobreak
	\mspace{6mu plus 1mu}
	{:}
	\nonscript\mkern-\thinmuskip
	\mathpunct{}
	\mspace{2mu}
}

\DeclareFontShape{OMX}{cmex}{m}{n}{
	<-7.5> cmex7
	<7.5-8.5> cmex8
	<8.5-9.5> cmex9
	<9.5-> cmex10
}{}
\SetSymbolFont{largesymbols}{normal}{OMX}{cmex}{m}{n}
\SetSymbolFont{largesymbols}{bold}  {OMX}{cmex}{m}{n}

\begin{document}

\title{A mod-$p$ metaplectic Montr\'{e}al functor}

\author{Robin Witthaus}
\email{robin.witthaus@stud.uni-due.de}
\date{\today}

\begin{abstract}
		We extend Colmez's functor defined for $\GL_2(\bQ_p)$ to the category of finitely generated smooth admissible mod-$p$ representations of the two-fold metaplectic cover of $\GL_2(\bQ_p)$. We compute the images of the absolutely irreducible genuine objects and obtain a bijection between the genuine supersingular representations and four-dimensional irreducible Galois representations invariant under twist by all characters of order two. Restricted to genuine objects, the extended functor naturally takes values in the category of what we call metaplectic Galois representations -- Galois representations with a certain extra structure encoding the aforementioned  twist-invariance.
\end{abstract}

\maketitle
\tableofcontents

\section{Introduction}
\label{sec:introduction}
\subsection{The $p$-adic and mod-$p$ local Langlands correspondence for $\GL_2(\bQ_p)$} Integral weight modular forms may be viewed as automorphic representations of $\GL_2(\mathbf{A}_{\mathbf{Q}})$ and, after fixing an auxiliary prime $p$, they have a Galois representation of $\Gal(\ol{\bQ}/\bQ)$ with coefficients in an algebraic closure of $\bQ_p$ attached to them satisfying local-global compatibility at all finite primes; that is to say, for each finite place $\ell$, the smooth complex representation of $\GL_2(\bQ_{\ell})$ appearing as the local component of the automorphic representation is determined by the restriction of the Galois representation to the decomposition group $\Gal(\ol{\bQ}_{\ell}/\bQ_{\ell})$ at $\ell$ through the local Langlands correspondence after passage to the corresponding Weil-Deligne representation. If $\ell\neq p$, then the local Galois representation at $\ell$ can essentially be recovered from the corresponding representation of $\GL_2(\bQ_{\ell})$. At $\ell=p$ however, passage to the Weil-Deligne representation is not fully faithful and hence the global correspondence does not restrict to a bijective correspondence locally at $p$. As explained in \cite{Breuil_intro}, the search for a $p$-adic local Langlands correspondence grew out of the desire to fix this issue. It roughly speaking aims to relate irreducible admissible unitary Banach space representation of $\GL_2(\bQ_p)$ defined over a finite extension of $\bQ_p$ to Galois representations of $\Gal(\ol{\bQ}_p/\bQ_p)$ defined over the same finite extension.

The starting point of the $p$-adic and mod-$p$ local Langlands correspondence for $\GL_2(\bQ_p)$ was the classification of the smooth irreducible (admissible) $\Fpbar$-linear representation by Barthel-Livn\'{e} \cite{Barthel-Livne} (working more generally for finite extensions of $\bQ_p$) and Breuil \cite{Breuil_I}. Namely, Barthel and Livn\'{e} were able to describe all the irreducible representations occuring as a subquotient of a parabolic induction, and, building on their approach via universal spherical Hecke modules, Breuil managed to parametrize all the other --so-called \textit{supersingular}-- representations. This parametrization allowed him to write down a bijection between the set of isomorphism classes of supersingular representations and the set of isomorphism classes of two-dimensional continuous irreducible $\Fpbar$-linear representations of $\GQp$, which already suggested that there should be a mod-$p$ local Langlands correspondence. Motivated by the possible existence of a $p$-adic local Langlands correspondence, Breuil \cite{Breuil_II} computed the reduction of (the completion of) locally algebraic unramified principal series depending on small weights, which led him to the formulation of the semi-simple mod-$p$ local Langlands correspondence as it can be found at the beginning of \textit{loc.\ cit.} In \cite{colmez1}, Colmez observes a close relation between $(\varphi,\Gamma)$-modules and smooth representations of the mirabolic subgroup of $\GL_2(\bQ_p)$ (and thus of the standard Borel if a central character is fixed). Inspired by the work \cite{colmez3} of Colmez, Berger and Breuil \cite{berger_breuil} compute the unitary completion of locally algebraic principal series representations and relate it to two-dimensional crystalline Galois representations via $(\varphi,\Gamma)$-modules; they also prove compatibility with Breuil's semi-simple mod-$p$ correspondence. Colmez \cite{colmez2} has turned his previously mentioned observation into an exact functor from the category of finite length $\GL_2(\bQ_p)$-representations (admitting a central character) to the category of $(\varphi,\Gamma)$-modules, which is equivalent to the category of Galois representations of $\GQp$ via Fontaine's equivalence. This functor is not only defined for mod-$p$ coefficients, in which case it recovers Breuil's correspondence after semi-simplification, but also for torsion coefficients and thus for Banach space representations yielding a \textit{functorial} approach to a $p$-adic Langlands correspondence. This functor has been studied in more detail ever since, see for example the works \cite{cdp} and \cite{paskunas} of Colmez-Dospinescu-Pa\v{s}k\={u}nas and Pa\v{s}k\={u}nas, respectively.

\subsection{Motivation and goal} Half-integeral weight modular forms, on the other hand, may be viewed as genuine automorphic representations of the so-called \textit{metaplectic cover $\wt{\GL}_2(\mathbf{A}_{\mathbf{Q}})$} of $\GL_2(\mathbf{A}_{\mathbf{Q}})$ -- a central extension by the roots of unity $\mu_2$ in $\bQ$, where ``genuine'' means that $\mu_2$ acts non-trivially. It is natural to ask whether these analytic objects also have a Galois representation attached to them in a reasonable way. One possible answer to this question is to make use of Shimura's correspondence (\cite{shimura}, \cite{Gelbart_and_co}, \cite{flicker}), which assigns an integral weight modular form to one of half-integral weight in a suitable Hecke-equivariant fashion, and then pass to the corresponding global Galois representation. This is unsatisfying as it does not give any new information about Galois representations, so it seems worthwhile to look for a direct way of realizing such a metaplectic correspondence.

We would like to investigate this question from a $p$-adic point of view locally at $p$, i.e.\ letting $\wt{\GL}_2(\bQ_p)$ denote the pullback of the global covering to the local group at $p$, we are interested in a metaplectic version of the $p$-adic local Langlands correspondence relating irreducible admissible unitary genuine $p$-adic Banach space representations of $\wt{\GL}_2(\bQ_p)$ to certain $p$-adic Galois representations of $\GQp$. Both, $p$-adic Banach space representations residually of finite length and $p$-adic Galois representations, can be reduced mod-$p$ (and semi-simplified to obtain something well-defined), so any $p$-adic correspondence will give rise to a semi-simple mod-$p$ metaplectic correspondence (at least if $p>2$ so that there is a non-trivial character $\mu_2\to \bF_p^{\times}$). In fact, it seems easier to first study the mod-$p$ situation. A complete classification of the smooth irreducible genuine $\Fpbar$-representations of $\wt{\GL}_2(\bQ_p)$ is given in \cite{witthaus_meta} and the goal of this paper is to establish a relation to mod-$p$ Galois representations by extending Colmez's functor.

\subsection{Results} We fix an odd prime number $p$ and a finite field $k$ of characteristic $p$. We let $G=\GL_2(\bQ_p)$. The metaplectic covering
\[
1\to \mu_2=\{\pm 1\}\to \wt{G}\to G\to 1
\]
is described by an explicit $2$-cocycle relying on the Hilbert symbol, and it splits over the maximal compact open subgroup. Let $\iota\colon \mu_2\hookrightarrow k^{\times}$ be the non-trivial group homomorphism. We let $\Mod^{\sm}_{\wt{G}}(k)$ denote the category of smooth $k$-linear representations of $\wt{G}$. Of particular interest for us is the full subcategory $\Mod^{\sm}_{\wt{G},\iota}(k)$ of \textit{genuine} representations, i.e.\ those on which the central subgroup $\mu_2$ acts via $\iota$.

The category $\phigamma$ of \'{e}tale $(\varphi,\Gamma)$-modules over $k((X))$ is equivalent to the category $\Mod^{\fin}_{\GQp}(k)$ of finite dimensional smooth $k$-linear representations of $\GQp$ via Fontaine's equivalence. Colmez's functor $\mathbf{D}\colon \Mod^{\fadm}_{G}(k)\to \phigamma$ is defined on the category of finitely generated admissible $k$-representations of $G$ and has the property that the image of an object only depends on the restriction of that object to the mirabolic subgroup $P=\begin{mat}
	\bQ_p^{\times} & \bQ_p\\ 0 & 1
\end{mat}$. More formally, using Breuil's generalization \cite{Breuil_phi}, one may view Colmez's functor as defined on the category of smooth $P$-representations at the cost of replacing the target category by its ind-category: $\mathbf{D}\colon \Mod^{\sm}_P(k)\to \Ind(\phigamma)$. When composed with the restriction functor $\Mod^{\fadm}_G(k)\to \Mod^{\sm}_P(k)$, this more general functor takes values in the subcategory of compact objects and recovers Colmez's functor. 

What allows us to extend Colmez's functor is the observation that the pullback $\wt{P}$ of the metaplectic covering to $P$ is simply equal to the product of groups $P\times \mu_2$. We may therefore consider the restriction functor $\Mod^{\fadm}_{\wt{G}}(k)\to \Mod^{\sm}_P(k)$ and Corollary \ref{cor_compact} establishes:

\begin{prop}
	The composition $\Mod^{\fadm}_{\wt{G}}(k)\to \Mod^{\sm}_P(k)\xrightarrow{\mathbf{D}} \Ind(\phigamma)$ takes values in the subcategory of compact objects, yielding the exact functor
	\[
	\mathbf{D}\colon \Mod^{\fadm}_{\wt{G}}(k)\to \phigamma,
	\]
	whose restriction to the full subcategory $\Mod^{\fadm}_G(k)$ recovers Colmez's functor.
\end{prop}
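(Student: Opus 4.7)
The exactness of the composite functor and its agreement with Colmez's functor on $\Mod^{\fadm}_G(k)$ are formal. Restriction $\Mod^{\fadm}_{\wt{G}}(k)\to\Mod^{\sm}_P(k)$ is exact; Breuil's $\mathbf{D}$ on $\Mod^{\sm}_P(k)$ is exact; and for $V$ a $G$-representation viewed as a $\wt{G}$-representation by inflation along $\wt{G}\twoheadrightarrow G$, the restriction to $P$ via the splitting $P\hookrightarrow\wt{P}=P\times\mu_2$ coincides with the usual one via $P\hookrightarrow G$, so the two versions of $\mathbf{D}$ agree in this case. Thus the only content to verify is that $\mathbf{D}(V|_P)$ lies in the full subcategory $\phigamma\subset\Ind(\phigamma)$ of compact objects.

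Since $p$ is odd, the central subgroup $\mu_2\subset\wt{G}$ has order invertible in $k$, so every $V\in\Mod^{\fadm}_{\wt{G}}(k)$ decomposes canonically as $V=V^+\oplus V^-$ into its trivial and $\iota$-isotypic $\mu_2$-components. Both summands are direct factors of $V$, so inherit finite generation (push forward generators) and admissibility (intersect with $V^{\wt{K}}$). The summand $V^+$ descends along $\wt{G}\twoheadrightarrow G$ to a finitely generated admissible $G$-representation, so Colmez's theorem delivers $\mathbf{D}(V^+|_P)\in\phigamma$. The remaining task is to treat the genuine summand $V^-$.

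For $V^-$ I would exploit the splitting $\wt{P}=P\times\mu_2$ together with the Iwasawa decomposition to reduce to the $G$-case after restricting to $P$. Pick a finite-dimensional $\wt{K}$-stable subspace $W\subset V^-$ generating $V^-$ over $\wt{G}$, where $\wt{K}=K\times\mu_2$ is the split preimage of $K=\GL_2(\bZ_p)$. The identity $\wt{G}=\wt{P}\wt{K}$ yields a single $(\wt{P},\wt{K})$-double coset, and a Mackey calculation using $\wt{P}\cap\wt{K}=P_0\times\mu_2$ with $P_0=P\cap K$ gives, as $P$-representations,
\[
(\cInd_{\wt{K}}^{\wt{G}}W)|_P\;\cong\;\cInd_{P_0}^P(W|_{P_0})\;\cong\;(\cInd_K^G(W|_K))|_P,
\]
the second isomorphism being the analogous Iwasawa calculation on the $G$-side combined with the fact that the $P_0$-action on $W$ is unchanged whether $W$ is regarded as a $\wt{K}$- or $K$-representation. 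The surjection $\cInd_{\wt{K}}^{\wt{G}}W\twoheadrightarrow V^-$ transports, on the $P$-side, to a surjection $(\cInd_K^G(W|_K))|_P\twoheadrightarrow V^-|_P$.

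The main obstacle, which I expect to be isolated in the cited Corollary~\ref{cor_compact}, is to show that the source on the left already has compact $\mathbf{D}$-image despite the $G$-representation $\cInd_K^G(W|_K)$ being finitely generated but not admissible, so that Colmez's theorem does not apply in its naive form. One route is to unpack the Breuil--Colmez construction and verify directly that $\mathbf{D}$ of a compact induction from an open compact subgroup of $P$ lies in $\phigamma$; another is to use the admissibility of the quotient $V^-$ to cut $\mathbf{D}$ down to finite type over $k((X))$, for instance via the control of the $N_0$-invariants afforded by admissibility. Granted this input, exactness of $\mathbf{D}$ yields a surjection in $\Ind(\phigamma)$ with compact source onto $\mathbf{D}(V^-|_P)$, and since $\phigamma$ is a full abelian subcategory of $\Ind(\phigamma)$ closed under quotients, $\mathbf{D}(V^-|_P)\in\phigamma$ as required.
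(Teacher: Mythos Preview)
Your proposal has two genuine gaps.

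First, Breuil's functor $\mathbf{D}$ on $\Mod^{\sm}_P(k)$ is only left-exact (Proposition~\ref{exactness_properties}(ii)); right-exactness requires the hypothesis of part~(iv) there. So exactness of the composite is not formal: it is a consequence of the statement that every finitely generated $k\llbracket X\rrbracket[F]$-submodule of $\pi\in\Mod^{\fadm}_{\wt{G}}(k)$ has finite-dimensional $X$-torsion (Corollary~\ref{cor_compact}), which is exactly the substantive input you are trying to avoid.

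Second, and this you acknowledge, the Mackey reduction does not close the argument: $\cInd_K^G(W|_K)$ is finitely generated but not admissible, so Colmez's theorem says nothing about compactness of its image under $\mathbf{D}$, and you give no replacement. (Incidentally, the decomposition $\wt{G}=\wt{P}\wt{K}$ you invoke is false: one has $G=BK=PZK$, but $P\backslash G/K$ is infinite---indexed by the minimal valuation of the bottom row---so there is no single double coset. The Mackey comparison could perhaps be repaired with more bookkeeping, but this would not help with the real obstacle.) Your second suggested route, ``control of the $N_0$-invariants afforded by admissibility,'' is precisely what the paper carries out: Proposition~\ref{prop_finiteness} shows that for $M\in\mathcal{G}(\pi)$ the $P^+$-submodule $\pi_M$ has finite-dimensional $X$-torsion, by first proving that $\pi/X\pi$ is $k[F]$-torsion via reduction to irreducibles and the explicit structure of the genuine supersingular and principal series representations. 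Once this is in hand, $\pi_M[F^{-1}]=\pi$ (from $\wt{G}=P\wt{K}\wt{Z}$) and Proposition~\ref{stabilize} give compactness directly, with no detour through $G$-representations.
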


Putting $P^+=\begin{mat}
	\bZ_p\setminus \{0\} & \bZ_p\\ 0 & 1
\end{mat}$ and letting $Z$ denote the center of $G$, the key for establishing this result is to show the following: Given a finite dimensional $\wt{K}\wt{Z}$-stable subspace $M$ of $\pi\in \Mod^{\fadm}_{\wt{G}}(k)$ generating $\pi$ as a $G$-representation, the $P^+$-subrepresentation of $\pi$ generated by $M$ is admissible in the sense that its subspace of $\begin{mat}
	1 & \bZ_p\\ 0 & 1
\end{mat}$-invariants is finite dimensional. This can either be proved by making sure that an appropriate version of Emerton's formalism of ordinary parts also works for $\Mod^{\fadm}_{\wt{G},\iota}(k)$ to establish an analogue of \cite[Theorem 3.4.7]{ord_parts_II}, or it can be deduced from known properties of the irreducible objects in $\Mod^{\fadm}_{\wt{G}}(k)$. We choose to take the latter approach.\\

Denote by $\mathscr{V}\colon \phigamma\simeq \Mod^{\fin}_{\GQp}(k)$ Fontaine's equivalence and put $\mathbf{V}=\mathscr{V}\circ \mathbf{D}$. Evaluating the extended functor at the genuine supersingular representations, we obtain our main result (Theorem \ref{ss_bij_galois}):

\begin{thm}
	The functor $\mathbf{V}\colon \Mod^{\fadm}_{\wt{G}}(k)\to \Mod^{\fin}_{\mathscr{G}_{\bQ_p}}(k)$ induces a bijection
	\[
	\left\{\begin{array}{c}
		\text{Absolutely irreducible}\\
		\text{genuine supersingular}\\
		\text{$k$-representations of $\wt{G}$}
	\end{array}\right\} \cong \left\{\begin{array}{c}
		\text{Absolutely irreducible}\\
		\text{smooth $\rho\colon \GQp\to \GL_4(k)$}\\
		\text{s.t.\ $\rho\cong \rho\otimes \chi$}\\
		\text{for all characters $\chi$ with $\chi^2=\mathbf{1}$}
	\end{array}\right\},
	\]
	both sides considered up to isomorphism.
\end{thm}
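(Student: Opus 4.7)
The plan is to combine the classification of genuine absolutely irreducible supersingular $k$-representations of $\wt{G}$ from \cite{witthaus_meta} with the factorization of $\mathbf{D}$ through the restriction to $P$ established by the preceding proposition. Fix such a $\wt{\pi}$ and realize it, following the Breuil/Barthel--Livn\'{e} style construction adapted to the metaplectic cover, as a quotient of a compact induction from an open subgroup of the form $\wt{K}\wt{Z}$ by a Hecke-type operator $T$. Using the splitting $\wt{P}=P\times \mu_2$ and an Iwasawa-type calculation $\wt{G}=\wt{K}\wt{Z}\cdot P$, compute $\wt{\pi}|_P$ as a $P$-representation. The rank of $\mathbf{D}(\wt{\pi})$ should come out to $4$, with the factor $4$ tracing back to the fact that the preimage $\wt{Z}$ of $Z=Z(G)$ in $\wt{G}$ is an extension of the genuine center $Z(\wt{G})$ of degree $[\bQ_p^{\times}\noloc (\bQ_p^{\times})^2]=4$, which doubles both the ``weight'' and the ``central character'' freedom present in the classical supersingular picture.

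Next I would identify $\mathbf{V}(\wt{\pi})$ explicitly. Combining the classical Colmez recipe with the above description of $\wt{\pi}|_P$, I would show that $\mathbf{V}(\wt{\pi})$ is induced from the degree-$4$ abelian extension of $\bQ_p$ corresponding under local class field theory to $(\bQ_p^{\times})^2\subset \bQ_p^{\times}$:
\[
\mathbf{V}(\wt{\pi})\cong \Ind_H^{\GQp}(\sigma),
\]
where $H\subset \GQp$ is the common kernel of the four quadratic characters of $\GQp$ and $\sigma$ is a character of $H$ extracted from the $\wt{K}\wt{Z}$-type $\wt{\sigma}$. Genuineness and supersingularity of $\wt{\pi}$ should force the four $\GQp/H$-conjugates of $\sigma$ to be pairwise distinct, so Mackey's irreducibility criterion yields absolute irreducibility of the $4$-dimensional $\mathbf{V}(\wt{\pi})$; invariance under every quadratic twist is then automatic from the shape of the induction.

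For the bijection, injectivity follows because the pair $(H,\sigma)$ up to $\GQp/H$-conjugation is recovered from $\mathbf{V}(\wt{\pi})$ via Mackey, and the classification of \cite{witthaus_meta} reconstructs $\wt{\pi}$ from this data. For surjectivity, given an absolutely irreducible smooth $\rho\colon \GQp\to \GL_4(k)$ invariant under every quadratic twist, Clifford theory forces $\rho\cong \Ind_H^{\GQp}(\sigma)$ with $\sigma$ a character of $H$ having pairwise distinct $\GQp/H$-conjugates (the dimension count $4=[\GQp\noloc H]\cdot 1$ forcing $\sigma$ to be one-dimensional); the inverse of the dictionary above then associates to $(H,\sigma)$ a genuine supersingular $\wt{\pi}$ with $\mathbf{V}(\wt{\pi})\cong \rho$.

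The main obstacle will be establishing this dictionary, i.e., computing Colmez's functor on a compact induction $\cInd_{\wt{K}\wt{Z}}^{\wt{G}}(\wt{\sigma})/T$ for $\wt{G}$, which requires generalizing the classical Breuil calculation while correctly tracking the fourfold metaplectic enlargement. Once that dictionary is set up --- identifying $\wt{K}\wt{Z}$-types on one side with $\GQp/H$-orbits of characters of $H$ on the other --- verifying that it yields a bijection between the two sides of the theorem becomes a matter of combinatorics. I expect the classification of \cite{witthaus_meta} is designed precisely so that this combinatorics matches, with quadratic-twist-invariance on the Galois side being the reflection of genuineness on the $\wt{G}$-side.
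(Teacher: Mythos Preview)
Your outline has the right endpoint --- the image under $\mathbf{V}$ of a genuine supersingular representation is indeed a $4$-dimensional absolutely irreducible Galois representation invariant under all quadratic twists, and the final Remark of the paper confirms it can be rewritten as $\Ind_{\mathscr{G}_L}^{\GQp}(\omega_L^h)$ for odd $h$ --- but two of the load-bearing steps would not go through as stated, and the paper takes a different route at each.

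For the computation of $\mathbf{V}(\wt\pi)$, you propose to ``compute $\wt\pi|_P$'' via Iwasawa and read off a character $\sigma$ of $H$ ``extracted from the $\wt K\wt Z$-type''. The paper does not compute $\wt\pi|_P$ in closed form; it instead takes $M=\wt\pi^{I_1}$, uses the cycle relations $X^{s_i}Fv_i=c_iv_{i+1}$ among the four Iwahori-fixed vectors established in \cite{witthaus_meta}, and feeds the $P^+$-submodule $\pi_M$ into the explicit $(\varphi,\Gamma)$-module recipe of Corollary~\ref{Galois_calculate}. The answer (Proposition~\ref{image_ss}(ii)) is $\omega^{r-1}\mu_\lambda\otimes\Ind^{\GQp}_{\mathscr{G}_{\bQ_{p^4}}}(\omega_4^{(p^2+1)s/2})$ with $\lambda^4=(-1)^{(p-1)/2}(r!)^2(r'!)^2$: the unramified twist involves factorials of $r$, so the passage from type to Galois parameter is not the clean combinatorial dictionary you anticipate. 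For injectivity the paper is likewise more direct than inverting such a dictionary: Proposition~\ref{image_ss}(iii) gives a $P$-equivariant isomorphism $\bOmega\mathbf{D}(\wt\pi)\cong\wt\pi|_P$, so $\mathbf{V}(\wt\pi)$ already determines $\wt\pi|_P$, and a genuine supersingular is determined by its $P$-restriction \cite[Corollary~4.55]{witthaus_meta}.

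For surjectivity, your Clifford step has a gap. From $\rho\cong\rho\otimes\chi$ for every quadratic $\chi$ you want $\rho|_H$ to split into four characters, but abstract Clifford theory for the quotient $\GQp/H\cong(\bZ/2\bZ)^2$ does not by itself exclude $\rho|_H$ having $2$-dimensional irreducible constituents; your sentence ``the dimension count $4=[\GQp:H]\cdot 1$ forcing $\sigma$ to be one-dimensional'' presupposes the conclusion. One can close this (e.g.\ by analysing the commutator of the self-intertwiners $\rho\to\rho\otimes\chi_i$), but the paper bypasses it: it starts from the parametrisation of all $4$-dimensional absolutely irreducibles as $\Ind^{\GQp}_{\mathscr{G}_{\bQ_{p^4}}}(\omega_4^h)\otimes\mu_\lambda$ (Proposition~\ref{irred_Galois}), determines by a congruence argument which $h$ yield $\omega^{(p-1)/2}$-invariance (Lemma~\ref{galois_lemma1}), normalises $h$ to the range $3\le h'\le 2p-1$ matching the supersingular weights (Lemma~\ref{galois_lemma2}), and checks this finite list against Proposition~\ref{image_ss}(ii).
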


This theorem consists of two parts, namely surjectivity and injectivity. It is not difficult to write down what the Galois representations appearing as the target of the bijection look like. The hard part for showing surjectivity is to compute the image of a genuine supersingular representation explicitly, see Proposition \ref{image_ss} for a description. The detailed study of the supersingular objects in \cite{witthaus_meta} allows us to put ourselves into a setup to which we can apply a general result of Florian Herzig on computing Galois representations attached to certain ``nice'' smooth $k$-representations of $P^+$, see Section \ref{section_Galois_calculate}. Injectivity comes down to the fact, proved in \cite{witthaus_meta}, that two genuine supersingular representations are isomorphic if and only if their restrictions to $P$ are.

There is a conceptual explanation for the twist-invariance under all characters of order two. Namely, consider the pullback $1\to \mu_2\to \wt{\bQ_p^{\times}}\to \bQ_p^{\times}\to 1$ of the metaplectic cover to the center $Z\cong \bQ_p^{\times}$ of $G$, and for $\tilde{z}\in \wt{\bQ_p^{\times}}$ with image $z\in \bQ_p^{\times}$, define $\chi_{\tilde{z}}=(\mu_{-1}^{v(z)}\omega^{v(z)}\mu_{\omega(z)})^{\frac{p-1}{2}}$, where $v\colon \bQ_p^{\times}\to \bZ$ is the $p$-adic valuation, $\mu_{?}$ is the unramified character mapping $p$ to $?$ and $\omega$ is the mod-$p$ cyclotomic character. The conjugation action of an element $\tilde{z}\in \wt{Z}\cong \wt{\bQ_p^{\times}}$ is given by the $\wt{G}\supset \mu_2$-valued character $\chi_{\tilde{z}}\circ \det$.
In particular, the action of $\tilde{z}$ on a genuine representation $\pi$ defines a $\wt{G}$-equivariant isomorphism between $\pi$ and its twist by $\chi_{\tilde{z}}\circ \det$. Now, the functor $\mathbf{D}$ is compatible with twist by a character $\chi$ in the sense that $\mathbf{D}(\pi\otimes \chi\circ \det)=\mathbf{D}(\pi)\otimes \chi$, meaning that the underlying vector space remains the same but the $(\varphi,\Gamma)$-action is twisted according to $\chi$, see Definition \ref{phi_gamma_twist}. Thus, any Galois representation arising as the image of a genuine representation under $\mathbf{V}$ is invariant under twist by the characters $\chi_{\tilde{z}}$, for $\tilde{z}\in \wt{\bQ_p^{\times}}$, which assemble into a \textit{surjective} homomorphism $\wt{\bQ_p^{\times}}\twoheadrightarrow \Hom^{\operatorname{cts}}(\bQ_p^{\times},\mu_2)$. This more conceptual point of view leads to the following notion.

\begin{definition}
	A \textit{metaplectic $(\varphi,\Gamma)$-module over $k((X))$} is a pair $(D,\theta)$ consisting of $D\in \phigamma$ and $\theta\colon \wt{\bQ_p^{\times}}\to \Aut_{k((X))}(D)$ a smooth and genuine $k((X))$-linear action such that $\theta(\tilde{z})\colon D\cong D\otimes \chi_{\tilde{z}}$ is $(\varphi,\Gamma)$-equivariant. Denote the resulting category by $\mphigamma$.
\end{definition}

The extended functor then naturally induces the \textit{metaplectic Montr\'{e}al functor} 
\[
\wt{\mathbf{D}}\colon \Mod^{\fadm}_{\wt{G},\iota}(k)\to \mphigamma.
\]
Using Fontaine's equivalence, one may translate the additional structure $\theta$ on a metaplectic $(\varphi,\Gamma)$-module $(D,\theta)$ into additional structure on the Galois representation $\mathscr{V}(D)$, and call the resulting object a metaplectic Galois representation. We note however that the additional structure on such a metaplectic Galois representation is \textit{not} simply given by a (smooth and genuine) action on the underlying $k$-vector space defining appropriate isomorphisms, see Section \ref{section_transferring}.\\

If we let $S\subset \bQ_p^{\times}$ denote the subgroup of squares, then the characters $\chi_{\tilde{z}}$ are trivial for all $\tilde{z}\in \wt{S}$, hence we have a restriction functor $\mphigamma\to \Mod^{\sm}_{\wt{S},\iota}(\phigamma)\simeq \Mod^{\sm}_S(\phigamma)$, where the equivalence follows from the equality of groups $\wt{S}=S\times \mu_2$ and is given by $-\boxtimes \iota$. This functor admits an explicit left adjoint denoted by $\Ind^{\wt{\bQ_p^{\times}}}_{\wt{S}}(-\boxtimes \iota)$, see Section \ref{section_Frobenius}, nicely interacting with the functor $\wt{\mathbf{D}}$ as explained in Proposition \ref{D_comp_Ind}.\\

We further utilize Colmez's construction \cite{colmez1} from $(\varphi,\Gamma)$-modules to smooth representations of the mirabolic subgroup, to obtain a functor $\wt{\bOmega}\colon \mphigamma \to \Mod^{\sm}_{\wt{B},\iota}(k)$ from metaplectic $(\varphi,\Gamma)$-modules to smooth genuine representations of the metaplectic Borel. Moreover, there is a natural transformation $\wt{\bOmega}\circ \wt{\mathbf{D}}\Rightarrow (-)|_{\wt{B}}$ of functors $\Mod^{\fadm}_{\wt{G},\iota}(k)\to \Mod^{\sm}_{\wt{B},\iota}(k)$, see Proposition \ref{can_B_equiv_map}.\\

Finally, we compute the images of the absolutely irreducible representations in $\Mod^{\sm}_{\wt{G},\iota}(k)$. As we will recall in more detail in the main text below, these are either supersingular or principal series representations. Putting $B_S=\begin{mat}
	\bQ_p^{\times} & \bQ_p\\ 0 & S
\end{mat}$, we have $\wt{B}_S=B_S\times \mu_2$, and any absolutely irreducible genuine principal series is of the form $\tilde{\pi}(\chi_1,\chi_2)=\Ind^{\wt{G}}_{\wt{B}_S}(\chi_1\otimes \chi_2 \boxtimes \iota)$ for smooth characters $\chi_1$, $\chi_2$ of $\bQ_p^{\times}$ and $S$, respectively. The isomorphism class of $\tilde{\pi}(\chi_1,\chi_2)$ is uniquely determined by the pair $(\chi_1|_S,\chi_2|_S)$. Denoting by $\tilde{\pi}(\chi_1,\chi_2)^{\operatorname{bc}}\subset \tilde{\pi}(\chi_1,\chi_2)\cong \Ind^{\wt{G}}_{\wt{B}}\Ind^{\wt{T}}_{\wt{T}_S}(\chi_1\otimes \chi_2\boxtimes \iota)$ the $\wt{B}$-invariant subspace of functions whose support is contained in the preimage of the big cell $B\begin{mat}
	0 & 1\\1 & 0
\end{mat}B$ under the projection $\wt{G}\twoheadrightarrow G$, we obtain the following result (Proposition \ref{image_ps}).

\begin{prop}
	Let $\chi_1,\chi_2\colon \bQ_p^{\times}\to k^{\times}$ be smooth characters, and let $\mathscr{D}(\chi_2)$ be the $(\varphi,\Gamma)$-module corresponding to $\chi_2$ viewed as a character of $\GQp$ via local class field theory.
	\begin{enumerate}
		\item[{\rm (i)}] The object $\Ind^{\wt{\bQ_p^{\times}}}_{\wt{S}}((\chi_1\chi_2)|_S \boxtimes \mathscr{D}(\chi_2)\boxtimes \iota)$ is irreducible in $\mphigamma$.
		\item[{\rm (ii)}] There is an isomorphism
		\[
		\wt{\mathbf{D}}(\tilde{\pi}(\chi_1,\chi_2)) \cong \Ind^{\wt{\bQ_p^{\times}}}_{\wt{S}}((\chi_1\chi_2)|_S \boxtimes \mathscr{D}(\chi_2)\boxtimes \iota)
		\]
		in the category $\mphigamma$.
		\item[{\rm (iii)}] The canonical $\wt{B}$-equivariant map $\wt{\bOmega}\wt{\mathbf{D}}(\tilde{\pi}(\chi_1,\chi_2))\to \tilde{\pi}(\chi_1,\chi_2)$ has image $\tilde{\pi}(\chi_1,\chi_2)^{\operatorname{bc}}$ and induces a non-split short exact sequence
		\[
		0\to \Ind^{\wt{T}}_{\wt{T}_S}(\chi_2\omega \otimes (\chi_1 \omega^{-1})|_S\boxtimes \iota)\to \wt{\bOmega}\wt{\mathbf{D}}(\tilde{\pi}(\chi_1,\chi_2))\to \tilde{\pi}(\chi_1,\chi_2)^{\operatorname{bc}}\to 0.
		\]
	\end{enumerate}
\end{prop}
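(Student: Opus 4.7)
The plan is to reduce (ii) to Colmez's original computation of $\mathbf{D}$ on $\GL_2(\bQ_p)$-principal series via the transitivity isomorphism $\tilde{\pi}(\chi_1,\chi_2)\cong \Ind^{\wt{G}}_{\wt{B}}\sigma$ with $\sigma=\Ind^{\wt{T}}_{\wt{T}_S}(\chi_1\otimes\chi_2\boxtimes\iota)$, then derive (iii) by applying Colmez's description of the comparison map $\bOmega\mathbf{D}\to(-)|_{B}$ componentwise across the cosets of $\wt{T}_S$ in $\wt{T}$, and finally deduce (i) by Frobenius reciprocity along the equivalence $\Mod^{\sm}_{\wt{S},\iota}(\phigamma)\simeq \Mod^{\sm}_{S}(\phigamma)$.

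For (ii), I would first compute the underlying $(\varphi,\Gamma)$-module of $\wt{\mathbf{D}}(\tilde{\pi}(\chi_1,\chi_2))$ via the restriction to $\wt{P}=P\times\mu_2$. The Bruhat decomposition applied to $\Ind^{\wt{G}}_{\wt{B}}\sigma$ yields a short exact sequence of $\wt{B}$-representations whose subobject is the big cell subspace $\tilde{\pi}(\chi_1,\chi_2)^{\operatorname{bc}}$ and whose quotient is $\sigma$ concentrated on the closed Bruhat stratum. Since $\sigma$ is a finite-dimensional $U$-invariant $\wt{T}$-representation, $\mathbf{D}$ kills it and only the big cell contributes. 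The big cell further decomposes into contributions indexed by the cosets of $\wt{T}_S$ in $\wt{T}$, and on each coset Colmez's identification $\mathbf{D}(\Ind^{G}_{B}(\eta_1\otimes\eta_2))\cong \mathscr{D}(\eta_2)$ yields $\mathscr{D}(\chi_2)$; these pieces assemble to the underlying $(\varphi,\Gamma)$-module of $\Ind^{\wt{\bQ_p^{\times}}}_{\wt{S}}((\chi_1\chi_2)|_S\boxtimes \mathscr{D}(\chi_2)\boxtimes\iota)$. The metaplectic structure $\theta$ is then obtained by observing that the center $\wt{Z}\cong \wt{\bQ_p^{\times}}$ acts on $\tilde{\pi}(\chi_1,\chi_2)$ through the genuine character $(\chi_1\chi_2)|_S\boxtimes\iota$ on $\wt{S}$, and for $\tilde{z}\in \wt{\bQ_p^{\times}}\setminus\wt{S}$ its action furnishes the twist isomorphism $\tilde{\pi}(\chi_1,\chi_2)\cong\tilde{\pi}(\chi_1,\chi_2)\otimes \chi_{\tilde{z}}\circ\det$; applying $\mathbf{D}$ transports this into the required $\wt{\bQ_p^{\times}}$-action.

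For (iii), Colmez's classical calculation shows that for $\pi=\Ind^{G}_{B}(\eta_1\otimes\eta_2)$ the canonical map $\bOmega\mathbf{D}(\pi)\to \pi|_B$ has image $\pi^{\operatorname{bc}}$ and non-split kernel $\eta_2\omega\otimes\eta_1\omega^{-1}$ viewed as a $T$-character extended trivially across $U$. Applying this across each coset of $\wt{T}_S$ in $\wt{T}$ and reassembling, the image is $\tilde{\pi}(\chi_1,\chi_2)^{\operatorname{bc}}$ and the kernel becomes $\Ind^{\wt{T}}_{\wt{T}_S}(\chi_2\omega\otimes(\chi_1\omega^{-1})|_S\boxtimes\iota)$. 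Non-splitness reduces to the classical case on a single cell, since a splitting would produce a $\wt{T}$-stable complement inside $\wt{\bOmega}\wt{\mathbf{D}}(\tilde{\pi}(\chi_1,\chi_2))$ whose restriction to one Bruhat stratum would split the corresponding classical sequence. For (i), a sub-object of $\Ind^{\wt{\bQ_p^{\times}}}_{\wt{S}}((\chi_1\chi_2)|_S\boxtimes \mathscr{D}(\chi_2)\boxtimes\iota)$ corresponds under Frobenius reciprocity to an $S$-stable $(\varphi,\Gamma)$-submodule of $\mathscr{D}(\chi_2)$; since $\mathscr{D}(\chi_2)$ is one-dimensional and hence irreducible over $k((X))$, a standard Mackey argument over the finite quotient $\wt{\bQ_p^{\times}}/\wt{S}\cong \bQ_p^{\times}/S$ then forces the induction to be irreducible.

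The main obstacle I expect is the careful bookkeeping of the metaplectic cocycle when transporting Colmez's classical computation from $G$ to $\wt{G}$ and ensuring that all identifications take place in the full category $\mphigamma$ rather than merely on the underlying $(\varphi,\Gamma)$-modules; a secondary difficulty is establishing non-splitness in (iii) in the presence of the $\wt{T}_S$-induced structure, where naive componentwise non-splitness must be shown to survive reassembly under the $\wt{T}/\wt{T}_S$-action.
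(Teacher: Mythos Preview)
Your approach is essentially the same as the paper's: reduce to the big cell via the Bruhat filtration, identify the big cell with an induction from $\wt{B}_S$ of the classical $\GL_2$-big cell (the paper records this as a separate lemma using the projection formula), and then apply Colmez's computation $\mathbf{D}(\pi(\chi_1,\chi_2))=\mathscr{D}(\chi_2)$ together with the compatibility of $\wt{\mathbf{D}}$ and $\wt{\bOmega}$ with $\Ind^{\wt{B}}_{\wt{B}_S}$. The paper packages this last step into two standalone propositions computing $\wt{\mathbf{D}}$ and $\wt{\bOmega}$ on objects induced from $\wt{B}_S$, which avoids the ad hoc ``componentwise across cosets'' bookkeeping you describe.

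There are two points where your proposal diverges from the paper and deserves attention. First, in (i) the Mackey criterion requires not only that $\mathscr{D}(\chi_2)$ be irreducible but that its twists $\mathscr{D}(\chi_2)\otimes\chi_{\tilde z}$ for $\tilde z\in\wt{S}\backslash\wt{\bQ_p^{\times}}$ be pairwise non-isomorphic; you invoke only the former. The paper makes exactly this observation: the restriction to $S$ decomposes as a direct sum of four pairwise non-isomorphic one-dimensional objects, whence irreducibility. Second, your non-splitness argument in (iii) --- restricting a hypothetical $\wt{B}$-splitting to a single coset to contradict the classical non-splitness --- is delicate, since a $\wt{B}$-equivariant section need not a priori respect the Mackey direct-sum decomposition after restriction to $\wt{B}_S$. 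The paper bypasses this entirely: the Pontryagin dual $(\wt{\bOmega}\wt{\mathbf{D}}(\tilde\pi))^{\vee}=\varprojlim_{\psi}D^{\sharp}$ is $X$-torsionfree (being a limit of $k\llbracket X\rrbracket$-lattices), whereas the kernel is finite-dimensional and hence $X$-torsion after dualizing, so no nonzero splitting can exist.
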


Interestingly, these statements can be essentially reduced to the corresponding known results for principal series of $\GL_2(\bQ_p)$.

Concerning absolutely irreducible genuine supersingular representations, we have already explained above that their image under $\mathbf{V}$, or equivalently $\mathbf{D}$, is explicitly described and absolutely irreducible. Parts (iii) and (iv) of Proposition \ref{image_ss} further establish:

\begin{prop} Let $\pi$ be an absolutely irreducible genuine supersingular $k$-representation  of $\wt{G}$, and let $\zeta\boxtimes \iota\colon S\times \mu_2=\wt{S}\cong Z(\wt{G})\to k^{\times}$ be its central character.
	\begin{enumerate}
		\item[{\rm (i)}] The canonical $\wt{B}$-equivariant map $\wt{\bOmega}\wt{\mathbf{D}}(\pi)\xrightarrow{\cong} \pi$ is an isomorphism.
		
		\item[{\rm (ii)}] The metaplectic $(\varphi,\Gamma)$-module $\wt{\mathbf{D}}(\pi)$ is the unique object in $\mphigamma$ whose restriction to $S$ is isomorphic to $\zeta\boxtimes \mathbf{D}(\pi)\in \Mod^{\sm}_S(\phigamma)$ such that the $\wt{B}$-action on $\wt{\bOmega}\wt{\mathbf{D}}(\pi)$ extends to a smooth $\wt{G}$-action.
	\end{enumerate}
\end{prop}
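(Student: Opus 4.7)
The plan is to prove both parts by reducing to the corresponding non-metaplectic statements about Colmez's functor on supersingular $\GL_2(\bQ_p)$-representations (due to Colmez and Pa\v{s}k\={u}nas), combined with the classification of genuine supersingular representations in \cite{witthaus_meta}. For part (i), the canonical $\wt{B}$-equivariant map of Proposition \ref{can_B_equiv_map} is obtained from the classical $P$-equivariant comparison map $\bOmega\mathbf{D}(\pi|_P)\to \pi|_P$, modified by the metaplectic central $\wt{Z}$-action, which on both sides agrees with the central character $\zeta\boxtimes \iota$. Since $\wt{B}=\wt{P}\wt{Z}$, it therefore suffices to prove that the underlying $P$-equivariant map is an isomorphism. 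The structural description of $\pi|_P$ from \cite{witthaus_meta} writes $\pi$ as assembled from supersingular $G$-constituents, and Proposition \ref{image_ss}, giving $\mathbf{D}(\pi)$ as an induction from $\wt{S}$, identifies these constituents on the $(\varphi,\Gamma)$-module side. Applying the classical fact that $\bOmega\mathbf{D}(\sigma)\cong \sigma$ as $B$-representations for every absolutely irreducible supersingular $\sigma \in \Mod^{\fadm}_G(k)$ to each constituent assembles the desired $P$-equivariant isomorphism.

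For part (ii), given $(D,\theta)\in \mphigamma$ satisfying the three stated conditions, let $\sigma$ denote a smooth $\wt{G}$-representation extending the $\wt{B}$-representation $\wt{\bOmega}(D,\theta)$. I would argue in three stages. First, $\sigma$ is finitely generated admissible, which follows from the finite-length behaviour of $\wt{\bOmega}$ together with the hypothesis that its image extends to $\wt{G}$. Second, $\sigma$ is absolutely irreducible: any proper $\wt{G}$-subrepresentation restricts to a proper $\wt{B}$-subrepresentation of $\wt{\bOmega}(D,\theta)$, and the absolute irreducibility of $D$ in $\phigamma$ rules this out. Third, $\sigma$ is supersingular: otherwise the classification would give $\sigma\cong \tilde{\pi}(\chi_1,\chi_2)$ for some characters, and Proposition \ref{image_ps} would force $\mathbf{D}(\sigma)$ to be a direct sum of four one-dimensional $(\varphi,\Gamma)$-modules, contradicting the irreducibility of $D=\mathbf{D}(\pi)$ as an absolutely irreducible $4$-dimensional object. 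At this point, $\sigma$ is an absolutely irreducible genuine supersingular representation, so part (i) applied to $\sigma$ identifies $\wt{\mathbf{D}}(\sigma)\cong (D,\theta)$ in $\mphigamma$, and the injectivity portion of Theorem \ref{ss_bij_galois} applied to $\mathbf{D}(\sigma)=\mathbf{D}(\pi)$ gives $\sigma\cong \pi$, whence $(D,\theta)\cong \wt{\mathbf{D}}(\pi)$.

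I expect the main obstacle to lie in part (ii), specifically in showing that any hypothetical $\wt{G}$-extension $\sigma$ of $\wt{\bOmega}(D,\theta)$ is automatically absolutely irreducible and supersingular. Absolute irreducibility requires a careful analysis of $\wt{B}$-subrepresentations of $\wt{\bOmega}(D,\theta)$ compatible with the extension, while supersingularity hinges on the observation, to be extracted from Proposition \ref{image_ps}, that the underlying $(\varphi,\Gamma)$-module of a principal series image is always reducible, unlike the absolutely irreducible $D=\mathbf{D}(\pi)$. Once both are secured, the remaining steps are formal applications of part (i) and the main theorem.
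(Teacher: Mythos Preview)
Your plan for part (i) rests on a claim that does not hold: the restriction $\pi|_P$ of a genuine supersingular $\wt{G}$-representation is \emph{not} assembled from $P$-restrictions of supersingular $G$-representations. Indeed, the paper shows (Proposition \ref{image_ss}(i)) that $\mathbf{D}(\pi)$ is an absolutely irreducible four-dimensional $(\varphi,\Gamma)$-module, whereas $\mathbf{D}(\sigma)$ for a $G$-supersingular $\sigma$ is two-dimensional; any filtration of $\pi|_P$ with $G$-supersingular graded pieces would, by exactness of $\mathbf{D}$, force $\mathbf{D}(\pi)$ to have two-dimensional subquotients. The paper's argument is instead completely direct and internal to the metaplectic setup: taking $M=\pi^{I_1}$, one has $\pi=\pi_M[F^{-1}]$ and $\mathbf{D}(\pi)'=\pi_M^{\vee}[1/X]$; the irreducibility of $\pi_M$ as a $P^+$-representation gives $\pi_M^{\vee}=(\mathbf{D}(\pi)')^{\natural}$, and absolute irreducibility of $\mathbf{D}(\pi)$ in dimension $\ge 2$ forces $(\mathbf{D}(\pi)')^{\natural}=(\mathbf{D}(\pi)')^{\sharp}$ (Example \ref{D_nat=D_sharp}(ii)). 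The isomorphism in (i) is then literally the statement $\pi=\varinjlim_F\pi_M$ read dually.

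Your outline for part (ii) follows a genuinely different route from the paper's and has a gap at the end. You produce a supersingular $\sigma$ with $\sigma|_{\wt B}\cong\wt\bOmega(D,\theta)$ and invoke part (i) and Theorem \ref{ss_bij_galois} to get $\sigma\cong\pi$, hence $\wt\bOmega(D,\theta)\cong\wt\bOmega\wt{\mathbf D}(\pi)$; but you then need to conclude $(D,\theta)\cong\wt{\mathbf D}(\pi)$, which amounts to injectivity of $\wt\bOmega$ on these objects and is not addressed. (It can be repaired: by Schur on the absolutely irreducible $D$ one has $\theta=\theta_0\otimes\varepsilon$ for some character $\varepsilon$ of $\wt{\bQ_p^{\times}}/\wt S$, and the $\wt B$-isomorphism forces $\varepsilon=\mathbf 1$ since $\bOmega$ is nonzero and preserves scalars.) The paper bypasses this entirely: it starts from Schur's lemma to write any competing metaplectic structure as $\theta\otimes\varepsilon$ with $\varepsilon\in\{\mu_{-1}^a\omega^{b(p-1)/2}\}$, so that $\wt\bOmega(D,\lambda)\cong\pi\otimes(\mathbf 1\otimes\varepsilon)$, and then shows by an explicit computation with the $H$-eigencharacters $\chi_i$ on $\pi^{I_1}$ and the relations $X^{s_i}F(v_i)=c_iv_{i+1}$ that no nontrivial such twist of $\pi|_{\wt B}$ extends to $\wt G$. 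This hands-on argument avoids both the admissibility/irreducibility analysis of your hypothetical extension $\sigma$ and any appeal to Theorem \ref{ss_bij_galois}.
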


\subsection{Outlook} We hope that this work serves as the starting point of a possible metaplectic $p$-adic local Langlands correspondence for $\GL_2(\bQ_p)$. Building on the computations of mod-$p$ reductions of locally algebraic unramified genuine principal series appearing in \cite{witthaus_meta}, one may write down a speculative semi-simple mod-$p$ metaplectic local Langlands correspondence relating the mod-$p$ automorphic side to metaplectic Galois representations of dimension $2n^2=8$, where $n^2=[L:\bQ_p]$ with $L$ being the compositum of all degree-two extensions of $\bQ_p$. However, real evidence for such a correspondence can only come from the $p$-adic picture. This makes a closer investigation of the relation between the completions of the aforementioned locally algebraic representations and certain (crystalline) Galois representations, as in the work of Breuil-Berger \cite{berger_breuil} for $\GL_2(\bQ_p)$, inevitable.\\

\noindent \textbf{Acknowledgements.}
This work is the second part of the author's PhD thesis and he is grateful to his advisor Vytautas Pa\v{s}k\={u}nas for suggesting the topic at hand and for helpful discussions during the past years. The results in Section \ref{section_Galois_calculate} are an elaborated version of parts of Florian Herzig's lectures \cite{herzig_notes} at the Spring School on the mod-$p$ Langlands Correspondence, which took place in Essen (online) in April 2021, and the author thanks him for outlining a proof of Proposition \ref{herzig_prop} on request and referring to \cite[Proposition 3.2.4.2]{herzig_co}, which simplified and conceptualized a previous version of the relevant part in this
paper. This work was funded by the DFG Graduiertenkolleg 2553.

\subsection{Conventions and notations} Throughout this work, we fix an odd prime number $p$ and a finite field $k$ of characteristic $p$ with an algebraic closure $\bar{k}$. We put $G=\GL_2(\bQ_p)$ and use the standard notations $B\supset T\supset Z$ to denote the subgroup of upper triangular matrices, the subgroup of diagonal matrices and the center of $G$, respectively, and we let $K=\GL_2(\bZ_p)$ be the maximal compact open subgroup. We identify $Z\cong \bQ_p^{\times}$ via the diagonal embedding and let $S\subset \bQ_p^{\times}$ denote the subgroup of squares.

Given a topological group $\mathcal{G}$, the category of smooth $k$-linear representation of $\mathcal{G}$ will be denoted by $\Mod^{\sm}_{\mathcal{G}}(k)$. Here, smooth means that the stabilizer of each vector is open. 

For a closed subgroup $\mathcal{H}\subset \mathcal{G}$, we let $\Ind^{\mathcal{G}}_{\mathcal{H}}(-)\colon \Mod^{\sm}_{\mathcal{H}}(k)\to \Mod^{\sm}_{\mathcal{G}}(k)$ be the smooth induction defining a left adjoint of the restriction functor $(-)|_{\mathcal{H}}$. If $\mathcal{H}$ is in fact open, we similarly denote by $\cInd^{\mathcal{G}}_{\mathcal{H}}(-)$ the compact induction defining a right adjoint of the restriction functor.

Finally, if $\mathcal{G}$ is abelian, we let $\mathcal{G}^{\sq}$ be its subgroup of squares.

	\section{Genuine representation theory}
	
In this section, we remind the reader of the classification of the smooth absolutely irreducible genuine mod-$p$ representations of the two-fold metaplectic cover of $G$ in terms of universal spherical Hecke modules, and then prove a finiteness result, which will play an important role for extending Colmez's functor.
	
\subsection{Recollection}\label{section_recollection} We recollect some facts about the metaplectic extension of $G$, for details we refer the reader to \cite[§2]{witthaus_meta}.
The metaplectic cover of $G$ is the topological central extension
\[
1\to \mu_2=\{\pm 1\} \to \wt{G}\to G\to 1
\]
defined by the $2$-cocycle $\sigma\colon G\times G\to \mu_2$,
\begin{equation}\label{cocycle}	\sigma(g_1,g_2)=\left(\frac{\mathfrak{c}(g_1g_2)}{\mathfrak{c}(g_1)},\frac{\mathfrak{c}(g_1g_2)}{\mathfrak{c}(g_2)}\det(g_1)\right) \text{ for all $(g_1,g_2)\in G\times G$},
\end{equation}
where $\mathfrak{c}\left(\begin{mat}
	a & b\\ c & d
\end{mat}\right)=\begin{cases}
	c \text{ if $c\neq 0$}\\
	d \text{ if $c=0$,}
\end{cases}$ and $(-,-)\colon \bQ_p^{\times}\times \bQ_p^{\times}\twoheadrightarrow \bQ_p^{\times}/S\times \bQ_p^{\times}/S\to \mu_2$ is the quadratic Hilbert symbol, explicitly given by $(a,b)=\omega\left((-1)^{v(a)v(b)} \frac{b^{v(a)}}{a^{v(b)}}\right)^{\frac{p-1}{2}}$ for all $a,b\in \bQ_p^{\times}$, where $\omega$ is the mod-$p$ cyclotomic character.

\begin{notation}
	(a) As a set, $\wt{G}=G\times \mu_2$ and we write elements as pairs $(g,\zeta)$ for $g\in G$ and $\zeta\in \mu_2$.
	
	(b) The preimage of a subset $C\subset G$ in $\wt{G}$ will be denoted by $\wt{C}$.
\end{notation}

The map
\begin{align*}
	K\times \mu_2&\xrightarrow{\cong} \wt{K}\\
	(g,\zeta) &\mapsto \begin{cases}
		(g,\zeta(c,d\det(g)^{-1})) \text{ if $c\in p\bZ_p\setminus \{0\}$}\\
		(g,\zeta) \text{ otherwise}
	\end{cases}
\end{align*}
defines a group isomorphism, which we fix throughout and use to view subgroups of $K$ as subgroups of $\wt{K}$.

As a group, the center $Z(\wt{G})=Z^{\sq}\times \mu_2\cong S\times \mu_2$ is equal to the product of the subgroup of squares in the center $Z$ of $G$ and $\mu_2$.\\

Let $\iota\colon \mu_2\hookrightarrow k^{\times}$ be the non-trivial character. Given a subgroup $C\subset G$, one has the product decomposition
\[
\Mod^{\sm}_{\wt{C}}(k)=\Mod^{\sm}_{\wt{C},\iota}(k)\times \Mod^{\sm}_C(k),
\]
where the first factor is the full subcategory consisting of the \textit{genuine} representations, i.e.\ those on which $\mu_2$ acts via $\iota$, and the second factor is viewed as a full subcategory via inflation.

\begin{notation}
	Given a subgroup $C\subset G$ together with a group isomorphism $\wt{C}\cong C\times \mu_2$, we denote the induced equivalence of categories $\Mod^{\sm}_C(k)\simeq \Mod^{\sm}_{\wt{C},\iota}(k)$ by $\pi\mapsto \pi\boxtimes \iota$.
\end{notation}

We will be interested in $\Mod^{\sm}_{\wt{G},\iota}(k)$, the absolutely irreducible objects of which have been described in \cite{witthaus_meta}: they are either genuine principal series or genuine supersingular representations. By definition, a genuine principal series on a $k$-vector space is of the form $\Ind^{\wt{G}}_{\wt{B}}(\tau)$ for some smooth irreducible genuine $k$-representation $\tau$ of $\wt{T}$, while a genuine supersingular representation is a smooth irreducible $k$-representation of $\wt{G}$ not occurring as a subquotient of, or equivalently being isomorphic to, a principal series.

\begin{prop}\label{PS}
	Let $B_S=\begin{mat}
		\bQ_p^{\times} & \bQ_p\\ 0 & 1
	\end{mat}\supset \begin{mat}
		\bQ_p^{\times} & 0\\ 0 & S
	\end{mat}=T_S$.
	\begin{enumerate}
		\item[{\rm (i)}] As groups, $\wt{B}_S=B_S\times \mu_2\supset T_S\times \mu_2=\wt{T}_S$;
		\item[{\rm (ii)}] The following three sets, considered up to isomorphism, are in bijection:
		\begin{enumerate}
			\item[{\rm (a)}] $\left\{\text{Smooth characters } T^{\sq}\to k^{\times}
			\right\};$
			\item[{\rm (b)}] $\left\{
			\text{Smooth abs.\ irred.\ genuine }
			\text{representations of $\wt{T}$ on $k$-vector spaces}\right\};$
			\item[{\rm (c)}] $\left\{
			\text{Abs.\ irred.\ genuine }
			\text{principal series of $\wt{G}$ on $k$-vector spaces}\right\}.$
		\end{enumerate}
		The bijection from (a) to (b) is given by $\chi\mapsto \Ind^{\wt{T}}_{\wt{T}_S}(\chi'\boxtimes \iota)$ for any extension $\chi'$ of $\chi$ to $T_S$, while the bijection between (b) and (c) is induced by the functor $\Ind^{\wt{G}}_{\wt{B}}(-)$ composed with inflation along $\wt{B}\twoheadrightarrow \wt{T}$.
	\end{enumerate}
\end{prop}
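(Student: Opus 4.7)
The plan is to verify (i) by a direct cocycle computation and then handle (ii) in two halves: the first, (a) $\leftrightarrow$ (b), by a Stone--von Neumann-type argument, and the second, (b) $\leftrightarrow$ (c), by invoking the classification of genuine principal series from \cite{witthaus_meta}.

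For (i), recall that $\mathfrak{c}$ returns the lower-right entry on upper-triangular matrices. For elements of $B_S$ that entry is $1$, so formula \eqref{cocycle} immediately gives $\sigma|_{B_S\times B_S}=1$. For $t_i=\begin{mat} a_i & 0 \\ 0 & d_i \end{mat}\in T_S$, the analogous computation produces $\sigma(t_1,t_2)=(d_2,a_1 d_1^{2})=(d_2,a_1)$, which is trivial because $d_2\in S$ and the Hilbert symbol vanishes on squares. Hence $\wt{B}_S=B_S\times\mu_2$ and $\wt{T}_S=T_S\times\mu_2$.

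For (a) $\leftrightarrow$ (b), I would first compute the commutator pairing of the central extension $1\to\mu_2\to\wt{T}\to T\to 1$; the same sort of computation as in (i) yields $c(t_1,t_2)=(d_1,a_2)(d_2,a_1)$, a pairing that factors through $T/T^{\sq}$ and is non-degenerate there because the Hilbert symbol is. It follows that $Z(\wt{T})=T^{\sq}\times\mu_2$, and that $\wt{T}_S$ is a maximal abelian subgroup of $\wt{T}$: it is abelian by (i), and $T_S/T^{\sq}$ has order $4=\sqrt{[T:T^{\sq}]}$, so it is Lagrangian. Given a character $\chi$ of $T^{\sq}$, any extension $\chi'$ to $T_S$ yields a genuine character $\chi'\boxtimes\iota$ of $\wt{T}_S$, and a standard Stone--von Neumann argument shows that $\Ind^{\wt{T}}_{\wt{T}_S}(\chi'\boxtimes\iota)$ is absolutely irreducible with central character $\chi\boxtimes\iota$. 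Independence of the extension follows because, by non-degeneracy of the pairing, any two extensions of $\chi$ to $T_S$ differ by a character of the form $t\mapsto\iota(c(t_0,t))$ for some $t_0\in T$, i.e.\ they are $\wt{T}$-conjugate. Conversely, any absolutely irreducible genuine representation of $\wt{T}$ is determined by its central character, giving the inverse map.

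For (b) $\leftrightarrow$ (c), essential surjectivity is built into the definition of a genuine principal series, and what remains -- namely that $\Ind^{\wt{G}}_{\wt{B}}(\tau)$ is absolutely irreducible for absolutely irreducible $\tau$, and that $\tau$ can be recovered from the induction (for instance by a Jacquet-module or Frobenius reciprocity argument together with the fact that non-principal-series irreducibles are supersingular) -- should be directly available from \cite{witthaus_meta}, where the classification of irreducibles in $\Mod^{\sm}_{\wt{G},\iota}(k)$ is carried out. The main obstacle in the whole statement is really the book-keeping in (a) $\leftrightarrow$ (b): making the commutator pairing, maximal isotropy, and conjugacy of extensions explicit requires care because $\wt{T}$ is genuinely non-abelian, but once translated into Hilbert-symbol identities everything reduces to a short computation.
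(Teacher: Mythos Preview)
Your proposal is correct and, for part (ii), follows exactly the route the paper takes: the paper's own proof consists entirely of citations to \cite{witthaus_meta} (Lemma 2.4 (ii) for (i), and Remark 4.17, Definition-Proposition 4.16, Proposition 4.20 for (ii)), and your Stone--von Neumann argument for (a)$\leftrightarrow$(b) together with the appeal to the classification in \cite{witthaus_meta} for (b)$\leftrightarrow$(c) is precisely what those references contain. So you have simply unpacked the citations rather than taken a different path.

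One small correction in (i): the statement as printed has a typo --- the lower-right entry of $B_S$ should be $S$, not $1$ (compare the introduction, and note that otherwise $T_S\not\subset B_S$). Your sentence ``For elements of $B_S$ that entry is $1$'' follows the typo. With the correct definition, the cocycle on $B_S$ is computed exactly as you compute it for $T_S$: for $g_i=\begin{mat} a_i & b_i\\ 0 & d_i\end{mat}$ with $d_i\in S$ one gets $\sigma(g_1,g_2)=(d_2,a_1d_1^2)=(d_2,a_1)=1$ since $d_2\in S$. So the argument stands, only the justification for the $B_S$ case needs this one-line adjustment.
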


\begin{proof}
	Part (i) is Lemma 2.4 (ii), while part (ii) is the content of Remark 4.17, Definition-Proposition 4.16 and Proposition 4.20 in \cite{witthaus_meta}.
\end{proof}

The absolutely irreducible genuine principal series representations may also be described in terms of the universal module of the spherical Hecke algebras, which is also suitable to parametrize the supersingular objects: 

For $0\leq r\leq p-1$, consider the irreducible $\GL_2(\bF_p)$-representation $\Sym^r(k^2)$, where $k^2$ denotes the standard representation (base changed to $k$), viewed as a smooth irreducible representation of $K$ via inflation. We extend the action to $KZ^{\sq}$ by letting the central matrix $p^2$ act trivially. Via the identification $\wt{K}Z(\wt{G})\cong KZ^{\sq}\times \mu_2$, we may thus form the smooth irreducible genuine $k$-representation $\Sym^r(k^2)\boxtimes \iota$ of $\wt{K}Z(\wt{G})$. According to \cite[Proposition 5.2]{witthaus_meta}, the spherical Hecke algebra 
\[
\End_{\wt{G}}(\cInd^{\wt{G}}_{\wt{K}Z(\wt{G})}(\Sym^r(k^2)\boxtimes \iota))=k[\wt{T}]
\]
is a polynomial ring in a single Hecke operator $\wt{T}$.

\begin{prop}[{\cite[Theorem 5.13]{witthaus_meta}}]\label{cokernels}
	Let $0\leq r\leq p-1$ and $\lambda\in k$.
	
	\begin{enumerate}
		\item[{\rm (i)}] If $\lambda \neq 0$, then 
		\[
		\cInd^{\wt{G}}_{\wt{K}Z(\wt{G})}(\Sym^r(k^2)\boxtimes \iota)/(\wt{T}-\lambda)\cong \Ind^{\wt{G}}_{\wt{B}_S}(\psi'\boxtimes \iota),
		\]
		where $\psi'\colon T_S\to k^{\times}$ is any extension of the character $\psi\colon T^{\sq}\to k^{\times}$ defined by $\psi|_{K\cap T^{\sq}}=(\mathbf{1}\otimes \omega^r)|_{K\cap T^{\sq}}$, $\psi\left(\begin{mat}
			p^{-2} & 0\\0 & 1
		\end{mat}\right)=\lambda$ and $\psi\left(\begin{mat}
			p^{-2} &0\\0 & p^{-2}
		\end{mat}\right)=1$.
		\item[{\rm (ii)}] If $\lambda = 0$, then $\cInd^{\wt{G}}_{\wt{K}Z(\wt{G})}(\Sym^r(k^2)\boxtimes \iota)/(\wt{T})$ is an extension of two non-isomorphic absolutely irreducible genuine supersingular representations. The extension splits if and only if $r=\frac{p-1}{2}$.
	\end{enumerate}
\end{prop}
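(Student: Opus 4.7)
The plan is to adapt the Barthel-Livn\'{e}-Breuil parametrization strategy to the metaplectic setting. Write $\mathcal{M}_r := \cInd^{\wt{G}}_{\wt{K}Z(\wt{G})}(\Sym^r(k^2)\boxtimes \iota)$ and use Frobenius reciprocity as the main bridge between $\mathcal{M}_r$ and its candidate quotients.

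For part (i), I would begin by restricting $\Ind^{\wt{G}}_{\wt{B}_S}(\psi'\boxtimes \iota)$ to $\wt{K}Z(\wt{G})$ via an Iwasawa-type decomposition $\wt{G} = \wt{B}_S\wt{K}$ lifting $G = B_S K$. A direct check with the specified weight $r$ and central character of $\psi$ produces a distinguished vector $f_{\psi'}$ in the $\Sym^r(k^2)\boxtimes \iota$-isotypic subspace of this restriction, and Frobenius reciprocity then yields a non-zero $\wt{G}$-equivariant map $\Phi\colon \mathcal{M}_r \to \Ind^{\wt{G}}_{\wt{B}_S}(\psi'\boxtimes \iota)$. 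The core computation is the evaluation of the Hecke operator $\wt{T}$ on the generator $[1,v]\in \mathcal{M}_r$, with $v$ a highest weight vector in $\Sym^r(k^2)$: using the sum formula for $\wt{T}$ combined with the explicit cocycle $\sigma$, one shows $\Phi([1,v])$ is a $\lambda$-eigenvector of $\wt{T}$, so $\Phi$ factors through $\mathcal{M}_r/(\wt{T}-\lambda)$. Bijectivity would then follow from a comparison of $\wt{K}Z(\wt{G})$-socle filtrations on both sides together with the irreducibility of the principal series provided by Proposition \ref{PS}.

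For part (ii), the key new phenomenon compared to $\GL_2(\bQ_p)$ is that $Z(\wt{G}) = Z^{\sq}\times \mu_2$ is strictly smaller than the preimage $\wt{Z}$ of $Z$, so elements of $\wt{Z}$ act on $\mathcal{M}_r/\wt{T}$ by $\wt{G}$-automorphisms modulo the scalars coming from $Z(\wt{G})$. I would write down an explicit non-scalar involution coming from multiplication by a lift of a non-square diagonal central element, and use it to decompose $\mathcal{M}_r/\wt{T}$ into (at most) two $\wt{G}$-stable pieces, distinguished by their $\wt{Z}$-central characters. A Jacquet-functor computation rules out that either piece embeds into a principal series: such an embedding would lift to a $\wt{G}$-equivariant map from $\mathcal{M}_r$ to a principal series on which $\wt{T}$ acts by a non-zero scalar by part (i), contradicting $\lambda = 0$. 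Hence both pieces are supersingular, and they are non-isomorphic because they carry distinct central characters on $\wt{Z}$. The split-versus-non-split dichotomy reduces to whether the Weyl-conjugated weight $\Sym^{p-1-r}(k^2)$ agrees with $\Sym^r(k^2)$, which occurs exactly at the fixed point $r = (p-1)/2$ of $r\mapsto p-1-r$.

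The main obstacle is expected to be the explicit calculation of $\wt{T}$ on generators of $\mathcal{M}_r$. This requires carefully tracking the Hilbert-symbol cocycle $\sigma$ through the sum over the coset decomposition defining the Hecke operator and comparing the resulting expression with the classical Barthel-Livn\'{e} Hecke formula. The bookkeeping of signs coming from $(-,-)$ is precisely what pins down $\lambda$ in part (i) and is also what detects the exceptional behaviour at $r = (p-1)/2$ in part (ii); getting these signs right uniformly in $r$ is likely to be the technical heart of the argument.
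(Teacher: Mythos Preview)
The paper does not contain a proof of this proposition: it is quoted verbatim as \cite[Theorem 5.13]{witthaus_meta}, with no argument given here. So there is no proof in the present paper to compare your proposal against.

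That said, a few comments on your sketch. The overall shape for part (i) --- produce a nonzero map $\mathcal{M}_r\to \Ind^{\wt{G}}_{\wt{B}_S}(\psi'\boxtimes\iota)$ by Frobenius reciprocity, check it intertwines $\wt{T}$ with $\lambda$, then appeal to irreducibility of the target --- is the standard Barthel--Livn\'{e} template and is the approach carried out in the cited reference.

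For part (ii) your heuristic is less precise. The group $\wt{Z}$ is not central in $\wt{G}$ (only $Z(\wt{G})=Z^{\sq}\times\mu_2$ is), so a lift $\tilde z$ of a non-square central element does \emph{not} act on $\mathcal{M}_r/\wt{T}$ by a $\wt{G}$-automorphism; by Lemma \ref{Ztilde_action} it gives instead a $\wt{G}$-isomorphism $\mathcal{M}_r/\wt{T}\xrightarrow{\sim}(\mathcal{M}_r/\wt{T})\otimes\chi_{\tilde z}\circ\det$. Hence the two constituents cannot be separated by ``$\wt{Z}$-central characters'' in the way you describe. What actually distinguishes them (cf.\ the Remark after the proposition) is their $\wt{K}$-socle: the two subquotients are $\tilde\pi(r,0)$ and $\tilde\pi(p-1-r,0,\omega^r)$, detected by the appearance of the weights $\Sym^r$ versus $\Sym^{p-1-r}\otimes\det^r$. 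Your final sentence is on the right track --- the split/non-split dichotomy is governed by whether these two weights coincide, which happens exactly when $r=(p-1)/2$ --- but the mechanism producing the decomposition needs to be reworked.
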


\begin{definition}
	For $0\leq r\leq p-1$, $r\neq \frac{p-1}{2}$, $\lambda\in k$ and a smooth character $\eta\colon \bQ_p^{\times}\to k^{\times}$, we define $\tilde{\pi}(r,\lambda,\eta)$ to be the unique irreducible quotient of $\cInd^{\wt{G}}_{\wt{K}Z(\wt{G})}(\Sym^r(k^2)\boxtimes \iota)/(\wt{T}-\lambda)\otimes \eta\circ {\det}$. If $\eta=\mathbf{1}$ is the trivial character, we simply write $\tilde{\pi}(r,\lambda)$.
\end{definition}

\begin{remark}
	The extension in Proposition \ref{cokernels} (ii) is an extension of $\tilde{\pi}(r,0)$ by $\tilde{\pi}(p-1-r,0,\omega^r)$.
\end{remark}

\begin{prop}\label{class_irred}
	Every smooth absolutely irreducible genuine $k$-representation of $\wt{G}$ is of the form $\tilde{\pi}(r,\lambda,\eta)$ for some $0\leq r\leq p-1$, $r\neq \frac{p-1}{2}$, $\lambda\in k$ and a smooth character $\eta\colon \bQ_p^{\times}\to k^{\times}$.
	\begin{enumerate}
		\item[{\rm (i)}] If $\lambda\neq 0$, then $\tilde{\pi}(r,\lambda,\eta)$ is a principal series. Moreover, $\tilde{\pi}(r,\lambda,\eta)\cong \tilde{\pi}(r',\lambda')$ if and only if $r\equiv r' \bmod \frac{p-1}{2}$, $\eta|_{\bZ_p^{\times,\sq}}=\mathbf{1}$, $\eta(p^4)=1$ and $\lambda'=\lambda\eta(p^{-2})$.
		\item[{\rm (ii)}] If $\lambda=0$, then $\tilde{\pi}(r,0,\eta)$ is supersingular. Moreover, $\tilde{\pi}(r,0,\eta)\cong \tilde{\pi}(r',\lambda')$ if and only if $\lambda'=0$, $\eta(p^4)=1$ and either $r'=r$ and $\eta|_{\bZ_p^{\times,\sq}}=\mathbf{1}$ or $r=\begin{cases}
			\frac{p-1}{2}-r'\text{ if $0<r'<\frac{p-1}{2}$}\\
			\frac{3(p-1)}{2}-r' \text{ if $\frac{p-1}{2}<r'<p-1$}
		\end{cases}$ and $\eta|_{\bZ_p^{\times,\sq}} = \omega^{r'}|_{\bZ_p^{\times,\sq}}$.
	\end{enumerate}
\end{prop}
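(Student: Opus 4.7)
The plan is to follow the Barthel--Livn\'e strategy in the metaplectic setting. For existence, let $\pi$ be an absolutely irreducible object of $\Mod^{\sm}_{\wt{G}, \iota}(k)$. Its restriction to $\wt{K}Z(\wt{G}) \cong KZ^{\sq} \times \mu_2$ has a non-trivial irreducible subrepresentation, which after twisting $\pi$ by a suitable $\eta^{-1} \circ \det$ becomes of the form $\Sym^r(k^2) \boxtimes \iota$ with $p^2 \cdot I$ acting trivially, for some $0 \leq r \leq p-1$. Frobenius reciprocity produces a non-zero $\wt{G}$-equivariant map $\cInd^{\wt{G}}_{\wt{K}Z(\wt{G})}(\Sym^r(k^2) \boxtimes \iota) \to \pi \otimes \eta^{-1} \circ \det$, and Schur's lemma applied to the polynomial Hecke algebra $k[\wt{T}]$ forces this map to factor through $(\wt{T} - \lambda)$ for some $\lambda \in k$. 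Proposition \ref{cokernels} then identifies $\pi$ with some $\tilde{\pi}(r, \lambda, \eta)$; if the socle argument initially produces $r = (p-1)/2$, one re-expresses $\pi$ with a different $(r, \eta)$ after the isomorphism statements in (i) and (ii) have been established.

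For part (i), the case $\lambda \neq 0$, Proposition \ref{cokernels}(i) identifies the cokernel with the principal series $\Ind^{\wt{G}}_{\wt{B}_S}(\psi' \boxtimes \iota)$, and twisting by $\eta \circ \det$ modifies $\psi'$ correspondingly. Proposition \ref{PS}(ii) reduces the isomorphism $\tilde{\pi}(r, \lambda, \eta) \cong \tilde{\pi}(r', \lambda')$ to an equality of characters on $T^{\sq} = \bQ_p^{\times, \sq} \times \bQ_p^{\times, \sq}$. Evaluating on the four types of generators $\text{diag}(u_1, 1)$, $\text{diag}(1, u_2)$ for $u_1, u_2 \in \bZ_p^{\times, \sq}$, and $\text{diag}(p^2, 1)$, $\text{diag}(1, p^2)$, and using that $\omega|_{\bZ_p^{\times, \sq}}$ has order $(p-1)/2$, yields the stated conditions directly.

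For part (ii), the case $\lambda = 0$, the irreducible quotient $\tilde{\pi}(r, 0, \eta)$ is unambiguous, and the task is to determine all parameter triples $(r', 0, \eta')$ giving the same representation. The first, obvious source of coincidence is $r' = r$, $\eta|_{\bZ_p^{\times, \sq}} = \mathbf{1}$ and $\eta(p^4) = 1$, obtained by matching $\wt{K}Z(\wt{G})$-socle and central character. The second source arises from the non-split extension in Proposition \ref{cokernels}(ii): its submodule is $\tilde{\pi}(p-1-r, 0, \omega^r)$, and combining this identification with a careful analysis of how the $\wt{K}Z(\wt{G})$-socle decomposes (reducibly, in the metaplectic setting, owing to $Z(\wt{G})$ only seeing squares) yields an additional presentation $\tilde{\pi}(r, 0, \eta) \cong \tilde{\pi}(r_*, 0, \mathbf{1})$ for a specific $r_*$; the formulas $(p-1)/2 - r'$ and $3(p-1)/2 - r'$ encode the unique representative of $r_*$ in each half of $[0, p-1] \setminus \{(p-1)/2\}$. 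A case analysis based on central-character constraints on $Z(\wt{G}) = Z^{\sq} \times \mu_2$ then shows that these two sources exhaust all possible identifications.

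The main obstacle is the analysis in part (ii). The key subtlety is that the $\wt{K}Z(\wt{G})$-socle of a supersingular representation is genuinely reducible --- a phenomenon specific to the metaplectic setting, caused by the central subgroup $Z^{\sq}$ of $\wt{G}$ being strictly smaller than its counterpart in $G$ --- and carefully tracking the $\omega^r$-twist in the submodule of Proposition \ref{cokernels}(ii) is needed to correctly produce the condition $\eta|_{\bZ_p^{\times, \sq}} = \omega^{r'}|_{\bZ_p^{\times, \sq}}$ rather than a naive $\omega^r$. The piecewise $(p-1)/2$-shift in the formula relating $r$ and $r'$ is the hallmark of the metaplectic classification, distinguishing it from the classical Barthel--Livn\'e duality $r \leftrightarrow p-1-r$ and requiring a self-consistent argument to rule out spurious identifications.
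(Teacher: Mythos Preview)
Your strategy is the standard Barthel--Livn\'e approach adapted to the metaplectic setting, and it is essentially what underlies the classification carried out in \cite{witthaus_meta}. The paper's own proof is a one-line citation to Propositions~\ref{cokernels}, \ref{PS} and \cite[Corollary 5.16]{witthaus_meta}, so you are reconstructing the content of that external reference rather than taking a different route.

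Two points in your sketch deserve tightening. First, your handling of the case $r=\frac{p-1}{2}$ is circular as written: you propose to ``re-express $\pi$ with a different $(r,\eta)$ after the isomorphism statements in (i) and (ii) have been established,'' but those statements are only formulated for $r\neq \frac{p-1}{2}$ and therefore cannot be invoked to dispose of the excluded value. The clean fix is to argue directly that each irreducible constituent of $\cInd^{\wt G}_{\wt K Z(\wt G)}(\Sym^{(p-1)/2}(k^2)\boxtimes\iota)/(\wt T-\lambda)$ contains in its $\wt K Z(\wt G)$-socle some weight $\Sym^{r_*}(k^2)\otimes{\det}^{b_*}$ with $r_*\neq \frac{p-1}{2}$, and then restart the Frobenius-reciprocity argument from that weight.

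Second, in part (ii) your explanation of the second family of identifications is slightly off target. The extension in Proposition~\ref{cokernels}(ii) records that $\tilde\pi(r,0)$ and $\tilde\pi(p-1-r,0,\omega^r)$ are the two \emph{distinct} Jordan--H\"older constituents of a single cokernel; it does not by itself produce an isomorphism $\tilde\pi(r,0,\eta)\cong\tilde\pi(r',0)$ with $r'\neq r$. The genuine source of the second identification is, as you also say, the reducibility of the $\wt K Z(\wt G)$-socle of $\tilde\pi(r,0)$: that socle contains a further weight with parameter $r_*$ satisfying $r+r_*\equiv \frac{p-1}{2}\bmod (p-1)$, and running Barthel--Livn\'e from that weight yields the alternative presentation together with the twist condition $\eta|_{\bZ_p^{\times,\sq}}=\omega^{r'}|_{\bZ_p^{\times,\sq}}$. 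Keeping the extension datum and the socle datum separate will make the exhaustion argument in your final sentence go through cleanly.
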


\begin{proof}
	This follows from propositions \ref{cokernels}, \ref{PS} and \cite[Corollary 5.16]{witthaus_meta}.
\end{proof}

\subsection{Some finiteness result}\label{section_finiteness_results} Recall that a smooth representation is admissible if the subspace of $C$-fixed vectors is finite dimensional for every open subgroup $C$. The domain of the extension of Colmez's functor will be the category $\Mod^{\fadm}_{\wt{G}}(k)$ of finitely generated admissible $k$-representations of $\wt{G}$. We need to establish a certain finiteness result for this category in order to define the desired functor. The discussion for the full subcategory $\Mod^{\fadm}_G(k)$ by Emerton \cite{emerton_coherent_rings} will serve as a guideline.\\

By \cite[Proposition 2.2.10]{Emerton_OP1}, the category $\Mod^{\adm}_{\wt{G}}(k)$ of smooth admissible $k$-representations of $\wt{G}$ is abelian.

\begin{lem}
	An admissible object in $\Mod^{\sm}_{\wt{G}}(k)$ is finitely generated if and only if it is of finite length. In particular, the category $\Mod^{\fadm}_{\wt{G}}(k)$ is a Serre subcategory of $\Mod^{\adm}_{\wt{G}}(k)$ and
	hence is abelian.
\end{lem}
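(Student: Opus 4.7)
The plan is to prove the two implications separately, with the converse being the substantive content. The easy direction is immediate: if $\pi$ has finite length, pick a composition series $0=\pi_0\subset \pi_1\subset \cdots \subset \pi_n=\pi$ with absolutely irreducible quotients $\tau_i$; each $\tau_i$ is singly generated, and extensions of finitely generated by finitely generated objects are finitely generated, so $\pi$ is finitely generated. The final sentence of the lemma is also formal once we know that $\Mod^{\fadm}_{\wt G}(k)$ coincides with the class of finite-length objects: this class is closed under subobjects, quotients and extensions in the abelian category $\Mod^{\adm}_{\wt G}(k)$ and thus forms a Serre subcategory, hence itself abelian.

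For the converse I would first use the decomposition $\Mod^{\sm}_{\wt G}(k)=\Mod^{\sm}_{\wt G,\iota}(k)\times \Mod^{\sm}_{G}(k)$ to reduce to the genuine summand, invoking Emerton's analogous result \cite{emerton_coherent_rings} on the non-genuine component. Let $I_1\subset K$ be the pro-$p$ Iwahori, viewed inside $\wt K$ via the fixed splitting (permissible since $p$ is odd, so $I_1\cap \mu_2=1$). The two inputs from Section \ref{section_recollection} that I would use are: (a) $I_1$ is pro-$p$ and $k$ has characteristic $p$, so every nonzero smooth $k$-representation $V$ of $I_1$ satisfies $V^{I_1}\neq 0$; and (b) by inspection of the models of Proposition \ref{cokernels}, every absolutely irreducible genuine object $\tilde\pi(r,\lambda,\eta)$ in Proposition \ref{class_irred} admits a nonzero $I_1$-fixed vector. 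For $\pi$ finitely generated and admissible, $\pi^{I_1}$ is a finite-dimensional $k$-vector space, and the goal is to show $\mathrm{length}(\pi)\leq \dim_k \pi^{I_1}$.

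The core step is to prove that the assignment $\pi'\mapsto (\pi')^{I_1}$ is strictly order-preserving on the lattice of subrepresentations of $\pi$, i.e.\ $\pi_1\subsetneq \pi_2\subset \pi$ implies $\pi_1^{I_1}\subsetneq \pi_2^{I_1}$. Granting this, an arbitrary ascending chain of subrepresentations embeds into the finite poset of subspaces of $\pi^{I_1}$, hence stabilizes; together with the observation that the socle of any nonzero admissible quotient is bounded in length by the dimension of its $I_1$-invariants (each simple summand contributing at least one dimension by (b)), one deduces that the socle filtration of $\pi$ terminates in finitely many steps, each layer of finite length, yielding $\mathrm{length}(\pi)<\infty$.

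The main obstacle is precisely this strict order-preservation. Because $(-)^{I_1}$ is only left-exact, the fact that $(\pi_2/\pi_1)^{I_1}\neq 0$ does not a priori produce an $I_1$-fixed vector in $\pi_2\setminus \pi_1$. To handle this, I would lift an $I_1$-fixed $\bar v\in (\pi_2/\pi_1)^{I_1}$ to some $v\in \pi_2$, form the finite-dimensional $I_1$-stable subspace $V\subset \pi_2$ spanned by $I_1\cdot v$ (finite by smoothness), and then apply the pro-$p$ averaging to $V$ together with the classification of irreducible subquotients of $\pi_2$ via Proposition \ref{class_irred} to produce an actual $I_1$-invariant lift; schematically, one uses that $V/(V\cap \pi_1)$ is a nonzero finite-dimensional quotient of a finite $p$-group representation whose simple constituents all admit $I_1$-fixed vectors, which combined with the local structure of $\pi_2$ forces an $I_1$-fixed vector of $V$ to project nontrivially into $\pi_2/\pi_1$. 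This is where the detailed results recalled in Section \ref{section_recollection} enter substantially, and this step is what rules out pathological admissible representations of infinite length.
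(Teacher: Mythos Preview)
Your argument has a genuine gap at precisely the point you flag as the ``main obstacle''. The claim that $\pi_1\subsetneq\pi_2$ implies $\pi_1^{I_1}\subsetneq\pi_2^{I_1}$ is not established by what you write. First, ``pro-$p$ averaging'' is not available in characteristic $p$: the order of any finite quotient of $I_1$ is a power of $p$ and hence vanishes in $k$, so there is no averaging projector onto invariants. Second, your observation that the simple $I_1$-constituents of $V/(V\cap\pi_1)$ admit $I_1$-fixed vectors is content-free, since every irreducible smooth $k$-representation of a pro-$p$ group is trivial; it gives no leverage for lifting. Third, the appeal to Proposition~\ref{class_irred} and ``the local structure of $\pi_2$'' is a placeholder, not an argument: that proposition classifies irreducible $\wt G$-representations and says nothing about how $I_1$-invariants behave in extensions. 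In fact the desired strictness already fails for $I_1$-representations alone (take $k[\bZ/p\bZ]$ with $I_1$ acting through $\bZ/p\bZ$: a single $I_1$-fixed line sits inside a length-$p$ filtration), so any proof must use the $\wt G$-action in a substantive way, and you have not indicated how.

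The paper's proof avoids this entirely by a different and much shorter route: it reduces to the algebraically closed case, where \cite[Theorem 5.17]{witthaus_meta} already gives finite length for finitely generated admissible objects, and then descends via faithful flatness of $\bar k$ over $k$. Concretely, for any subobject $\tau\subset\pi$ one has that $\tau\otimes_k\bar k\subset\pi\otimes_k\bar k$ is of finite length, hence finitely generated over $\bar k[\wt G]$, and faithful flatness forces $\tau$ to be finitely generated over $k[\wt G]$; a chosen composition series of $\pi$ must then be finite because $-\otimes_k\bar k$ is exact and faithful. No analysis of $I_1$-invariants is needed.
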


\begin{proof}
	We only need to prove that an object of $\Mod^{\fadm}_{\wt{G}}(k)$ is of finite length. By \cite[Theorem 5.17]{witthaus_meta}, this holds true if $k$ is replaced by an algebraic closure $\bar{k}$. Let now $\pi\in \Mod^{\fadm}_{\wt{G}}(k)$. We claim that every suboject $\tau\subset \pi$ is again finitely generated (and admissible). Indeed, $\pi\otimes_k \bar{k}$ is finitely generated and admissible, hence of finite length and so must be the subobject $\tau\otimes_k \bar{k}$, which is therefore finitely generated over the group ring $\bar{k}[\wt{G}]$. By faithful flatness, we deduce that $\tau$ is finitely generated over $k[\wt{G}]$ proving the claim. We may now choose a (possibly infinite) decomposition series of $\pi$, which must however be finite by the result over $\bar{k}$ and faithfulness of the functor $-\otimes_k \bar{k}$, i.e.\ $\pi$ is of finite length.
\end{proof}

Denote by $P=\begin{mat}
	\bQ_p^{\times} & \bQ_p\\ 0 & 1
\end{mat}$ the mirabolic subgroup, which we view as a subgroup of $\wt{G}$ via Proposition \ref{PS} (i). The submonoid $P^+=\begin{mat}
	\bZ_p\setminus \{0\} & \bZ_p\\ 0 &1
\end{mat}$ is generated by the element $F=\begin{mat}
	p & 0\\0 & 1
\end{mat}$ and the submonoids $\Gamma=\begin{mat}
	\bZ_p^{\times} & 0\\0 & 1
\end{mat}$, $\begin{mat}
	1 & \bZ_p\\ 0 & 1
\end{mat}$.

Identifying the completed group ring of $\begin{mat}
	1 & \bZ_p\\ 0 & 1
\end{mat}$ with the power series ring $k\llbracket X\rrbracket$ in the variable $X=\begin{mat}
	1 & 1\\0 & 1
\end{mat}-1$, any smooth $P^+$-representation on a $k$-vector space is naturally a module over the non-commutative ring $k\llbracket X\rrbracket[F,\Gamma]$ defined by the relations $Ff(X)=f(X^p)F$, $\gamma f(X)=f((1+X)^{\gamma}-1) \gamma$ and $F\gamma=\gamma F$ for all $f(X)\in k\llbracket X\rrbracket$ and $\gamma\in \Gamma\cong \bZ_p^{\times}$. 

\begin{lem}
	For $\pi\in \Mod^{\adm}_{\wt{G}}(k)$, the $k[F]$-module $\pi/X\pi$ is torsion, i.e.\ each element in it is killed by a polynomial in $F$ with $k$-coefficients.
\end{lem}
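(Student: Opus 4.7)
The plan is a two-step reduction to absolutely irreducible admissible objects, followed by a case-by-case analysis via the classification in Proposition \ref{class_irred}. First, given $v \in \pi$, the cyclic $\wt{G}$-subrepresentation $\pi_v = k[\wt{G}]\cdot v \subseteq \pi$ is finitely generated and inherits admissibility from $\pi$, so $\pi_v \in \Mod^{\fadm}_{\wt{G}}(k)$ has finite length by the previous lemma. Since $X\pi_v \subseteq X\pi$, any polynomial in $F$ annihilating the image of $v$ in $\pi_v/X\pi_v$ also annihilates its image in $\pi/X\pi$, so I may replace $\pi$ by $\pi_v$ and assume $\pi$ has finite length.

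Next, the functor $\pi \mapsto \pi/X\pi$ is right exact (it is base change along $k\llbracket X\rrbracket \to k$), and the class of $k[F]$-torsion modules is closed under quotients and extensions. A dévissage along a composition series of $\pi$, combined with an innocuous base change from $k$ to $\bar{k}$ (which preserves and detects $k[F]$-torsion thanks to faithful flatness and the Galois-invariance of the annihilator ideal), then reduces the problem to the case where $\pi$ is absolutely irreducible.

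For non-genuine $\pi$, the statement is the corresponding theorem of Emerton for admissible $\GL_2(\bQ_p)$-representations, namely \cite[Theorem 3.4.7]{ord_parts_II}. For genuine $\pi$, Proposition \ref{class_irred} realizes $\pi$ either as a principal series $\Ind^{\wt{G}}_{\wt{B}_S}(\chi_1\otimes\chi_2\boxtimes\iota)$ or as a twist of a supersingular quotient $\cInd^{\wt{G}}_{\wt{K}Z(\wt{G})}(\Sym^r(k^2)\boxtimes\iota)/\wt{T}$. In the principal series case, writing $\pi = \Ind^{\wt{G}}_{\wt{B}}\Ind^{\wt{B}}_{\wt{B}_S}(\chi_1\otimes\chi_2\boxtimes\iota)$ and combining the Bruhat decomposition $G = B \sqcup BwP$ with the splitting $\wt{P} = P \times \mu_2$ of Proposition \ref{PS}~(i) yields an explicit description of $\pi|_{P^+}$ from which the torsion statement follows by direct computation, parallel to the non-metaplectic principal series case. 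In the supersingular case, the Cartan decomposition $\wt{G} = \bigsqcup_{n\geq 0}\wt{K}Z(\wt{G})F^n\wt{K}$ and the known structure of $\Sym^r(k^2)$ as a $K$-representation provide an explicit model of $\pi|_{P^+}$ in which $k[F]$-torsion of $\pi/X\pi$ can be read off after carefully tracking the Hecke relation $\wt{T}=0$.

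I expect the supersingular analysis to be the main obstacle, since unwinding the effect of $\wt{T}=0$ across the pieces of the Cartan decomposition is delicate. Thanks again to the identification $\wt{P} = P \times \mu_2$, however, this calculation is essentially an adaptation of the classical one for $\GL_2(\bQ_p)$-supersingulars and can be modeled on it.
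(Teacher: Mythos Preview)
Your reduction strategy---passing to a finitely generated subrepresentation, then to finite length, then by d\'evissage and scalar extension to absolutely irreducible objects---is exactly the paper's approach. The only substantive difference is in how the irreducible cases are dispatched.

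For the non-genuine case the paper cites \cite[Proposition 4.5]{emerton_coherent_rings} rather than \cite[Theorem 3.4.7]{ord_parts_II}; the former is precisely the statement that $\pi/X\pi$ is $k[F]$-torsion for admissible $\GL_2$-representations over an unramified extension of $\bQ_p$, so that citation is more directly on point.

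For the genuine irreducible cases the paper does not redo any computation: it simply invokes \cite[Corollary 4.32]{witthaus_meta} for principal series and \cite[Corollary 4.56]{witthaus_meta} for supersingulars, where the required $k[F]$-torsion of $\pi/X\pi$ has already been established. Your proposed Bruhat/Cartan analysis would reprove those corollaries from scratch; the principal-series sketch is fine, but the supersingular one is genuinely delicate (you are right to flag it), and carrying it out here would amount to redoing a substantial portion of \cite{witthaus_meta}. Since those results are already available and the present paper builds on them throughout, citing them is both shorter and more in keeping with the paper's dependencies.
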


\begin{proof}
	Writing an admissible representation as the filtered colimit of its finitely generated subrepresentations, we may assume that $\pi\in \Mod^{\fadm}_{\wt{G}}(k)$. Any polynomial in $\bar{k}[F]$ is a factor of a polynomial in $k[F]$, hence we may replace $k$ by $\bar{k}$. By \cite[Theorem 5.17]{witthaus_meta}, an object in $\Mod^{\fadm}_{\wt{G}}(\bar{k})$ is of finite length, and we are reduced to proving the statement for smooth irreducible $\bar{k}$-representation $\pi$ of $\wt{G}$. If $\mu_2$ acts trivially, then this is a special case of \cite[Proposition 4.5]{emerton_coherent_rings}. In the genuine case, this is Corollary 4.56 (supersingular) and Corollary 4.32 (principal series) in \cite{witthaus_meta}.
\end{proof}

\begin{definition}\label{def_pi_M} Given a smooth $k$-representation $\pi$ of $P$ and a $\Gamma$-stable subspace $M\subset \pi$, define $\pi_M\subset \pi$ to be the $k\llbracket X\rrbracket[F]$-submodule of $\pi$ generated by $\pi_0$ (which is in fact a $k\llbracket X\rrbracket[F,\Gamma]$-submodule or equivalently a $P^+$-subrepresentation).
\end{definition}

Utilizing the previous lemma, one proves the following finiteness result just as Emerton \cite[Theorem 4.7]{emerton_coherent_rings} does for $\GL_2$ over an unramified extension of $\bQ_p$.

\begin{prop}\label{prop_finiteness}
	Let $\pi\in \Mod^{\adm}_{\wt{G}}(k)$ and let $M\subset \pi$ be a finite dimensional $\Gamma$-stable subspace. Then $\pi_M$ finitely presented and of finite length over $k\llbracket X\rrbracket[F]$ and the $k$-subspace of $X$-torsion elements $\pi_M[X]$ is finite dimensional.
\end{prop}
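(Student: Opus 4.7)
The plan is to verify the three claims in sequence, following the strategy of Emerton \cite[Theorem 4.7]{emerton_coherent_rings}.

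\emph{Finite presentation.} The ring $R := k\llbracket X\rrbracket[F;\sigma]$, with $\sigma(f(X)) = f(X^p)$, is a skew polynomial ring over the Noetherian ring $k\llbracket X\rrbracket$ and therefore left Noetherian by the Hilbert basis theorem for twisted polynomial rings. Since $\pi_M = R\cdot M$ is finitely generated over $R$ by the finite-dimensional subspace $M$, it is finitely presented.

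\emph{Finite dimensionality of $\pi_M[X]$.} Smoothness identifies $\pi[X]$ with $\pi^{U_1}$, where $U_1 = \begin{mat}1 & \bZ_p \\ 0 & 1\end{mat}$ (the element $\begin{mat}1&1\\0&1\end{mat}$ topologically generates $U_1\cong\bZ_p$); admissibility of $\pi$ then makes $\pi_M[X]\subseteq\pi^{U_1}$ finite dimensional.

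\emph{Finite length.} The crucial input is the preceding lemma: $\pi/X\pi$ is torsion over $k[F]$. Applied to the finite-dimensional image $\bar M\subseteq \pi/X\pi$, this yields a nonzero polynomial $P(F)\in k[F]$ with $P(F)M\subseteq X\pi$. Choosing lifts $w_m\in\pi$ satisfying $P(F)m = Xw_m$ for $m$ in a $k$-basis of $M$, I would enlarge $M$ to a finite-dimensional $\Gamma$-stable subspace $M^{(1)}\supseteq M$ containing the $w_m$'s (finite dimensional by smoothness of the $\Gamma$-action). Iterating this procedure produces an ascending chain $M\subseteq M^{(1)}\subseteq M^{(2)}\subseteq\cdots$ together with a corresponding ascending chain of finitely generated $R$-submodules $\pi_{M^{(i)}}\subseteq \pi$. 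After reducing to the case $\pi\in\Mod^{\fadm}_{\wt G}(k)$ (so that $\pi$ has finite length as a $\wt G$-representation by the preceding lemma), the Noetherianity of $R$, applied within an ambient finitely generated $R$-submodule of $\pi$ containing $\bigcup_i \pi_{M^{(i)}}$, forces this chain to stabilize at some $M^{(N)}$ with $P^{(N)}(F)M^{(N)}\subseteq XM^{(N)}$. Then $N:=\pi_{M^{(N)}}$ satisfies $N/XN$ finite dimensional over $k$: as a $k[F]$-module it is generated by finitely many elements each killed by $P^{(N)}(F)$. Finally, the $X$-adic filtration $N\supseteq XN\supseteq X^2N\supseteq\cdots$ has each successive quotient a quotient of $N/XN$ (via multiplication by $X^i$), hence of uniformly bounded dimension; combining this with finite dimensionality of $N[X]$ and the $X$-power torsion property of $\pi$, one deduces that $N$ has finite length over $R$. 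As $\pi_M\subseteq N$ is an $R$-submodule of a finite-length Noetherian module, $\pi_M$ has finite length too.

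\emph{Main obstacle.} The delicate part is the finite length argument, both in the termination of the iterative enlargement (which requires the reduction to $\pi$ finitely generated over $\wt G$, so that Noetherianity of $R$ can be applied within an ambient finitely generated $R$-module) and in the final extraction of finite length from the finite dimensionality of $N/XN$ and $N[X]$. The non-commutativity encoded by $FX = X^pF$ means that $F$ does not preserve the $X$-torsion subspaces $\pi[X^n]$, so the interaction between the $X$-adic filtration and the $F$-action must be handled carefully; admissibility of $\pi$ is what supplies the uniform finiteness bounds needed to push the argument through.
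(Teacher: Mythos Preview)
Your arguments for finite presentation (Noetherianity of $R=k\llbracket X\rrbracket[F]$) and for $\dim_k \pi_M[X]<\infty$ (admissibility gives $\pi_M[X]\subseteq\pi^{U_1}$ finite) are fine. The finite-length argument, however, has a real gap.

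The problem is the termination of your chain $\pi_M\subseteq\pi_{M^{(1)}}\subseteq\pi_{M^{(2)}}\subseteq\cdots$. You appeal to Noetherianity of $R$ ``within an ambient finitely generated $R$-submodule of $\pi$ containing $\bigcup_i\pi_{M^{(i)}}$'', but you never produce such a module, and the reduction to $\pi\in\Mod^{\fadm}_{\wt G}(k)$ does not supply one: a finitely generated admissible $\wt G$-representation is essentially never finitely generated over $R$. Indeed for $M\in\mathcal G(\pi)$ one has $\pi=\pi_M[F^{-1}]=\bigcup_n\{v:F^nv\in\pi_M\}$, a strictly increasing union of $R$-submodules whenever $\pi$ is infinite-dimensional. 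So there is no Noetherian container in which to run your ascending-chain argument. There is a secondary looseness in the last step as well: for an $X$-torsion module $N$ the filtration $N\supseteq XN\supseteq X^2N\supseteq\cdots$ need not be separated (multiplication by $X$ is surjective on $k((X))/k\llbracket X\rrbracket$, for instance), so uniformly bounding the graded pieces does not by itself give finite length.

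A clean way to finish, using only what you have already established, is to dualise. Since $\pi_M[X]$ is finite-dimensional, the Pontryagin dual $\pi_M^\vee$ is a profinite $k\llbracket X\rrbracket$-module with $\pi_M^\vee/X\pi_M^\vee\cong(\pi_M[X])^\vee$ finite-dimensional, hence finitely generated over $k\llbracket X\rrbracket$ by topological Nakayama. Pontryagin duality gives an order-reversing bijection between $R$-submodules of $\pi_M$ and $F^\vee$-stable $k\llbracket X\rrbracket$-submodules of $\pi_M^\vee$; the latter satisfy ACC since $\pi_M^\vee$ is finitely generated over the Noetherian ring $k\llbracket X\rrbracket$. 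Thus $\pi_M$ is Artinian over $R$, and together with Noetherianity (which you already have) this gives finite length. This bypasses the iterative enlargement and the $X$-adic filtration entirely; the paper itself just defers to Emerton's argument, so one cannot compare further at the level of detail you attempted.
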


\begin{definition}\label{def_G(pi)}
	For $\pi\in \Mod^{\fadm}_{\wt{G}}(k)$, define $\mathcal{G}(\pi)$ to be the directed set of finite
	dimensional $\wt{K}\wt{Z}$-stable subspaces of $\pi$ generating the latter as a $\wt{G}$-representation, endowed
	with the partial order given by inclusion.
\end{definition}

\begin{remark}
	For $\pi\in \Mod^{\fadm}_{\wt{G}}(k)$, the set $\mathcal{G}(\pi)$ is non-empty: a finite number of generators will be fixed by a small congruence subgroup $K_n=1+p^nM_{2\times 2}(\bZ_p)$, so that $\pi$ is generated as a $\wt{G}$-representation by $\pi^{K_n}$, which is finite dimensional, by admissibility, and stable under the action of $\wt{K}\wt{Z}$.
\end{remark}

	\section{Galois representations and $(\varphi,\Gamma)$-modules}
	
	One of the key ingredients for relating the smooth mod-$p$ representation theory of $\wt{G}$ to Galois representations, as we will do in the next section, is Fontaine's equivalence giving a semi-linear algebraic description of Galois representations in terms of $(\varphi,\Gamma)$-modules. The purpose of this section is to remind the reader of this categorical equivalence as well as to recollect the known description of the $(\varphi,\Gamma)$-module corresponding to an absolutely irreducible Galois representation.

\subsection{Notations}	For a field $L$ with separable closure $L^{\operatorname{sep}}$, write $\mathscr{G}_L=\Gal(L^{\operatorname{sep}}/L)$ for its absolute Galois group and denote by $\Mod^{\fin}_{\mathscr{G}_L}(k)$ the category of smooth $\mathscr{G}_L$-representations on finite dimensional $k$-vector spaces.

We fix an algebraic closure $\ol{\bQ}_p$ of $\bQ_p$, and for $n\geq 1$ we let $\bQ_{p^n}$ denote the unique degree-$n$ unramified extension of $\bQ_p$ (inside $\ol{\bQ}_p$). We define
\[
\omega_n\colon \mathscr{G}_{\bQ_{p^n}}\twoheadrightarrow \bF_{p^n}^{\times}, \hspace{0.2cm} g(\sqrt[p^n-1]{-p})=\omega_n(g)\sqrt[p^n-1]{-p} \text{ \phantom{m}for all $g\in \mathscr{G}_{\bQ_{p^n}}$.}
\]
to be Serre's fundamental character of level $n$ corresponding to the uniformizer $-p$ of $\bQ_p$. In case $n=1$, $\omega_1=\omega$ is simply the mod-$p$ cyclotomic character. For $\lambda\in k^{\times}$, let $\mu_{\lambda}\colon \GQp\to k^{\times}$ be the unique unramified character mapping the geometric Frobenius to $\lambda$.

Finally, normalizing local class field theory so that $p$ maps to a geometric Frobenius, we will view smooth characters of $\GQp$ as smooth characters of $\bQ_p^{\times}$.

\subsection{Absolutely irreducible Galois representations} Following Berger \cite{berger_onsome}, we say that an integer $1\leq h\leq p^n-2$ is \textit{primitive} (relative to $n\geq 1$) if $h$ is not divisible by $(p^n-1)/(p^d-1)$ for any $d<n$ dividing $n$. In this case, Mackey's decomposition implies that the Galois representation $\Ind^{\mathscr{G}_{\bQ_p}}_{\mathscr{G}_{\bQ_{p^n}}}(\omega_n^h)$ is absolutely irreducible. Note that it is not only defined over $\bF_{p^n}$ but already over $\bF_p$. 

\begin{prop}[{\cite[Corollary 2.1.5]{berger_onsome}}]\label{irred_Galois}
	Every $n$-dimensional absolutely irreducible object in $\Mod^{\fin}_{\GQp}(k)$ is isomorphic to $\Ind^{\mathscr{G}_{\bQ_p}}_{\mathscr{G}_{\bQ_{p^n}}}(\omega_n^h)\otimes \mu_{\lambda}$ for some primitive $1\leq h\leq p^n-2$ and $\lambda\in \bar{k}^{\times}$ with $\lambda^n\in k^{\times}$.
\end{prop}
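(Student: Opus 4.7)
The plan is to analyze $\rho$ by restricting to successively smaller subgroups of $\GQp$: first killing wild inertia, then diagonalizing the tame inertia, and finally extracting an unramified twist. I will extend scalars to $\bar{k}$, identifying $\bar{\rho} := \rho \otimes_k \bar{k}$ with $\Ind^{\GQp}_{\mathscr{G}_{\bQ_{p^n}}}(\omega_n^h) \otimes \mu_\lambda$ for some primitive $h$ and some $\lambda \in \bar{k}^{\times}$, and then extract the condition $\lambda^n \in k^{\times}$ from $\Gal(\bar{k}/k)$-invariance of the isomorphism class.

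For the wild inertia subgroup $W \subset \GQp$, the pro-$p$-ness of $W$ together with the characteristic-$p$ hypothesis forces $\bar{\rho}^{W}$ to be nonzero by the standard orbit-counting argument. Normality of $W$ in $\GQp$ and absolute irreducibility of $\bar{\rho}$ then yield $\bar{\rho}^{W} = \bar{\rho}$, so $\bar\rho$ factors through the tame quotient. The tame inertia $I^{\mathrm{t}}$ has pro-order coprime to $p$, hence $\bar{\rho}|_{I^{\mathrm{t}}}$ factors through a finite abelian quotient of order coprime to $p$, is semisimple by Maschke's theorem, and splits as a direct sum of characters $\chi_1,\dots,\chi_n \colon I^{\mathrm{t}} \to \bar{k}^{\times}$. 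A geometric Frobenius lift conjugates $I^{\mathrm{t}}$ by $\gamma \mapsto \gamma^p$, hence permutes the $\chi_i$ via $\chi \mapsto \chi^p$. Absolute irreducibility of $\bar{\rho}$ forbids any proper Frobenius-stable subset of $\{\chi_i\}$, so the Frobenius orbit of each $\chi_i$ has length exactly $n$.

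Writing this orbit as $\{\chi, \chi^p, \dots, \chi^{p^{n-1}}\}$, the conditions $\chi^{p^n} = \chi$ and $\chi^{p^d} \neq \chi$ for any proper divisor $d \mid n$ identify $\chi$ with $\omega_n^h$ for some primitive $h$ via Serre's description of tame inertia (using the uniformizer $-p$ fixed at the start of Section 3). Since the characters $\omega_n^{hp^i}$ are pairwise distinct, each $1$-dimensional isotypic component of $\bar\rho|_{I^{\mathrm{t}}}$ is stable under $\mathscr{G}_{\bQ_{p^n}}$ (because $\mathrm{Frob}_{\bQ_{p^n}}$ acts on characters by $\chi \mapsto \chi^{p^n} = \chi$), giving $\bar\rho|_{\mathscr{G}_{\bQ_{p^n}}} = \bigoplus_i \tilde{\chi}_i$ with $\tilde{\chi}_i|_{I^{\mathrm{t}}} = \omega_n^{hp^i}$. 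Picking the summand $\tilde{\chi}_0$ lifting $\omega_n^h$, it factors as $\omega_n^h \cdot \nu$ for an unramified character $\nu$ of $\mathscr{G}_{\bQ_{p^n}}$, and choosing $\lambda \in \bar{k}^{\times}$ with $\lambda^n = \nu(\mathrm{Frob}_{\bQ_{p^n}})$ gives $\nu = \mu_\lambda|_{\mathscr{G}_{\bQ_{p^n}}}$. Frobenius reciprocity then produces a nonzero map $\Ind^{\GQp}_{\mathscr{G}_{\bQ_{p^n}}}(\tilde{\chi}_0) \to \bar{\rho}$, which is an isomorphism by irreducibility and dimension count; the projection formula yields $\bar{\rho} \cong \Ind^{\GQp}_{\mathscr{G}_{\bQ_{p^n}}}(\omega_n^h) \otimes \mu_\lambda$.

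For the descent, any $\sigma \in \Gal(\bar{k}/k)$ acts on coefficients by a power of the $|k|$-th-power Frobenius and fixes the isomorphism class of $\bar{\rho}$; unwinding this gives $\Ind(\omega_n^{hp^m}) \otimes \mu_{\sigma(\lambda)} \cong \Ind(\omega_n^h) \otimes \mu_\lambda$ for the appropriate integer $m$. Rewriting via the projection formula as an equality of inner characters of $\mathscr{G}_{\bQ_{p^n}}$ up to Frobenius conjugation, the primitivity of $h$ eliminates the ramified contributions and forces $\sigma(\lambda)^n = \lambda^n$; thus $\lambda^n \in k^{\times}$. The main obstacle I anticipate is the step identifying the abstract tame inertia character $\chi$ with an explicit power of $\omega_n$: this depends on Serre's description of tame inertia via the chosen uniformizer and on carefully matching the orbit-length condition with primitivity of the exponent.
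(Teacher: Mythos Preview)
The paper does not supply its own proof of this proposition; it simply cites \cite[Corollary 2.1.5]{berger_onsome} and moves on. Your argument is the standard one underlying that reference: kill wild inertia, diagonalize tame inertia, read off a single Frobenius orbit of length $n$ among the eigencharacters, identify it with $\{\omega_n^{hp^i}\}_i$ for primitive $h$ via Serre's description of tame inertia, extract the unramified twist, and conclude by Frobenius reciprocity. All of this is correct.

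One small simplification for the descent step: rather than tracking the Galois action of $\Gal(\bar{k}/k)$ on the parameters and invoking primitivity again, you can just take determinants. The representation $\Ind^{\GQp}_{\mathscr{G}_{\bQ_{p^n}}}(\omega_n^h)$ is defined over $\bF_p$ (the paper notes this explicitly just before the proposition), so $\det\bigl(\Ind^{\GQp}_{\mathscr{G}_{\bQ_{p^n}}}(\omega_n^h)\bigr)$ is an $\bF_p^{\times}$-valued character; since $\det(\bar{\rho}) = \det(\rho)\otimes_k\bar{k}$ is $k^{\times}$-valued, the identity
\[
\det(\bar{\rho}) = \det\bigl(\Ind^{\GQp}_{\mathscr{G}_{\bQ_{p^n}}}(\omega_n^h)\bigr)\cdot \mu_{\lambda}^{\,n}
\]
forces $\mu_{\lambda^n}$ to take values in $k^{\times}$, i.e.\ $\lambda^n\in k^{\times}$. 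This avoids the slightly vague phrase ``primitivity of $h$ eliminates the ramified contributions'' in your write-up.
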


\begin{remark}
	For a field extension $\ell/k$ and $V_1,V_2\in \Mod^{\fin}_{\GQp}(k)$, one has $\Hom_{k[\GQp]}(V_1,V_2)\otimes_k \ell=\Hom_{\ell[\GQp]}(V_1\otimes_k \ell,V_2\otimes_k \ell)$. In particular, there is no harm in extending scalars to a finite extension of $k$ when dealing with absolutely irreducible Galois representations.
\end{remark}

\subsection{Fontaine's equivalence}\label{section_Fontaines_equiv} Let $\bQ_p(\mu_{p^{\infty}})$ be the subfield of $\ol{\bQ}_p$ obtained by adjoining all $p$-power roots of unity to $\bQ_p$. Consider the short exact sequence
\[
0\to \mathscr{G}_{\bQ_p(\mu_{p^{\infty}})}\to \mathscr{G}_{\bQ_p}\to \Gamma:=\Gal(\bQ_p(\mu_{p^{\infty}})/\bQ_p)\to 0.
\]
The cyclotomic character $\chi_{\operatorname{cyc}}\colon \GQp\twoheadrightarrow \bZ_p^{\times}$ induces an isomorphism of topological groups $\chi_{\operatorname{cyc}}\colon \Gamma=\Gal(\bQ_p(\mu_{p^{\infty}})/\bQ_p)\cong \bZ_p^{\times}$. For the kernel of the sequence, note that completing $\bQ_p(\mu_{p^{\infty}})$ $p$-adically results in a perfectoid field $\bQ_p(\mu_{p^{\infty}})^{\wedge}$, whose tilt (as defined in \cite[Lemma 3.4 (ii)]{scholze}) is computed to be
\begin{equation*}
	\left(\bQ_p(\mu_{p^{\infty}})^{\wedge}\right)^{\flat}=\left(\varprojlim_{x\mapsto x^p} \bZ_p[\mu_{p^{\infty}}]/p\right)[1/X] \cong \bF_p\llbracket X^{1/p^{\infty}}\rrbracket [1/X]=\left(\bF_p((X))^{\operatorname{perf}}\right)^{\wedge},
\end{equation*}
where $X=(\zeta_{p^n}- 1)_{n\geq 1}$, for a compatible system $\{\zeta_{p^n}\}_{n\geq 1}$ of $p$-power roots of unity. From now on we fix the choice of a compatible system of $p$-power roots and thus the above isomorphism. By the tilting equivalence \cite[Theorem 3.7]{scholze}, we have identifications
\begin{equation*}
	\mathscr{G}_{\bQ_p(\mu_{p^{\infty}})}\cong \mathscr{G}_{\bQ_p(\mu_{p^{\infty}})^{\wedge}}\cong \mathscr{G}_{\bF_p((X^{1/p^{\infty}}))}\cong \mathscr{G}_{\bF_p((X))^{\operatorname{perf}}}\cong \mathscr{G}_{\bF_p((X))},
\end{equation*}
where the first and second-to-last isomorphism are induced by taking completions (Krasner's Lemma).

Via functoriality of tilting and continuity, the Galois group $\mathscr{G}_{\bQ_p}$ acts on $(\ol{\bQ}_p^{\wedge})^{\flat}$ and it preserves the subfield $\bF_p((X))$: $g(X)=(X+1)^{\chi_{\operatorname{cyc}}(g)}-1$, for $g\in \GQp$.

\begin{remark}
	In particular, there is no (serious) clash of notation for $\Gamma$ and $X$ appearing in this section and Section \ref{section_finiteness_results}: in both cases $\Gamma$ is identified with $\bZ_p^{\times}$ and the two actions on $X$ agree. In fact, this compatibility is what allowed Colmez to define his functor as we will see below.
\end{remark}

Since the completed algebraic closure is algebraically closed, we obtain that $\bF_p((X))^{\operatorname{sep}}\subset (\ol{\bQ}_p^{\wedge})^{\flat}$, which is then also $\mathscr{G}_{\bQ_p}$-stable. We will let $\varphi$ denote the Frobenius endomorphism $a\mapsto a^p$ on $\bF_p((X))^{\operatorname{sep}}$ and $\bF_p((X))$. In particular, we have the commuting actions of $\varphi$ and $\Gamma$ on $\bF_p((X))$. 

Applying the functor $k\otimes_{\bF_p} -$, we obtain similar actions of $(\varphi,\Gamma)$ on $k((X))$.

\begin{definition}[{\cite[3.3]{Fontaine}}]\label{def_phi_gamma} An \textit{\'{e}tale $(\varphi,\Gamma)$-module over $k((X))$} is a finite dimensional $k((X))$-module $D$ endowed with a $\varphi$-semilinear endomorphism $\varphi_D$ and a continuous\footnote{After fixing a basis of $D$, we can identify $D\cong k((X))^n$ for some $n\geq 0$ and we take the topology for which the $k\llbracket X\rrbracket$-submodule $k\llbracket X\rrbracket^n$ endowed with the $X$-adic topology is an open subgroup.} semilinear $\Gamma$-action commuting with $\varphi_D$, such that the linearized map $\varphi_D\colon \varphi^{\ast}D\to D$ is an isomorphism.
\end{definition}

We denote the resulting abelian category by $\phigamma$. It admits tensor products $-\otimes_{k((X))}-$ and duals $(-)'$: while the former is formed factorwise, the latter is uniquely characterized by $D'=\Hom_{k((X))}(D,k((X)))$ on underlying $k((X))$-vector spaces and requiring the evaluation map $D'\otimes_{k((X))} D\to  k((X))$ to be $(\varphi,\Gamma)$-equivariant.

\begin{def-prop}[{\cite[3.4.3]{Fontaine}}]\label{fontaine_equiv} The functors
	\begin{align*}
		\Phi\Gamma^{\operatorname{\acute{e}t}}_{k((X))} &\rightleftarrows \Mod^{\fin}_{\mathscr{G}_{\bQ_p}}(k)\\
		D&\mapsto \mathscr{V}(D)=(D\otimes_{\bF_p((X))} \bF_p((X))^{\operatorname{sep}})^{\varphi_D\otimes \varphi=1}\\
		\mathscr{D}(V)=(V\otimes_{\bF_p} \bF_p((X))^{\operatorname{sep}})^{\mathscr{G}_{\bQ_p(\mu_{p^{\infty}})}} &\mapsfrom V
	\end{align*}
	define an equivalence of categories respecting tensor products and duals. Here, the $\mathscr{G}_{\bQ_p}$-action on $\mathscr{V}(D)$ is induced by the diagonal action\footnote{$\mathscr{G}_{\bQ_p}$ acts on $D$ through the quotient $\Gamma$ and on $\bF_p((X))^{\operatorname{sep}}$ as explained prior to Definition \ref{def_phi_gamma}.}. In the definition of $\mathscr{D}(V)$, the $\mathscr{G}_{\bQ_p(\mu_{p^{\infty}})}\cong \mathscr{G}_{\bF_p((X))}$-invariants are taken with respect to the diagonal action, so one obtains a natural action of the quotient $\Gamma$; the $\varphi$-action on $\mathscr{D}(V)$ is induced by acting on $\bF_p((X))^{\operatorname{sep}}$. The action of $k$ on either side is induced by the action of it on $D$ and $V$, respectively.
\end{def-prop}

\begin{example}[{\cite[Corollaire 3.7]{Breuil_diagram}}]\label{example_phi_gamma}
	Fix integers $n\geq 1$, $h\geq 0$ and let $\chi\colon \bQ_p^{\times}\to \bar{k}^{\times}$ be a character with $\chi(p)^n\in k^{\times}$, so that the Galois representation $V=\Ind^{G_{\bQ_p}}_{G_{\bQ_{p^n}}}(\omega_n^h)\otimes \chi$ is defined over $k$. The $(\varphi,\Gamma)$-module $\mathscr{D}(V)$ attached to $V$ is the $n$-dimensional vector space over $k((X))$ with basis $e_1,\ldots,e_n$ and
	\begin{align*}
		&\varphi(e_i)=e_{i+1} \text{ for $1\leq i \leq n-1$ and } \varphi(e_n)= \chi(p)^n X^{-h(p-1)}e_1\\
		&\gamma(e_i)=\chi(\gamma) \left(\frac{\omega(\gamma)X}{\gamma(X)}\right)^{h\frac{p^{i-1}(p-1)}{p^{n}-1}} e_i \text{ for all $1\leq i\leq n$.}
	\end{align*}
	In case $1\leq h\leq p^n-2$ is primitive relative to $n$ (so that $V$ is absolutely irreducible), this was also computed by Berger \cite[Theorem 2.1.6]{berger_onsome}.
\end{example}

\subsection{$(\psi,\Gamma)$-modules}\label{section_psi-gamma} We finish this section with a brief discussion of $(\psi,\Gamma)$-modules following \cite[II.]{colmez1}.

The field $k((X))$ has basis $\{(X+1)^{i}\}_{i=0,\ldots,p-1}$ over its subfield $k((X^p))=\varphi(k((X)))$. Thus, if $D$ is an \'{e}tale $(\varphi,\Gamma)$-module over $k((X))$, then every element $v\in D$ is uniquely of the form $v=\sum_{i=0}^{p-1} (X+1)^{i}\varphi(v_i)$ for some elements $v_i\in D$.

\begin{definition}[{\cite[II.3]{colmez1}}]
	Let $D\in \phigamma$. For $v=\sum_{i=0}^{p-1}(X+1)^{i}\varphi(v_i)\in D$ with $v_i\in D$ for $i=0,\ldots,p-1$, we define $\psi(v)=v_0$, resulting in an additive map $\psi\colon D\to D$ commuting with the $\Gamma$-action and satisfying $\psi\circ \varphi=\operatorname{id}_D$, $\psi(f(X)\varphi(v))=\psi(f(X)) v$ and $\psi(\varphi(f(X))v)=f(X)\psi(v)$ for all $f(X)\in k((X))$ and $v\in D$.
\end{definition}

The operator $\psi$ is functorial in the sense that a morphism $D_1\to D_2$ in $\phigamma$ is $\psi$-equivariant.

\begin{def-lem}[{\cite[Proposition II.4.2, Corollaire II.5.12]{colmez1}}] Let $D\in \phigamma$.
	\begin{enumerate}
		\item[{\rm (i)}] There exists a unique lattice\footnote{This means a finitely generated $k\llbracket X\rrbracket$-stable subspace containing a basis of $D$ over $k((X))$.} $D^{\sharp}\subset D$, which is stable under the action of $\psi$ such that $\psi$ is surjective on it and for each $v\in D$, there is some $n\geq 1$ so that $\psi^n(v)\in D^{\sharp}$.
		
		\noindent Moreover, if $N\subset D$ is any $\psi$-stable lattice on which $\psi$ is surjective, then $N\subset D^{\sharp}\subset X^{-1}N$.
		\item[{\rm (ii)}] There exists a unique minimal $\psi$-stable lattice $D^{\natural}$ of $D$. Moreover, $\psi$ is surjective on it.
	\end{enumerate}
\end{def-lem}

\begin{remark}[{\cite[Proposition II.4.6, Proposition II.5.17]{colmez1}}]\label{sharp_functorial}
	Note that the definitions of $\psi$, $D^{\sharp}$ and $D^{\natural}$ do not use the $\Gamma$-action, i.e.\ they are already defined for \'{e}tale $\varphi$-modules and on this category the assignments $D\mapsto D^{\sharp}$ and $D\mapsto D^{\natural}$ are functorial. In particular, if $D$ is a $(\varphi,\Gamma)$-module over $k((X))$, then the subspaces $D^{\natural}$ and $D^{\sharp}$ are $\Gamma$-stable (since the $\varphi$-action commutes with $\Gamma$). The resulting objects are instances of $(\psi,\Gamma)$-modules over $k\llbracket X\rrbracket$, the definition of which the reader can guess but we will not need.
\end{remark}

\begin{example}[{\cite[Lemme 1.1.2]{berger_french}}]\label{D_nat=D_sharp}
	\begin{enumerate}
	\item[{\rm (i)}] Let $D=k((X))e$ be the $(\varphi,\Gamma)$-module attached to a smooth character $\chi\colon \bQ_p^{\times}\to k^{\times}$ in the sense that $\varphi(e)=\chi(p)e$ and $\gamma(e)=\chi(\gamma)e$ for all $\gamma\in \Gamma\cong \bZ_p^{\times}$, i.e.\ $D=\mathscr{D}(\chi)$ via Fontaine's equivalence \ref{fontaine_equiv} (and local class field theory). Then $D^{\natural}=k\llbracket X\rrbracket e\subset X^{-1}k\llbracket X\rrbracket e = D^{\sharp}$.
	
	\item[{\rm (ii)}] Let $n\geq 2$ be an integer. If $V$ is an $n$-dimensional absolutely irreducible Galois representation of $\mathscr{G}_{\bQ_p}$ on a $k$-vector space, then $\mathscr{D}(V)^{\natural}=\mathscr{D}(V)^{\sharp} \subset \mathscr{D}(V)$. In fact, any non-trivial $\psi$-stable $k\llbracket X\rrbracket$-submodule of $\mathscr{D}(V)^{\sharp}$ must already be equal to $\mathscr{D}(V)^{\sharp}$.
	\end{enumerate}
\end{example}

\section{An extension of Colmez's functor} 
In \cite{colmez2}, Colmez defines a functor from the category of finitely generated smooth admissible $k$-representations of $G$ to the category of Galois representations of $\GQp$ on $k$-vector spaces. This functor has the property that the image of an object only depends on the restriction of that object to the mirabolic subgroup $P$. More formally, using Breuil's generalization \cite{Breuil_phi}, one may view Colmez's functor as defined on the category of smooth $P$-representations at the cost of replacing the target category of Galois representations by its ind-category. When composed with the restriction functor $\Mod^{\fadm}_{G}(k)\to \Mod^{\sm}_{P}(k)$, this more general functor takes values in the subcategory of compact objects, i.e.\ usual Galois representations, and recovers Colmez's functor. By Proposition \ref{PS} (i), we have an equality of groups $\wt{P}=P\times \mu_2$, so we may also precompose with the restriction functor $\Mod^{\fadm}_{\wt{G}}(k)\to \Mod^{\sm}_P(k)$ and the finiteness results of Section \ref{section_finiteness_results} will imply that the composite again takes values in the subcategory of compact objects, thus extending Colmez's functor to the whole category $\Mod^{\fadm}_{\wt{G}}(k)$.

\subsection{From smooth $P$-representations to Galois representations} We recollect the definition and basic properties of Breuil's functor, as found in \cite[§2]{Breuil_phi}, from the category of smooth $k$-representations of $P$ to the ind-category of Galois representations of $\mathscr{G}_{\bQ_p}$ on $k$-vector spaces or equivalently of $(\varphi,\Gamma)$-modules over $k((X))$. 

\begin{definition}[{\cite[p.\ 8]{Breuil_phi}}]\label{def_M(pi)} Let $\pi$ be a smooth $k$-representation of $P$. We let $\mathcal{M}(\pi)$ denote the set of finitely generated $k\llbracket X\rrbracket[F,\Gamma]$-submodules $\tau$ of $\pi$ such that the space $\tau[X]$ of $X$-torsion elements is finite dimensional. We turn $\mathcal{M}(\pi)$ into a filtered set by giving it the partial order induced by inclusion (cf.\ \cite[Lemme 2.1]{Breuil_phi}).
\end{definition}

Although not necessary for constructing the Galois object associated to a smooth representation of $P$, the following observation seems to fit here just fine: As in \cite[Definition 2.3.1]{Emerton_OP1}, we say that $\pi\in \Mod^{\sm}_{\wt{B}}(k)$ is \textit{locally $Z(\wt{G})$-finite} if for each $v\in \pi$, the action map $k[Z(\wt{G})]\to \End_k(k[Z(\wt{G})]v)$ of $Z(\wt{G})$ on the $Z(\wt{G})$-subrepresentation of $\pi$ generated by $v$ has finite dimensional image. Denote the full abelian subcategory of $\Mod^{\sm}_{\wt{B}}(k)$ consisting of locally $Z(\wt{G})$-finite representations by $\Mod^{\sm}_{\wt{B}}(k)_{Z(\wt{G})-\mathrm{fin}}$.

\begin{lem}\label{cofinal_Z_stable}
	For $\pi\in \Mod^{\sm}_{\wt{B}}(k)_{Z(\wt{G})-\mathrm{fin}}$, the subset of $\mathcal{M}(\pi)$ consisting of those elements which are stable under the action of $\wt{Z}$ is cofinal.
\end{lem}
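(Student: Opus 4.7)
The plan is to produce, for a given $\tau\in \mathcal{M}(\pi)$ with a finite generating set $v_1,\dots,v_n$ over $k\llbracket X\rrbracket[F,\Gamma]$, a $\wt{Z}$-stable enlargement $\tau' \supset \tau$ still lying in $\mathcal{M}(\pi)$.

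First I upgrade the local $Z(\wt{G})$-finiteness hypothesis to local $\wt{Z}$-finiteness: the quotient $\wt{Z}/Z(\wt{G}) \cong Z/Z^{\sq} \cong \bQ_p^{\times}/S$ has order $4$, so each $\wt{Z}$-orbit in $\pi$ is a finite union of translates of $Z(\wt{G})$-orbits, whose spans are finite-dimensional by hypothesis. Consequently $M:=\wt{Z}\cdot \{v_1,\dots,v_n\} \subset \pi$ is a finite-dimensional, $\wt{Z}$-stable subspace, spanned by $\{\tilde{z}v_i\noloc \tilde{z}\in S,\ i=1,\dots,n\}$ for some finite subset $S\subset \wt{Z}$.

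Now I set $\tau':=k\llbracket X\rrbracket[F,\Gamma]\cdot M$. The key calculation is that conjugation by any $\tilde{z}\in \wt{Z}$ preserves the subalgebra $k\llbracket X\rrbracket[F,\Gamma]$ of the endomorphism ring of $\pi$: indeed, by the formula $\tilde{z}g\tilde{z}^{-1}=g\cdot \chi_{\tilde{z}}(\det g)$ recalled in the introduction, since $\chi_{\tilde{z}}$ is $\mu_2$-valued, conjugation by $\tilde{z}$ fixes $X$ (which arises from the unipotent subgroup, whose elements have determinant $1\in S$) and scales $F$ and each $\gamma\in \Gamma$ by the signs $\chi_{\tilde{z}}(p)$ and $\chi_{\tilde{z}}(\gamma)$, respectively. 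Therefore $\tilde{z}\cdot (k\llbracket X\rrbracket[F,\Gamma]\cdot M) = k\llbracket X\rrbracket[F,\Gamma]\cdot (\tilde{z}M) = \tau'$, so $\tau'$ is both $\wt{Z}$-stable and $k\llbracket X\rrbracket[F,\Gamma]$-stable, and is finitely generated over the algebra by any $k$-basis of $M$.

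It remains to prove $\tau'[X]$ is finite-dimensional. Using the same conjugation computation, I identify $\tau' = \sum_{\tilde{z}\in S} \tilde{z}\tau$ as a $k$-subspace of $\pi$. Each summand $\tilde{z}\tau$ itself lies in $\mathcal{M}(\pi)$: it is $k\llbracket X\rrbracket[F,\Gamma]$-stable by the above, is generated by $\tilde{z}v_1,\dots,\tilde{z}v_n$, and $(\tilde{z}\tau)[X]=\tilde{z}(\tau[X])$ is finite-dimensional since $\tilde{z}$ commutes with $X$. By the filteredness of $\mathcal{M}(\pi)$ provided by \cite[Lemme~2.1]{Breuil_phi}, some $\tau^*\in\mathcal{M}(\pi)$ contains each $\tilde{z}\tau$ with $\tilde{z}\in S$, and therefore contains $\tau'$; monotonicity of $X$-torsion then gives $\tau'[X]\subset \tau^*[X]$ finite-dimensional, whence $\tau' \in \mathcal{M}(\pi)$.

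The main subtlety I anticipate is the finite-dimensionality of $\tau'[X]$: taking $X$-torsion does not commute with finite sums of submodules, so a direct computation from the individual $\tilde{z}\tau$'s is not available, and the argument instead routes through Breuil's filteredness lemma together with the monotonicity of $X$-torsion under inclusion.
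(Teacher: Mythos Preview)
Your argument is correct and is essentially the paper's own proof, spelled out in more detail: both enlarge $\tau$ to the finite sum $\sum_i z_i\tau$ of $\wt Z$-translates (using that $[\wt Z:Z(\wt G)]<\infty$ and local $Z(\wt G)$-finiteness), check that each translate lies in $\mathcal M(\pi)$ via the conjugation formula, and then appeal to \cite[Lemme~2.1]{Breuil_phi} to handle the finite sum. One notational caution: your symbol $S$ for the finite subset of $\wt Z$ collides with the paper's standing notation $S=(\bQ_p^{\times})^{\sq}$.
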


\begin{proof}
	An element $\tau\in \mathcal{M}(\pi)$ is finitely generated over $k\llbracket X\rrbracket[F]$. Since $Z(\wt{G})\subset \wt{Z}$ has finite index and $\pi$ is locally $Z(\wt{G})$-finite, there exist $z_1,\ldots,z_n\in \wt{Z}$ such that $\sum_{z\in \wt{Z}} z\tau=\sum_{i=1}^n z_i\tau$. Each of the summands lies in $\mathcal{M}(\pi)$ and, by \cite[Lemme 2.1]{Breuil_phi}, so does the finite sum which contains $\tau$, so we are done.
\end{proof}

As in Section \ref{section_Fontaines_equiv}, we let $\varphi=\operatorname{id}_k\otimes (a\mapsto a^p)$ be the relative Frobenius endomorphism on $k\otimes_{\bF_p}\bF_p((X))=k((X))$, and we consider the $\Gamma$-action on it defined by $\gamma f(X)=f((X+1)^{\chi_{\operatorname{cyc}}(\gamma)}-1)$ for all $f(X)\in k((X))$ and $\gamma\in \Gamma$. Note that the subring $k\llbracket X\rrbracket\subset k((X))$ is stable under the action of $\varphi$ and $\Gamma$.

Let now $\pi$ be a smooth $k$-representation of $P$ and consider an element $\tau\in \mathcal{M}(\pi)$. Linearizing the action of $F$ on $\tau$, we obtain the $k\llbracket X\rrbracket$-linear map
\begin{equation}\label{linearized}
	F\colon \varphi^{\ast}\tau\to \tau.
\end{equation}
We wish to take the Pontryagin dual $(-)^{\vee}=\Hom_{k}(-,k)$ of it: We turn $\tau^{\vee}$ into a profinite $k\llbracket X\rrbracket[\Gamma]$-module by letting $k\llbracket X\rrbracket$ and $\Gamma$ act via $(g(X).f)(-)=f(g(X)(-))$ and $(\gamma.f)(-)=f(\gamma^{-1}(-))$ for all $f\in \tau^{\vee}$, $g(X)\in k\llbracket X\rrbracket$ and $\gamma\in \Gamma$. By smoothness, we can write $\tau=\varinjlim_d \tau[X^{p^d}]$. Now each $\tau[X^{p^d}]$ is a finite dimensional $k\llbracket X\rrbracket[\Gamma]$-submodule of $\tau$, so that $\tau^{\vee}=\varprojlim_d (\tau[X^{p^d}]^{\vee})$ is indeed a profinite $k\llbracket X\rrbracket[\Gamma]$-module.\\

The following isomorphism is a corrected version of the one defined by Breuil, see also \cite[Equation (14)]{herzig_co} and the corresponding footnote there.

\begin{lem}
	The map
	\begin{align*}
		(\varphi^{\ast}\tau)^{\vee}=\left(k\llbracket X\rrbracket\otimes_{\varphi,k\llbracket X\rrbracket} \tau \right)^{\vee}&\xrightarrow{\cong} k\llbracket X\rrbracket\otimes_{\varphi,k\llbracket X\rrbracket} \tau^{\vee}=\varphi^{\ast}(\tau^{\vee})\\
		f & \mapsto \sum_{i=0}^{p-1} (1+X)^{i} \otimes f\left((1+X)^{-i}\otimes (-)\right)
	\end{align*}
	is an $k\llbracket X\rrbracket$-linear isomorphism.
\end{lem}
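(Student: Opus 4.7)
\medskip

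\noindent\textbf{Proof plan.} The foundation of the argument is the elementary but essential fact that $k\llbracket X\rrbracket$ is free of rank $p$ as a module over its subring $\varphi(k\llbracket X\rrbracket)=k\llbracket X^p\rrbracket$, with basis $\{(1+X)^i\}_{i=0}^{p-1}$ (equivalently $\{(1+X)^{-i}\}_{i=0}^{p-1}$, since these differ from the previous by $\varphi(k\llbracket X\rrbracket)^{\times}$-factors, using $(1+X)^p = \varphi(1+X)$). From this I get canonical decompositions
\[
\varphi^*\tau = \bigoplus_{i=0}^{p-1} (1+X)^{-i}\otimes \tau, \qquad \varphi^*(\tau^{\vee}) = \bigoplus_{i=0}^{p-1} (1+X)^{i}\otimes \tau^{\vee},
\]
so every element on the left decomposes uniquely as $\sum_i(1+X)^{-i}\otimes v_i$ with $v_i\in\tau$, and every element on the right as $\sum_i(1+X)^i\otimes g_i$ with $g_i\in\tau^{\vee}$.

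\medskip

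\noindent First I verify well-definedness: for $f\in(\varphi^*\tau)^{\vee}$ and $0\leq i\leq p-1$, the map $v\mapsto f((1+X)^{-i}\otimes v)$ is $k$-linear and continuous (the map $\tau\to\varphi^*\tau$ given by $v\mapsto (1+X)^{-i}\otimes v$ is continuous as $\tau$ carries the discrete topology), hence lies in $\tau^{\vee}$, so $\Phi(f)\in\varphi^*(\tau^{\vee})$. Bijectivity I prove by writing down the inverse: given $g=\sum_i (1+X)^i\otimes g_i$, define $f\in(\varphi^*\tau)^{\vee}$ by $f\bigl(\sum_i (1+X)^{-i}\otimes v_i\bigr) = \sum_i g_i(v_i)$. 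Uniqueness of the basis decomposition makes $f$ well-defined on $\varphi^*\tau$, continuity follows because $\tau = \varinjlim_d \tau[X^{p^d}]$ and each $g_i$ factors through a finite-dimensional quotient, and a direct computation shows that $\Phi$ and this construction are mutually inverse.

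\medskip

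\noindent The remaining task is $k\llbracket X\rrbracket$-linearity, i.e.\ $\Phi(r\cdot f)=r\cdot\Phi(f)$ for all $r\in k\llbracket X\rrbracket$. By continuity and $k$-linearity, it suffices to check this on elements of the form $r=(1+X)^j\varphi(s)$, and further, by multiplicativity of the action on both sides, it suffices to treat the two cases $r=\varphi(s)$ and $r=(1+X)^j$ separately. The case $r=\varphi(s)$ is immediate: the identity $\varphi(s)\cdot\bigl((1+X)^{-i}\otimes v\bigr) = (1+X)^{-i}\otimes sv$ together with the definition $(\varphi(s)\cdot f)(w)=f(\varphi(s)w)$ shows $(\varphi(s)\cdot f)_i = s\cdot f_i$, so
\[
\Phi(\varphi(s)\cdot f)=\sum_i(1+X)^i\otimes s\cdot f_i = \sum_i (1+X)^i\varphi(s)\otimes f_i = \varphi(s)\cdot\Phi(f).
\]
For $r=(1+X)^j$, I expand $(1+X)^j\cdot (1+X)^{-i} = (1+X)^{j-i}$ and normalize the exponent into the range $[0,p-1]$ via $j-i\equiv -i' \pmod p$, picking up a factor $\varphi((1+X)^{k})$ where $k=(j-i+i')/p$; this gives an index permutation $i\mapsto i'$ together with a compensating action by $(1+X)^k$, and the same bookkeeping on the right hand side $(1+X)^j\cdot\Phi(f)=\sum_i (1+X)^{j+i}\otimes f_i$ produces the same expression after relabelling.

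\medskip

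\noindent The only real obstacle is the last re-indexing argument; everything else is formal from the free-basis decomposition. Once the two generating cases are handled, continuity and $k$-linearity upgrade these to general $r\in k\llbracket X\rrbracket$, completing the proof.
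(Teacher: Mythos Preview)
Your proof is correct and follows essentially the same approach as the paper: both use the fact that $\{(1+X)^j\}_{0\le j\le p-1}$ (and equivalently $\{(1+X)^{-j}\}_{0\le j\le p-1}$) is a basis of $k\llbracket X\rrbracket$ over $k\llbracket X^p\rrbracket=\varphi(k\llbracket X\rrbracket)$, reduce $k\llbracket X\rrbracket$-linearity to $k\llbracket X^p\rrbracket$-linearity plus equivariance for the basis elements, and obtain the inverse from the second basis. Your write-up simply spells out the ``straightforward'' verifications the paper omits; the remarks about continuity are unnecessary since $(-)^{\vee}=\Hom_k(-,k)$ is the algebraic dual, but they do no harm.
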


\begin{proof}
	Since $k\llbracket X\rrbracket$ has basis $\{(1+X)^{j}\}_{0\leq j\leq p-1}$ over $k\llbracket X^p\rrbracket$, it suffices to show that the map is $k\llbracket X^p\rrbracket$-linear and equivariant for the basis elements, which is straight forward. Similarly, $\{(1+X)^{-j}\}_{0\leq j\leq p-1}$ is another basis, which implies that the map is injective and gives a recipe for constructing the inverse.
\end{proof}

Since $\tau$ is finitely generated over $k\llbracket X\rrbracket[F]$ and torsion over $k\llbracket X\rrbracket$ (by smoothness), the cokernel of (\ref{linearized}) is finitely generated and torsion over $k\llbracket X\rrbracket$, i.e.\ it is finite dimensional over $k$. Thus, taking Pontryagin duals and inverting $X$, we obtain a $k((X))$-linear injection
\begin{equation}\label{F_dual}
	F^{\vee}[1/X]\colon \tau^{\vee}[1/X]\hookrightarrow (\varphi^{\ast}\tau)^{\vee}[1/X]\cong \varphi^{\ast}(\tau^{\vee}[1/X]).
\end{equation}
Since $\tau[X]$ is finite dimensional, we dually have that $\tau^{\vee}/X\tau^{\vee}$ is finite dimensional, i.e.\ the profinite $k\llbracket X\rrbracket$-module $\tau^{\vee}$ is finitely generated by the topological Nakayama Lemma. Thus, $\tau^{\vee}[1/X]$ is finite dimensional over $k((X))$. Hence, the above injection $F^{\vee}[1/X]$ between finite dimensional $k((X))$-vector spaces of the same dimension must be an isomorphism. We now define $\varphi_{\tau^{\vee}[1/X]}$ to be the composition
\[
\varphi_{\tau^{\vee}[1/X]}\colon \tau^{\vee}[1/X]\xrightarrow{1\otimes \operatorname{id}} \varphi^{\ast}(\tau^{\vee}[1/X])\xrightarrow{\left(F^{\vee}[1/X]\right)^{-1}} \tau^{\vee}[1/X].
\]
Together with the action of $\Gamma$ on $\tau^{\vee}[1/X]$ induced by the contragredient action on $\tau^{\vee}$ defined above, we then obtain the $(\varphi,\Gamma)$-module $\tau^{\vee}[1/X]$ over $k((X))$, which is \'{e}tale by construction.

\begin{remark}\label{formula}
	The following formula appearing in \cite[Equation (156)]{herzig_co} will come in handy:
	\[
	\sum_{i=0}^{p-1} (1+X)^{i} \varphi_{\tau^{\vee}[1/X]}(((1+X)^{-i}f)\circ F)=f \text{ for all $f\in \tau^{\vee}[1/X]$.}
	\]
	It is enough to prove this equality after applying the injection (\ref{F_dual}), where it then follows from the definitions.
\end{remark}

By taking duals $(-)'$ in the category of \'{e}tale $(\varphi,\Gamma)$-modules, we obtain the assignment $\tau\mapsto (\tau^{\vee}[1/X])'$. After possibly applying Fontaine's equivalence \ref{fontaine_equiv}, which is compatible with duals, we may pass to the filtered colimit to arrive at the definition:

\begin{definition}[{\cite[p.\ 9]{Breuil_phi}}]\label{D_dual} Given a smooth $k$-representation $\pi$ of $P$, we put\footnote{Breuil defines a pro-object. Since we dualized, we obtain an ind-object.}
	\[
	\mathbf{D}(\pi):= \underset{\tau\in \mathcal{M}(\pi)}{``{\varinjlim}"} (\tau^{\vee}[1/X])' \in \Ind(\phigamma)
	\]
	and
	\[
	\mathbf{V}(\pi):=\underset{\tau\in \mathcal{M}(\pi)}{``{\varinjlim}"} \mathscr{V}(\tau^{\vee}[1/X])^{\vee} \in \Ind(\Mod^{\fin}_{\GQp}(k)),
	\]
	where the filtered colimit is taken in the respective (abelian) ind-category. 
\end{definition}

Note that we have fully faithful embeddings $\phigamma\hookrightarrow \Ind(\phigamma)$ and $\Mod^{\fin}_{\GQp}(k)\hookrightarrow \Ind(\Mod^{\fin}_{\GQp}(k))$ which define an equivalence onto the respective subcategory of compact objects.

\begin{prop}[{\cite[Proposition 2.7]{Breuil_phi}}]\label{exactness_properties}
	\begin{enumerate}
		\item[{\rm (i)}] The assignment $\pi\mapsto \mathbf{D}(\pi)$ is functorial, resulting in a covariant functor $\mathbf{D}\colon \Mod^{\operatorname{sm}}_{P}(k)\to \Ind(\phigamma)$.
		\item[{\rm (ii)}] The functor $\mathbf{D}$ is left-exact.
		\item[{\rm (iii)}] If $\pi\in \Mod^{\sm}_P(k)$ is $k[F]$-torsion, then $\mathbf{D}(\pi)=0$.
		\item[{\rm (iv)}] If $0\to \pi_1\to \pi_2\to \pi_3\to 0$ is an exact sequence in $\Mod^{\sm}_P(k)$ such that every finitely generated $k\llbracket X\rrbracket[F]$-submodule of $\pi_1$ has finite dimensional $X$-torsion, then the sequence $0\to \mathbf{D}(\pi_1)\to \mathbf{D}(\pi_2)\to \mathbf{D}(\pi_3)\to 0$ is exact.
	\end{enumerate}
\end{prop}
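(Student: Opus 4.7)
For (i), given $f\colon \pi\to\pi'$ and $\tau\in\mathcal{M}(\pi)$, I would first verify that $f(\tau)\in\mathcal{M}(\pi')$: finite generation over $k\llbracket X\rrbracket[F,\Gamma]$ is immediate, and finite-dimensionality of $f(\tau)[X]$ follows by dualizing, since $f(\tau)^\vee$ is a $k\llbracket X\rrbracket$-submodule of the finitely generated module $\tau^\vee$ (the latter being fg by the topological Nakayama argument already used to construct $\mathbf{D}$), hence itself fg by Noetherianity of $k\llbracket X\rrbracket$, dually giving $f(\tau)[X]$ finite-dim. The surjection $\tau\twoheadrightarrow f(\tau)$ then dualizes under $(-)^\vee[1/X]$ and subsequently the duality $(-)'$ in $\phigamma$ to a morphism $(\tau^\vee[1/X])'\to (f(\tau)^\vee[1/X])'$, which is compatible with the filtered systems under the assignment $\tau\mapsto f(\tau)$, yielding $\mathbf{D}(f)$ in $\Ind(\phigamma)$; composition compatibility is automatic.

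For (ii), given left-exact $0\to\pi_1\hookrightarrow \pi_2\xrightarrow{g}\pi_3$, I would identify $\mathcal{M}(\pi_1)$ canonically with $\{\tau\in\mathcal{M}(\pi_2)\noloc \tau\subset \pi_1\}$ (the defining conditions are intrinsic), with identity transition maps at the level of $(\tau^\vee[1/X])'$. This presents $\mathbf{D}(\pi_1)\to\mathbf{D}(\pi_2)$ as the inclusion of a sub-ind-system, hence a monomorphism in $\Ind(\phigamma)$, and identifies $\ker(\mathbf{D}(g))$ with $\mathbf{D}(\pi_1)$, since the map $(\tau^\vee[1/X])'\to(g(\tau)^\vee[1/X])'$ vanishes precisely when $g(\tau)=0$, equivalently when $\tau\subset\pi_1$.

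For (iii), given $k[F]$-torsion $\pi$ and $\tau\in\mathcal{M}(\pi)$, I would show $\tau$ is finite-dimensional over $k$, so that $\tau^\vee$ is $X$-nilpotent and hence $\tau^\vee[1/X]=0$. Smoothness of the $\Gamma\cong \bZ_p^\times$-action yields finite $\Gamma$-orbits of the $k\llbracket X\rrbracket[F,\Gamma]$-generators, so $\tau$ is fg over $k\llbracket X\rrbracket[F]$ by some finite set $w_1,\dots,w_m$. By the $k[F]$-torsion hypothesis, each $w_i$ is killed by some $P_i(F)\in k[F]$; combining this with the relation $FX=X^pF$ shows $\tau$ is generated as $k\llbracket X\rrbracket$-module by the finite set $\{F^j w_i\noloc 0\le j<\deg P_i\}$, and smoothness of the upper-unipotent action ensures $\tau$ is moreover $X$-nilpotent, hence finite-dimensional over $k$.

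For (iv), combining with (ii), the task is surjectivity of $\mathbf{D}(g)$. Given $\tau_3\in\mathcal{M}(\pi_3)$ generated by $v_1,\dots,v_n$, I would lift to preimages $\tilde v_i\in \pi_2$ and set $\tau_2:=\sum_i k\llbracket X\rrbracket[F,\Gamma]\tilde v_i$; by construction $g(\tau_2)=\tau_3$ and $\tau_2$ is fg over $k\llbracket X\rrbracket[F]$ (same argument as in (iii)). The remaining step is to verify $\tau_2\in\mathcal{M}(\pi_2)$, i.e.\ that $\tau_2[X]$ is finite-dim; applying the left-exact functor $(-)[X]$ to the sequence $0\to \tau_2\cap \pi_1\to \tau_2\to \tau_3\to 0$ reduces this to showing $(\tau_2\cap\pi_1)[X]$ is finite-dim. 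This is the main obstacle: the hypothesis in (iv) applies to fg $k\llbracket X\rrbracket[F]$-submodules of $\pi_1$, so one needs a Noetherianity-type argument for $\tau_2$ over the twisted ring $k\llbracket X\rrbracket[F;\varphi]$ ensuring $\tau_2\cap\pi_1$ is itself fg over $k\llbracket X\rrbracket[F]$ (alternatively one could carry out the analysis on the Pontryagin dual side, where $\tau_2^\vee$ is a compact finitely generated $k\llbracket X\rrbracket$-module). Granting this, the injection $\tau_3^\vee\hookrightarrow \tau_2^\vee$ then, after inverting $X$ and applying $(-)'$, produces the required surjection $\mathbf{D}(\pi_2)\twoheadrightarrow \mathbf{D}(\pi_3)$ in $\Ind(\phigamma)$.
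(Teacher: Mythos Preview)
The paper does not prove this proposition; it is stated with a direct citation to \cite[Proposition 2.7]{Breuil_phi} and no argument is given. So there is no ``paper's own proof'' to compare against beyond Breuil's original.

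That said, your sketch has two genuine gaps.

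In (ii), the claimed identification of $\ker(\mathbf{D}(g))$ is incorrect. The kernel of a morphism of ind-objects is computed levelwise, so $\ker(\mathbf{D}(g))$ is $\icolim_{\tau\in\mathcal{M}(\pi_2)}\ker\bigl((\tau^\vee[1/X])'\to(g(\tau)^\vee[1/X])'\bigr)$, and each levelwise kernel is $((\tau\cap\pi_1)^\vee[1/X])'$, not zero whenever $g(\tau)\neq 0$. Your criterion ``the map vanishes precisely when $\tau\subset\pi_1$'' is both false (the map is always a surjection, so it vanishes iff $g(\tau)^\vee[1/X]=0$, i.e.\ iff $g(\tau)$ is finite-dimensional) and irrelevant to computing the kernel. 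To identify the colimit of these levelwise kernels with $\mathbf{D}(\pi_1)$ you must show that the $\tau\cap\pi_1$, or a cofinal family of their finitely generated $k\llbracket X\rrbracket[F]$-submodules, realise $\mathcal{M}(\pi_1)$; this is exactly the same finiteness issue you flag in (iv) and cannot be bypassed.

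In (iv), you correctly isolate the crux --- that $\tau_2\cap\pi_1$ be finitely generated over $k\llbracket X\rrbracket[F]$ --- but the ``Noetherianity-type argument'' you invoke is not available as stated: the twisted polynomial ring $k\llbracket X\rrbracket[F;\varphi]$ is \emph{not} left Noetherian (it is only left coherent, cf.\ \cite{emerton_coherent_rings}). One instead exploits the hypothesis on $\pi_1$ more directly: the dual $(\tau_2\cap\pi_1)^\vee$, being a quotient of the finitely generated $k\llbracket X\rrbracket$-module $\tau_2^\vee$, is itself finitely generated over $k\llbracket X\rrbracket$, and one argues on the dual side (or equivalently via the finite-presentation/finite-length properties of Emerton's framework) rather than appealing to Noetherianity of the skew ring. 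Your parenthetical ``alternatively one could carry out the analysis on the Pontryagin dual side'' is in fact the correct route, not an alternative.

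Parts (i) and (iii) are fine.
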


\begin{definition}\label{phi_gamma_twist}
	For a smooth character $\chi\colon \bQ_p^{\times}\to k^{\times}$ and a $(\varphi,\Gamma)$-module $D\in \phigamma$, define $D\otimes \chi\in \phigamma$ to be the $(\varphi,\Gamma)$-module defined by
	\begin{itemize}
		\item $D\otimes \chi=D$ on underlying $k((X))$-vector spaces
		\item $\varphi_{D\otimes \chi}=\chi(p)\varphi_D$
		\item $\gamma_{D\otimes \chi}=\chi(\gamma)\gamma_D$ for all $\gamma\in \Gamma$,
	\end{itemize}
	i.e.\ we twist the $(\varphi,\Gamma)$-action on $D$ by the character $\chi$.
\end{definition}

\begin{remark}\label{remark_twist_Br}
	\begin{enumerate}
		\item[{\rm (i)}] The twisting construction $D\mapsto D\otimes \chi$ defines an autoequivalence on $\phigamma$ and hence induces one on the ind-category $\Ind(\phigamma)$.
		\item[{\rm (ii)}] The functor $\mathbf{D}\colon \Mod^{\sm}_{P}(k)\to \Ind(\phigamma)$ is compatible with twists by characters: For $\pi\in \Mod^{\sm}_P(k)$, we have $\mathbf{D}(\pi\otimes \chi\circ \det)=\mathbf{D}(\pi)\otimes \chi$.
	\end{enumerate}
\end{remark}

\subsection{Calculating Galois representations}\label{section_Galois_calculate} For specific $\pi\in \Mod^{\sm}_P(k)$, or rather a specific $\tau\in \mathcal{M}(\pi)$, the Galois representation $\mathscr{V}(\tau^{\vee}[1/X])$ can be computed explicitly, which will be relevant for us when computing the image of a genuine supersingular representation under the extension of Colmez's functor. The results in this subsection are an elaborated version of parts of Florian Herzig's lecture notes \cite{herzig_notes}.

We will work in the following setup also appearing in \cite[§4.4.1]{witthaus_meta}.\\

\textbf{Setup.} Let $\pi\in \Mod^{\sm}_P(k)$. Suppose that we are given $k$-linearly independent $\Gamma$-eigenvectors $v_1,\ldots,v_n\in \pi[X]$ killed by $X$ with $\Gamma$-eigencharacters $\chi_1,\ldots,\chi_n$, satisfying
\begin{equation}\label{cycle_equation}
	X^{s_i}F(v_i)=c_i v_{i+1} \text{ for some $s_i\in \bZ_{\geq 0}$ and $c_i\in k^{\times}$}
\end{equation}
for all $1\leq i\leq n$.
Here and in what follows we understand the subscript $i$ to be the unique element in $\{1,\ldots,n\}$ congruent to $i$ modulo $n$. We define the $\Gamma$-stable subspace
\[
M=\bigoplus_{i=1}^n kv_i
\]
of $\pi$ and let $\pi_M=k\llbracket X\rrbracket[F]M \subset \pi$ be the $k\llbracket X\rrbracket[F]$-submodule generated by $M$ as in Definition \ref{def_pi_M}. Finally, for $1\leq i\leq n$, we put $\pi_i=k\llbracket X\rrbracket[F^n]v_i\subset \pi_M.$ The subspaces $\pi_M$ and $\pi_i$, for $1\leq i\leq n$, are stable under the action of $\Gamma$. We would like to understand the $(\varphi,\Gamma)$-action on $\pi_M^{\vee}[1/X]$. If $s_i=0$ for all $1\leq i\leq n$, then $\pi_M=M$ is finite dimensional over $k$ and hence its localized Pontryagin dual vanishes, which is why we from now on assume that $s_j>0$ for some $1\leq j\leq n$.

\begin{lem}[{\cite[Lemma 4.41]{witthaus_meta}}]\label{basic_lemma}
	\begin{enumerate}
		\item[{\rm (i)}] As $k\llbracket X\rrbracket[F^n,\Gamma]$-modules, $\pi_M=\bigoplus_{i=1}^n \pi_i$.
		\item[{\rm (ii)}] For each $1\leq i\leq n$, the $k\llbracket X\rrbracket$-module $\pi_i^{\vee}$ is free of rank one.
		\item[{\rm (iii)}] Assume that the following condition is satisfied: If for some $1\leq i\neq j\leq n$, the vectors $v_i,v_j$ have the same $\Gamma$-eigencharacter, then $s_{i}\neq s_j$. Then $\pi_M$ is an irreducible $P^+$-representation.
	\end{enumerate}
\end{lem}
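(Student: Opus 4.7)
The plan begins by deriving the iterated form of the hypothesis. Applying $F$ repeatedly and using $Ff(X)=f(X^p)F$, I obtain $X^{T_{i,r}}F^r v_i = C_{i,r}v_{i+r}$ for all $r\geq 0$, where $T_{i,r}=\sum_{j=0}^{r-1} p^{r-1-j} s_{i+j}$ and $C_{i,r}=\prod_{j=0}^{r-1} c_{i+j}$; at $r=n$ this is the cyclic relation $X^{t_i}F^n v_i = C v_i$ with $t_i:=T_{i,n}$ and $C:=\prod_j c_j$.

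For (i), the key identity is
\[
F^r v_i = (C_{i,r}/C)\, X^{t_{i+r}-T_{i,r}}\,F^n v_{i+r}\qquad(0\leq r\leq n),
\]
which I verify by induction on $r$: applying $F$ to the identity at step $r$ produces an expression involving $F^{n+1}v_{i+r}$, which one rewrites via the formula for $Fv_{i+r}$ and then the second-level cyclic relation $X^{p^n t_{i+r+1}}F^{2n}v_{i+r+1}=CF^n v_{i+r+1}$. A direct combinatorial check (using $pt_{i+r}=t_{i+r+1}+(p^n-1)s_{i+r}$ and the inequality $t_{i+r}-T_{i,r}\geq p^{n-1}s_{i+r}$, both immediate from the explicit sums) matches the expected $r+1$ form. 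Hence $F^r v_i\in\pi_{i+r}$, so $\pi_M=\sum_i \pi_i$. Each $\pi_i$ is the filtered union of the cyclic $k\llbracket X\rrbracket$-submodules $\pi_i^{(B)}=k\llbracket X\rrbracket\cdot F^{Bn}v_i\cong k\llbracket X\rrbracket/X^{N_B}$ with $N_B=1+t_i(p^{nB}-1)/(p^n-1)$, so in particular $\pi_i[X]=kv_i$; combining this with the $k$-linear independence of $v_1,\ldots,v_n$, an induction on the $X$-torsion filtration promotes the surjection $\bigoplus_i\pi_i\twoheadrightarrow \pi_M$ to the direct-sum decomposition. Part (ii) then follows by Pontryagin dualizing $\pi_i\cong\varinjlim_B k\llbracket X\rrbracket/X^{N_B}$.

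For (iii), let $\tau\subseteq\pi_M$ be a non-zero $P^+$-subrepresentation. Multiplying any non-zero element by a sufficiently high power of $X$ produces a non-zero $w\in\tau\cap M$; after $\Gamma$-isotypic decomposition, I reduce to $w=\sum_{j\in J^*}\alpha_j v_j$ with all $\chi_j$ equal and all $\alpha_j\in k^\times$. By the hypothesis of (iii), the $s_j$'s for $j\in J^*$ are pairwise distinct; letting $j_*\in J^*$ maximize $s_{j_*}$, the operator $X^{s_{j_*}}F$ sends $w$ to $\alpha_{j_*}c_{j_*}v_{j_*+1}$, because all other terms equal $\alpha_j c_j X^{s_{j_*}-s_j}v_{j+1}=0$ (since $s_{j_*}-s_j\geq 1$ and $Xv_{j+1}=0$). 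Hence $v_{j_*+1}\in\tau$, and applying $k\llbracket X\rrbracket[F^n]$ yields $\pi_{j_*+1}\subseteq\tau$; then applying $F$ followed by $X^{s_{j_*+1}}$ yields $v_{j_*+2}\in\tau$, and iterating around the cycle gives $\pi_\ell\subseteq\tau$ for all $\ell$, so $\tau=\pi_M$.

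The main obstacle is the direct-sum assertion in (i): the surjection $\bigoplus_i\pi_i\twoheadrightarrow\pi_M$ is immediate from the explicit formula, but establishing injectivity requires careful tracking through the $X$-torsion filtration, since a priori elements of different $\pi_j$'s may have overlapping $X$-torsion.
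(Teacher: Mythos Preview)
Your proof is correct. The paper does not give its own argument for this lemma; it simply cites \cite[Lemma 4.41]{witthaus_meta}, and the surrounding text (the isomorphism $k\llbracket X\rrbracket/(X^{e(i)_m+1})\cong\pi_{i,m}$ and the reference to \cite[Lemma 4.43]{witthaus_meta}) confirms exactly the cyclic structure of the $\pi_i$ that you use.

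Two small remarks on your write-up. First, in the inductive verification of the key identity $F^r v_i=(C_{i,r}/C)X^{t_{i+r}-T_{i,r}}F^n v_{i+r}$, the ``second-level cyclic relation'' you invoke is not needed: applying $F^n$ to $X^{s_{i+r}}Fv_{i+r}=c_{i+r}v_{i+r+1}$ gives $X^{p^n s_{i+r}}F^{n+1}v_{i+r}=c_{i+r}F^n v_{i+r+1}$, and since $p(t_{i+r}-T_{i,r})\ge p^n s_{i+r}$ (from the $j'=0$ term in your explicit sum) you may factor and substitute directly; your identities $pt_{i+r}=t_{i+r+1}+(p^n-1)s_{i+r}$ and $pT_{i,r}=T_{i,r+1}-s_{i+r}$ then close the induction cleanly. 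Second, the ``main obstacle'' you flag---directness of $\bigoplus_i\pi_i$---is actually the easy half once $\pi_i[X]=kv_i$ is known: given $\sum_i w_i=0$ with $w_i\in\pi_i$, pick $a$ maximal with $X^a w_i\ne 0$ for some $i$, and then $0=X^a\sum w_i$ is a nontrivial $k$-linear relation among the $v_i$. The genuinely delicate point is rather the identity itself, since from $X^{T_{i,r}}F^r v_i=C_{i,r}v_{i+r}$ and $X^{t_{i+r}}F^n v_{i+r}=Cv_{i+r}$ alone one cannot cancel powers of $X$; it is the inductive argument (or equivalently, locating both elements in the single cyclic module $k\llbracket X\rrbracket F^{n+r}v_i$) that forces the equality.
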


By parts (i) and (ii) of the lemma, we may write $\pi_M^{\vee}=\oplus_{i=1}^n \pi_i^{\vee}$ as $k\llbracket X\rrbracket$-modules with each $\pi_i^{\vee}$ being free of rank one: namely, a basis element of it is given by any $k$-linear function $\pi_i\to k$ not vanishing at $v_i$ (by Nakayama's Lemma). In order to compute the action of $\varphi$ and $\Gamma$, we would like to exhibit a particular basis element, for which we write $\pi_i=\varinjlim_m \pi_{i,m}$ as the increasing union of the subspaces $\pi_{i,m}=k\llbracket X\rrbracket F^{nm}v_i$. Letting $c=\prod_{i=1}^n c_i$ be the product of the constants appearing in (\ref{cycle_equation}), $e(i)=\sum_{j=0}^{n-1} p^{n-1-j}s_{i+j}$ and defining $e(i)_m=e(i)\frac{1-p^{nm}}{1-p^n}$ for all $m\geq 1$, an iteration of the equation(s) just cited shows that the map
\begin{align}\label{iso_pi_{i,m}}
	k\llbracket X\rrbracket/(X^{e(i)_{m}+1}) & \xrightarrow{\cong} \pi_{i,m}\\
	f(X)& \mapsto c^{-m}f(X)F^{nm}v_i\nonumber
\end{align}
is an isomorphism, see also \cite[Lemma 4.43, Equation (22)]{witthaus_meta}. We now define $f_i\in \pi_i^{\vee}=\varprojlim_m \pi_{i,m}^{\vee}$ by requiring
\[
f_i|_{\pi_{i,m}}\colon \pi_{i,m}\cong k\llbracket X\rrbracket/(X^{e(i)_{m}+1})\twoheadrightarrow k,
\]
to be the composition of the inverse of the isomorphism (\ref{iso_pi_{i,m}}) with the projection  $\sum_{d=0}^{e_i(m)}a_d X^d\mapsto a_{e(i)_m}$, for all $m\geq 1$. We have $f_i(v_i)=1\neq 0$ and so $f_i$ indeed is a basis element of $\pi_i^{\vee}$. Thus, the set $\{f_1,\ldots,f_n\}$ is a basis of the free $k\llbracket X\rrbracket$-module $\pi_M^{\vee}$.

\begin{lem}\label{phi_Gamma_description}
	For all $1\leq i\leq n$,
	\[
	\varphi(f_i)\in c_i^{-1} (1+Xk\llbracket X\rrbracket)X^{s_i-(p-1)}f_{i+1}
	\]
	and
	\[
	\gamma(f_i)\in \chi_i(\gamma)^{-1}(1+Xk\llbracket X\rrbracket)f_i, \text{ for all $\gamma\in \Gamma$,}
	\]
	where $\chi_i$ is the $\Gamma$-eigencharacter of $v_i$.
\end{lem}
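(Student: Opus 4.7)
The proof splits into two independent parts. The $\Gamma$-part follows quickly from the decomposition of Lemma~\ref{basic_lemma}; the $\varphi$-part requires unwinding the definition of $\varphi$ via $F^\vee[1/X]$ and an explicit computation.

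\textbf{For the $\Gamma$-action.} I would first observe that each $\pi_j \subset \pi_M$ is $\Gamma$-stable: $\Gamma$ commutes with $F^n$, fixes the $\chi_j$-eigencharacter of $v_j$, and preserves $k\llbracket X\rrbracket$ because $\gamma(X)=(1+X)^\gamma-1\in Xk\llbracket X\rrbracket$. Consequently, for any $j\neq i$ and any $\gamma\in\Gamma$, the functional $(\gamma f_i)|_{\pi_j}=f_i\circ\gamma^{-1}|_{\pi_j}$ vanishes, since $f_i|_{\pi_j}=0$ and $\gamma^{-1}\pi_j=\pi_j$. Hence $\gamma f_i\in\pi_i^\vee=k\llbracket X\rrbracket\cdot f_i$ by Lemma~\ref{basic_lemma}(ii), so $\gamma f_i=h(X)f_i$ for a unique $h\in k\llbracket X\rrbracket$. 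Evaluating at $v_i$, which is killed by $X$, yields $h(0)=(\gamma f_i)(v_i)=f_i(\gamma^{-1}v_i)=\chi_i(\gamma)^{-1}$, giving the stated formula.

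\textbf{For the $\varphi$-action.} The element $\varphi(f_i)\in \pi_M^\vee[1/X]$ is characterized by $F^\vee(\varphi(f_i))=1\otimes f_i$ under the identification $(\varphi^*\pi_M)^\vee\cong\varphi^*(\pi_M^\vee)$ preceding Remark~\ref{formula}. My plan is to compute $F^\vee(f_{i+1})$ explicitly and then invert. Under the identification,
\[
F^\vee(f_{i+1})\longleftrightarrow\sum_{j=0}^{p-1}(1+X)^j\otimes\phi_j,\qquad \phi_j(u)=f_{i+1}\bigl((1+X)^{-j}F(u)\bigr).
\]
The first step is to show $\phi_j\in\pi_i^\vee$, i.e.\ $\phi_j$ vanishes on each $\pi_\ell$ with $\ell\neq i$. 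This follows by tracking the decomposition of $F^{nm+1}(v_\ell)$ in $\pi_M=\bigoplus_k\pi_k$: iterating the cycle relation $X^{s_\ell}F(v_\ell)=c_\ell v_{\ell+1}$ shows that the top-degree coefficient (which is what $f_{i+1}$ extracts via its definition through (\ref{iso_pi_{i,m}})) of the $\pi_{i+1}$-component of $F^{nm+1}(v_\ell)$ vanishes whenever $\ell\neq i$. Writing $\phi_j=b_j(X)f_i$ and evaluating $\phi_j$ on the generating element $X^{e(i)_m-N}F^{nm}v_i$ of $\pi_{i,m}$, a direct computation using the relation $X^{s_ip^{nm}}F^{nm+1}(v_i)=c_iF^{nm}(v_{i+1})$ obtained by applying $F^{nm}$ to (\ref{cycle_equation}) yields
\[
b_{j,N}=c_i\binom{-j}{s_i+Np},\qquad N\geq 0.
\]

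By $\varphi$-semilinearity, $F^\vee(f_{i+1})\leftrightarrow\bigl(\sum_{j=0}^{p-1}(1+X)^jb_j(X^p)\bigr)\otimes f_i$. Writing $\varphi(f_i)=h(X)f_{i+1}$, the characterization $F^\vee(\varphi(f_i))=1\otimes f_i$ becomes
\[
h(X)\cdot\sum_{j=0}^{p-1}(1+X)^jb_j(X^p)=1,
\]
so $h$ is the reciprocal. The $N=0$ contribution to the right factor is $c_i\sum_{j=0}^{p-1}\binom{-j}{s_i}(1+X)^j$. A standard calculation in $\bF_p$ (using Lucas's theorem and the vanishing of $\binom{p-1+s}{s+1}$ for $1\leq s\leq p-2$) shows that this polynomial has $X$-valuation exactly $p-1-s_i$ with leading coefficient $1$, so the $N=0$ contribution lies in $c_iX^{p-1-s_i}(1+Xk\llbracket X\rrbracket)$; contributions from $N\geq 1$ carry a factor $X^{Np}$ and are of strictly higher order. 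Inverting therefore places $h(X)$ in $c_i^{-1}X^{s_i-(p-1)}(1+Xk\llbracket X\rrbracket)$, as claimed.

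\textbf{Main obstacles.} The two delicate points are (a) verifying $\phi_j\in\pi_i^\vee$, which requires controlling how the $F$-iterates distribute across the direct sum $\pi_M=\bigoplus_k\pi_k$ modulo $X$-torsion corrections, and (b) pinning down the precise leading exponent $s_i-(p-1)$ via the characteristic-$p$ combinatorial identity for $\sum_{j=0}^{p-1}\binom{-j}{s_i}(1+X)^j$; the other steps are essentially bookkeeping around the definitions.
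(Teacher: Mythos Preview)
Your $\Gamma$-argument coincides with the paper's. For $\varphi$, your strategy is sound but genuinely different from the paper's, and more laborious. You compute $F^\vee(f_{i+1})$ in full as $A(X)(1\otimes f_i)$, which forces you to pin down every coefficient $b_{j,N}=c_i\binom{-j}{s_i+pN}$ (this is indeed obtainable by evaluating at $X^{e(i)_m-N}F^{nm}v_i$ for $m$ large enough that $pe(i)_m-pN\geq s_ip^{nm}$, then reducing via the iterated relation $X^{s_ip^{nm}}F^{nm+1}v_i=c_iF^{nm}v_{i+1}$) and then to establish the combinatorial identity that $\sum_{j=0}^{p-1}\binom{-j}{s_i}(1+X)^j$ has $X$-valuation exactly $p-1-s_i$ with leading coefficient~$1$. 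The paper bypasses all of this by applying the formula of Remark~\ref{formula} with $f=X^{s_i}f_{i+1}$: multiplying by $X^{s_i}$ \emph{before} precomposing with $F$ means one only needs the constant term of $(X^{s_i+j}f_{i+1})\circ F$, which is read off instantly by evaluating at~$v_i$, together with the single identity $\sum_{j=0}^{p-1}(1+X)^j=X^{p-1}$. Your route yields the full power series $b_j(X)$, which is extra information; the paper's route is shorter because the $X^{s_i}$-shift kills precisely the terms that make your direct computation delicate.

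Two minor points. First, your argument for $\phi_j\in\pi_i^\vee$ is more involved than necessary: since $F(\pi_\ell)\subset\pi_{\ell+1}$ (see \cite[Lemma~4.43(i)]{witthaus_meta}), the vanishing on $\pi_\ell$ for $\ell\neq i$ is immediate. Second, note that the setup forces $0\leq s_i\leq p-1$ automatically (from $Xv_i=0$ one gets $X^pF(v_i)=F(Xv_i)=0$, so $s_i\geq p$ would give $c_iv_{i+1}=0$), hence your combinatorial identity is only ever needed in this range.
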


\begin{proof}
	We follow the proof of \cite[Proposition 3.2.4.2]{herzig_co}. Let $1\leq i\leq n$. We claim that for all $j\geq 0$, 
	\[
	F^{\vee}((X^{s_i+j}f_{i+1}))=(X^{s_i+j}f_{i+1})\circ F\in \begin{cases}
		c_i(1+Xk\llbracket X\rrbracket)f_i \text{ if $j=0$}\\
		Xk\llbracket X\rrbracket f_i \text{ else.}
	\end{cases}
	\]
	Indeed, the first equality is just the definition of $F^{\vee}$, and we have $F(\pi_i)\subset \pi_{i+1}$ (which follows by an iteration of (\ref{cycle_equation}), see \cite[Lemma 4.43 (i)]{witthaus_meta}). In particular, $(X^{s_i+j}f_{i+1})\circ F\in \pi_i^{\vee}$, i.e.\ there exists a unique $h(X)\in k\llbracket X\rrbracket$ such that $(X^{s_i+j}f_{i+1})\circ F=h(X)f_i$. Since $v_i$ and $v_{i+1}$ are killed by $X$, evaluation at $v_i$ gives
	\[
	h(0)=f_i(h(X)v_i)=(h(X)f_i)(v_i)=f_{i+1}(X^{s_i+j}F(v_i))=\begin{cases}
		c_{i} \text{ if $j=0$}\\
		0 \text{ if $j>0$,}
	\end{cases}
	\]
	where the last equality uses the relation (\ref{cycle_equation}). This proves the claim. In particular, for all $j\geq 0$, $F^{\vee}((1+X)^{-j}X^{s_i}f_{i+1})\in c_i(1+Xk\llbracket X\rrbracket)f_i$. Applying $\varphi$, we obtain
	\[
	\varphi(F^{\vee}((1+X)^{-j}X^{s_i}f_{i+1}))\in c_i(1+X^pk\llbracket X\rrbracket)\varphi(f_i),
	\]
	say it is equal to $c_i(1+X^ph_j(X))\varphi(f_i)$ for some $h_j(X)\in k\llbracket X\rrbracket$.
	For $f=X^{s_i}f_{i+1}$, the formula in Remark \ref{formula} then reads
	\[
	\left(\sum_{j=0}^{p-1} (1+X)^{j} (1+X^ph_j(X))\right) c_i\varphi(f_i)= X^{s_i}f_{i+1}.
	\]
	Putting $Y=X+1$, we have $\sum_{j=0}^{p-1}(1+X)^{j}=(Y^p-1)/(Y-1)=(Y-1)^{p-1}=X^{p-1}$. So the factor in front of $c_i \varphi(f_i)$ lies in $X^{p-1}+(X^p)$, which gives the required containment for $\varphi(f_i)$.
	
	Let now $\gamma\in \Gamma$. We have $\gamma(f_i)\in \pi_i^{\vee}$, say $\gamma(f_i)=h(X)f_i$ for some $h(X)\in k\llbracket X\rrbracket$. Evaluating at $v_i$ shows that $\chi_i(\gamma^{-1})=f_i(\gamma^{-1}(v_i))=h(0)$, so we are done.
\end{proof}

\begin{prop}\label{herzig_prop}
	Let $D\in \phigamma$ with basis $f_1,\ldots,f_n$ such that for all $1\leq i\leq n$,
	\begin{align*}
		&\varphi(f_i)\in d_i(1+Xk \llbracket X\rrbracket)X^{t_i}f_{i+1}\\
		&\gamma(f_i)\in \omega(\gamma)^{b_i}(1+Xk \llbracket X\rrbracket)f_i, \text{ for all $\gamma\in \Gamma$,}
	\end{align*}
	for some $b_i,t_i\in \bZ$ and $d_i\in k^{\times}$. Then $D$ is isomorphic to the $(\varphi,\Gamma)$-module with basis $e_1,\ldots,e_n$ such that
	\begin{align*}
		&\varphi(e_i)=e_{i+1} \text{ for $1\leq i \leq n-1$ and } \varphi(e_n)=d X^{-t(p-1)}e_1\\
		&\gamma(e_i)=\omega(\gamma)^{b_1} \left(\frac{\omega(\gamma)X}{\gamma(X)}\right)^{t\frac{p^{i-1}(p-1)}{p^{n}-1}} e_i \text{ for all $1\leq i\leq n$,}
	\end{align*}
	where\footnote{We will see in the proof that indeed $t\in \bZ$.} $t=-\frac{p^{n-1}t_1+p^{n-2}t_2+\ldots + t_n}{p-1}\in \bZ$ and $d=\prod_{i=1}^n d_i$. In particular, the associated Galois representation $\mathscr{V}(D)$ attached to $D$ via Fontaine's equivalence \ref{fontaine_equiv} is isomorphic to
	\[
	\mathscr{V}(D)\cong \omega^{b_1}\mu_{\lambda} \otimes \Ind^{G_{\bQ_p}}_{G_{\bQ_{p^n}}}(\omega_n^t),
	\]
	where $\lambda\in \bar{k}^{\times}$ with $\lambda^n=d$.
\end{prop}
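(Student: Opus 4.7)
The plan is to construct an explicit isomorphism from $D$ onto the $(\varphi,\Gamma)$-module appearing in Example \ref{example_phi_gamma} with parameters $h=t$ and $\chi=\omega^{b_1}\mu_{\lambda}$ (for $\lambda\in\bar{k}^{\times}$ with $\lambda^n=d$); the identification of $\mathscr{V}(D)$ will then be immediate from the last sentence of that example. The isomorphism is obtained by successive base changes normalizing the $\varphi$-action, after which commutation with $\varphi$ forces the $\Gamma$-action into the predicted shape.

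A preliminary observation is that $t\in\bZ$: comparing the leading $\omega(\gamma)$-exponents in the identity $\varphi\gamma(f_i)=\gamma\varphi(f_i)$ and using $\gamma(X)\in\omega(\gamma)X(1+Xk\llbracket X\rrbracket)$ yields $b_{i+1}\equiv b_i-t_i\pmod{p-1}$; cycling through $i$ gives $t_1+\cdots+t_n\equiv 0\pmod{p-1}$, and since $p\equiv 1\pmod{p-1}$ this forces $p^{n-1}t_1+\cdots+t_n\equiv 0\pmod{p-1}$, so $t\in\bZ$.

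The $\varphi$-action is then normalized in three stages. (a) Replace $f_i$ by $e_i=u_if_i$ with $u_i\in 1+Xk\llbracket X\rrbracket$ chosen to absorb the $(1+Xk\llbracket X\rrbracket)$-ambiguity in $\varphi(f_i)$; taking $u_{i+1}=\varphi(u_i)\cdot(\text{given unit})$ for $i<n$ determines $u_2,\dots,u_n$ from $u_1$, while the closing relation at step $n$ becomes $u_1/\varphi^n(u_1)=w$ for a specified $w\in 1+Xk\llbracket X\rrbracket$, solved by the $X$-adically convergent product $u_1=\prod_{j\ge 0}\varphi^{nj}(w)$. (b) Scale $e_i$ by $X^{s_i}$ for integers $s_i$ satisfying $s_{i+1}=ps_i+t_i$ ($i<n$) and $ps_n+t_n-s_1=-t(p-1)$; this system is consistent with $s_1=0$ precisely because of the integrality of $t$. (c) Rescale by scalars $\alpha_i\in k^{\times}$ with $\alpha_{i+1}=\alpha_id_i$ to arrive at $\varphi(e_i)=e_{i+1}$ for $i<n$ and $\varphi(e_n)=dX^{-t(p-1)}e_1$.

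Once $\varphi$ is normalized, $\gamma\varphi=\varphi\gamma$ gives $\gamma(e_i)=\varphi^{i-1}(g(X))e_i$ with $g(X):=\gamma(e_1)/e_1\in k\llbracket X\rrbracket^{\times}$. Tracking residues through all three base changes (using $b_i+s_i\equiv b_1\pmod{p-1}$, which is preserved by the recursion) yields $g(X)\equiv\omega(\gamma)^{b_1}\pmod X$, while applying $\gamma\varphi=\varphi\gamma$ to $e_n$ produces the functional equation $g(X)/\varphi^n(g(X))=(\gamma(X)/X)^{t(p-1)}$. The element $g(X)=\omega(\gamma)^{b_1}(\omega(\gamma)X/\gamma(X))^{t(p-1)/(p^n-1)}$ satisfies both conditions (the exponent lies in $\bZ_p$ because $p\nmid p^n-1$), and uniqueness follows from the fact that the kernel of $h\mapsto h/\varphi^n(h)$ on $k\llbracket X\rrbracket^{\times}$ is $k^{\times}$, on which the residue mod $X$ is faithful. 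Applying $\varphi^{i-1}$ and using $\varphi(\omega(\gamma)X/\gamma(X))=(\omega(\gamma)X/\gamma(X))^p$ (which holds since $\omega(\gamma)^{p-1}=1$) reproduces the formula asserted for $\gamma(e_i)$, identifying $D$ with the module of Example \ref{example_phi_gamma}; the description of $\mathscr{V}(D)$ then follows. I expect the main subtleties to be the integrality of $t$ and the convergence of the cyclic unit-normalization in step (a); the remainder is disciplined bookkeeping.
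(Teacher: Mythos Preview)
Your argument is correct and follows essentially the same route as the paper: normalize the $\varphi$-action by successive base changes (the paper collapses your steps (b) and (c) into the single definition $e_{i+1}:=\varphi(e_i)$, and deduces $t\in\bZ$ from the constant term of the functional equation rather than upfront), then pin down the $\Gamma$-action by solving the resulting functional equation via uniqueness of $(p^n-1)$-th roots in $1+Xk\llbracket X\rrbracket$. One small point you glossed over: Example~\ref{example_phi_gamma} is stated only for $h\ge 0$, so to invoke it you should note (as the paper does) that replacing $t$ by $t+(p^n-1)$ via the basis change $e_i\mapsto X^{-p^{i-1}(p-1)}e_i$ reduces to this case without changing $\mathscr{V}(D)$, since $\omega_n^{p^n-1}=1$.
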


\begin{proof}
	Write $\varphi(f_i)=d_i g_i(X)X^{t_i}f_{i+1}$ for some $g_i(X)\in 1+Xk\llbracket X\rrbracket$. We want to get rid of the $g_i(X)$ by replacing $f_i$ by $f_i'=h_i(X)f_i$ for suitable $h_i(X)\in 1+Xk\llbracket X\rrbracket$. The condition $\varphi(f_i')=d_i X^{t_i}f_{i+1}'$ is equivalent to $h_{i+1}(X)=h_i(X^p)g_i(X)$. This has the unique solution $h_i(X)=\prod_{j=1}^{\infty} g_{i-j}(X^{p^{j-1}})$ in $1+Xk\llbracket X\rrbracket$, see also the end of the proof of \cite[Proposition 3.2.4.2]{herzig_co}. Since the $f_i'$ still satisfy the same condition as the $f_i$, we may from now on assume that $\varphi(f_i)=d_i X^{t_i}f_{i+1}$ for all $1\leq i\leq n$.
	
	Put now $e_1:=f_1$ and $e_{i+1}:=\varphi(e_i)$ for $1\leq i \leq n-1$. Then $\varphi(e_n)=d X^{\sum} e_1$, where $d=\prod_{i=1}^n d_i$ and $\sum = \sum_{i=1}^{n} p^{n-i}t_i$. We still have $\gamma(e_i)\in \omega(\gamma)^{b_i}(1+Xk \llbracket X\rrbracket)e_i$ for all $1\leq i\leq n$ and $\gamma\in \Gamma$. Fix $\gamma\in \Gamma$ and write $\gamma(e_1)=\omega(\gamma)^{b_1} h_1(X) e_1$ for some $h_1(X)\in 1+Xk\llbracket X\rrbracket$. Since $\varphi$ and $\gamma$ commute, one obtains inductively that 
	\[
	\omega(\gamma)^{\Sigma}\left(\frac{\gamma(X)}{\omega(\gamma)X}\right)^{\Sigma} = h_1(X^{p^n})h_1(X)^{-1}.
	\]
	Since $h_1(X)$ is a $1$-unit, we deduce that $\omega(\gamma)^{\Sigma}=1$. This holds for every $\gamma\in \Gamma$, so we may write $\Sigma = -t(p-1)$ for some $t\in \bZ$. Now $\left(\frac{\gamma(X)}{\omega(\gamma)X}\right)^{\Sigma}$ has a unique $(p^n-1)$-th root in $1+X\bF_p\llbracket X\rrbracket$, say $g(X)$, and we deduce that $h(X^{p^n})=h(X)$ for $h(X)=h_1(X)g(X)^{-1}$, i.e.\ $h(X)$ is constant equal to $1$. This proves that $h_1(X)=\left(\frac{\omega(\gamma)X}{\gamma(X)}\right)^{t\frac{(p-1)}{p^{n}-1}}$ and thus the correct formula for $\gamma(e_1)$, which also implies the correct formulas for $\gamma(e_i)$, for all $1\leq i\leq n$.
	
	The description of the Galois representation $\mathscr{V}(D)$ now follows from Example \ref{example_phi_gamma}. Indeed, since $\omega_n^{p^n-1}=1$, we may replace $t$ by $t+p^{n}-1$ on the Galois side, which amounts to replacing the basis $\{e_i\}$ by $\{X^{-p^{i-1}(p-1)}e_i\}$ on the $(\varphi,\Gamma)$-module side. This way we may assume that $t\geq 0$, in which case the cited example applies.
\end{proof}

\begin{cor}\label{Galois_calculate}
	The Galois representation $\mathscr{V}(\pi_M^{\vee}[1/X])^{\vee}\in \Mod^{\fin}_{\GQp}(k)$ is isomorphic to
	\[
	\omega^{a_1-1}\mu_{\lambda} \otimes \Ind^{G_{\bQ_p}}_{G_{\bQ_{p^n}}}(\omega_n^s)
	\]
	where $\lambda\in \bar{k}$ such that $\lambda^n=\prod_{i=1}^n c_i$, $s=\frac{p^{n-1}s_1+p^{n-2}s_2+\ldots+s_n}{p-1}\in \bZ$ and $a_1\in \bZ$ is such that $\chi_1(\gamma)=\omega(\gamma)^{a_1}$ for all $\gamma\in \Gamma$. Moreover, if the condition formulated in part (iii) of Lemma \ref{basic_lemma} is satisfied, then this is an irreducible Galois representation.
\end{cor}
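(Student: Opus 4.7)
The plan is to apply Proposition \ref{herzig_prop} to the \'{e}tale $(\varphi,\Gamma)$-module $D = \pi_M^{\vee}[1/X]$ equipped with the $k\llbracket X\rrbracket$-basis $\{f_1,\ldots,f_n\}$ constructed above Lemma \ref{phi_Gamma_description}, and then to dualize the resulting Galois representation. Lemma \ref{phi_Gamma_description} directly verifies the hypotheses of Proposition \ref{herzig_prop} with parameters $d_i = c_i^{-1}$, $t_i = s_i-(p-1)$, and $b_i = -a_i$, where $\chi_i=\omega^{a_i}$ on $\Gamma$. Inserting these into the formulas of Proposition \ref{herzig_prop} yields $d = \prod_{i=1}^n c_i^{-1}$ and
\[
t \;=\; -\frac{1}{p-1}\sum_{i=1}^n p^{n-i}\big(s_i-(p-1)\big) \;=\; -s + \frac{p^n-1}{p-1},
\]
so that $\mathscr{V}(D)\cong \omega^{-a_1}\mu_{\lambda_0}\otimes \Ind^{\mathscr{G}_{\bQ_p}}_{\mathscr{G}_{\bQ_{p^n}}}(\omega_n^{t})$ with $\lambda_0\in \bar{k}^{\times}$ satisfying $\lambda_0^n = \prod_i c_i^{-1}$.

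The cyclotomic shift visible in the statement comes from the identity $\omega_n^{(p^n-1)/(p-1)} = \omega|_{\mathscr{G}_{\bQ_{p^n}}}$, which follows immediately from the definition of Serre's fundamental characters with respect to the uniformizer $-p$, using that $((-p)^{1/(p^n-1)})^{(p^n-1)/(p-1)}$ is a $(p-1)$-st root of $-p$. Combined with the projection formula, this gives $\Ind^{\mathscr{G}_{\bQ_p}}_{\mathscr{G}_{\bQ_{p^n}}}(\omega_n^{t})\cong \omega\otimes \Ind^{\mathscr{G}_{\bQ_p}}_{\mathscr{G}_{\bQ_{p^n}}}(\omega_n^{-s})$, and hence $\mathscr{V}(D)\cong \omega^{1-a_1}\mu_{\lambda_0}\otimes \Ind^{\mathscr{G}_{\bQ_p}}_{\mathscr{G}_{\bQ_{p^n}}}(\omega_n^{-s})$. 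Since linear dualization commutes with induction from a finite-index closed subgroup and inverts characters, taking the $k$-linear dual yields $\mathscr{V}(D)^{\vee} \cong \omega^{a_1-1}\mu_{\lambda}\otimes \Ind^{\mathscr{G}_{\bQ_p}}_{\mathscr{G}_{\bQ_{p^n}}}(\omega_n^{s})$ with $\lambda := \lambda_0^{-1}$ satisfying $\lambda^n=\prod_i c_i$, as asserted.

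For the final irreducibility claim, under the condition in Lemma \ref{basic_lemma} (iii) the $P^+$-representation $\pi_M$ is irreducible; any proper nonzero sub-$(\varphi,\Gamma)$-module of $\pi_M^{\vee}[1/X]$ would Pontryagin-dualize to a proper nonzero $P^+$-quotient of $\pi_M$, contradicting irreducibility, so $\pi_M^{\vee}[1/X]$ is irreducible in $\phigamma$. Fontaine's equivalence then produces an irreducible Galois representation $\mathscr{V}(D)$, and linear dualization preserves irreducibility. The main delicate point I expect is tracking the cyclotomic twist in the middle paragraph: without the identity $\omega_n^{(p^n-1)/(p-1)} = \omega|_{\mathscr{G}_{\bQ_{p^n}}}$, the exponent $t$ produced by Proposition \ref{herzig_prop} does not reduce to $-s$ modulo the full period $p^n-1$ of $\omega_n$, and one would otherwise miss precisely the shift from $\omega^{a_1}$ to $\omega^{a_1-1}$; the remaining steps are essentially bookkeeping.
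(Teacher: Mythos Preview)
Your explicit computation is correct and is exactly the argument the paper has in mind: apply Lemma \ref{phi_Gamma_description} to get the parameters $d_i=c_i^{-1}$, $t_i=s_i-(p-1)$, $b_i=-a_i$, feed these into Proposition \ref{herzig_prop}, and then use $\omega_n^{(p^n-1)/(p-1)}=\omega|_{\mathscr{G}_{\bQ_{p^n}}}$ together with dualization to land on the stated formula. The paper compresses all of this into one sentence, so your expanded bookkeeping is helpful and accurate.

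The irreducibility argument, however, has a genuine gap. You write that a proper nonzero sub-$(\varphi,\Gamma)$-module $D_1\subset D=\pi_M^{\vee}[1/X]$ ``would Pontryagin-dualize to a proper nonzero $P^+$-quotient of $\pi_M$''. This is not literally true: $D$ is a $k((X))$-vector space, and its Pontryagin dual is not $\pi_M$; nothing in the construction of $\mathbf{D}$ produces a $P^+$-equivariant map $\pi_M\to D_1^{\vee}$ from the inclusion $D_1\hookrightarrow D$ alone. To get back to $\pi_M$ you must first pass to a $k\llbracket X\rrbracket$-lattice inside $\pi_M^{\vee}$, and you must know that this lattice is stable under the operator on $\pi_M^{\vee}$ that corresponds to $F$ on $\pi_M$.

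The paper handles this via the $(\psi,\Gamma)$-module machinery of Section \ref{section_psi-gamma}. One first checks (using Remark \ref{formula}) that the inclusion $\pi_M^{\vee}\hookrightarrow D$ intertwines $F^{\vee}$ with $\psi$; in particular $\pi_M^{\vee}$ is a $\psi$-stable lattice, so the minimal $\psi$-stable lattice $D^{\natural}$ sits inside $\pi_M^{\vee}$. Irreducibility of $\pi_M$ as a $P^+$-representation then forces $D^{\natural}=\pi_M^{\vee}$. For a nonzero $(\varphi,\Gamma)$-submodule $D_1\subset D$, functoriality of $(-)^{\natural}$ gives $D_1^{\natural}\subset D^{\natural}=\pi_M^{\vee}$, and the same irreducibility argument yields $D_1^{\natural}=\pi_M^{\vee}$, hence $D_1=D_1^{\natural}[1/X]=D$. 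Your sketch is missing exactly this bridge between $k((X))$-subspaces of $D$ and $k\llbracket X\rrbracket[F^{\vee},\Gamma]$-submodules of $\pi_M^{\vee}$; without it the Pontryagin-duality step does not go through.
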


\begin{proof}
	Everything except the irreducibility statement follows from Lemma \ref{phi_Gamma_description} and Proposition \ref{herzig_prop}. Here note that we dualize the Galois representation and we have $\omega_n^{(p^n-1)/(p-1)}=\omega$. Assume now that the condition of Lemma \ref{basic_lemma} (iii) is satisfied, so that $\pi_M$ is an irreducible $P^+$-representation. Put $D=\pi_M^{\vee}[1/X]$. The canonical map $\pi_M^{\vee}\hookrightarrow D$ is compatible with the action of $F^{\vee}$ on the source and of $\psi$ on the target (as follows for example from Remark \ref{formula}). We then have $D^{\natural}\subset \pi_M^{\vee}$, which must be an equality by the irreducibility of $\pi_M$ as a $P^+$-representation. If now $D_1\subset D$ is a $(\varphi,\Gamma)$-submodule, then $D_1^{\natural}\subset D^{\natural}=\pi_M^{\vee}$ by functoriality of $(-)^{\natural}$, Remark \ref{sharp_functorial}, so once again by irreducibility $D_1^{\natural}=D^{\natural}$ and thus, by inverting $X$, $D_1=D$. This shows that the $(\varphi,\Gamma)$-module $D$ is irreducible.
\end{proof}

\subsection{The extended functor} For $\pi\in \Mod^{\fadm}_{\wt{G}}(k)$, we now check that the ind-object $\mathbf{D}(\pi)\in \Ind(\phigamma)$ is compact. Recall from Definition \ref{def_G(pi)} that to $\pi$ we have associated the filtered set $\mathcal{G}(\pi)$ consisting of the finite dimensional $\wt{K}\wt{Z}$-stable subspaces of $\pi$ generating the latter as a $\wt{G}$-representation. On the other hand, we may consider the filtered set $\mathcal{M}(\pi)$, Definition \ref{def_M(pi)}. Given some element $M\in \mathcal{G}(\pi)$, Proposition \ref{prop_finiteness} implies that the $k\llbracket X\rrbracket[F,\Gamma]$-submodule $\pi_M\subset \pi$ of $\pi$ generated by $M$ lies in $\mathcal{M}(\pi)$. The resulting map $\mathcal{G}(\pi)\to \mathcal{M}(\pi)$ preserves the partial order and thus induces a morphism
\begin{equation}\label{comp_iso}
	\underset{M\in \mathcal{G}(\pi)}{``{\varinjlim}"} (\pi_M^{\vee}[1/X])'\to \underset{\tau\in \mathcal{M}(\pi)}{``{\varinjlim}"} (\tau^{\vee}[1/X])'=\mathbf{D}(\pi)
\end{equation}
in the category $\Ind(\phigamma)$.

\begin{prop}\label{stabilize} Let $\tau_1,\tau_2\in \mathcal{M}(\pi)$ with $\tau_1\subset \tau_2$, and assume that $\tau_1[F^{-1}]=\pi$. The transition map $(\tau_1^{\vee}[1/X])'\xrightarrow{\cong} (\tau_2^{\vee}[1/X])'$ is an isomorphism. In particular $\mathbf{D}(\pi)=(\tau_1^{\vee}[1/X])'\in \phigamma$ is a compact object of $\Ind(\phigamma)$. Moreover, any finitely generated $k\llbracket X\rrbracket[F]$-submodule of $\pi$ has finite dimensional $X$-torsion.
\end{prop}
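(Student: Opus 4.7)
The plan is to reduce the first claim to a statement about $\tau_2/\tau_1$. Pontryagin-dualising the short exact sequence $0 \to \tau_1 \to \tau_2 \to \tau_2/\tau_1 \to 0$ yields the exact sequence $0 \to (\tau_2/\tau_1)^{\vee} \to \tau_2^{\vee} \to \tau_1^{\vee} \to 0$; if the leftmost term is killed by some power of $X$, inverting $X$ produces an isomorphism $\tau_2^{\vee}[1/X] \xrightarrow{\cong} \tau_1^{\vee}[1/X]$ in $\phigamma$, whose dual inside the same category is the stated transition isomorphism $(\tau_1^{\vee}[1/X])' \xrightarrow{\cong} (\tau_2^{\vee}[1/X])'$. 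The core task is therefore to show that $\tau_2/\tau_1$ is finite-dimensional over $k$.

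To this end, pick finitely many generators $v_1, \ldots, v_r$ of $\tau_2$ as a $k\llbracket X\rrbracket[F, \Gamma]$-module. Since $\tau_1[F^{-1}] = \pi \supseteq \tau_2$, for each $i$ there is some $n_i \geq 0$ with $F^{n_i} v_i \in \tau_1$; set $N = \max_i n_i$. Using the commutation relations $F f(X) = f(X^p) F$ and $F \gamma = \gamma F$, the operator $F^N$ annihilates $\tau_2/\tau_1$. Hence $\tau_2/\tau_1$ is generated as a $k\llbracket X\rrbracket[\Gamma]$-module by the finite set $\{F^j \bar{v}_i \noloc 0 \leq j < N,\ 1 \leq i \leq r\}$. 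Smoothness gives each of these classes an open $\Gamma$-stabiliser, and hence by compactness of $\Gamma$ a finite $\Gamma$-orbit with finite-dimensional $k$-span. Consequently $\tau_2/\tau_1$ is finitely generated over $k\llbracket X\rrbracket$; combined with the $X$-power torsion coming from smoothness, the structure theorem for finitely generated modules over the discrete valuation ring $k\llbracket X\rrbracket$ forces it to be of finite length, hence finite-dimensional over $k$ and killed by some $X^m$.

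For the compactness of $\mathbf{D}(\pi)$, pick $M \in \mathcal{G}(\pi)$ and set $\tau_0 = \pi_M \in \mathcal{M}(\pi)$, which is well-defined thanks to Proposition \ref{prop_finiteness}. The Iwasawa-type decomposition $\wt{G} = \wt{K}\wt{Z} \cdot P$ implies, by taking inverses, also $\wt{G} = P \cdot \wt{K}\wt{Z}$; together with the $\wt{K}\wt{Z}$-stability of $M$ this gives $\pi = \wt{G} \cdot M = P \cdot M$. Since each element of $P$ can be written as $F^{-k} p^+$ for some $k \geq 0$ and $p^+ \in P^+$, we conclude $\tau_0[F^{-1}] = \pi$. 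For an arbitrary $\tau \in \mathcal{M}(\pi)$, the sum $\tau + \tau_0$ again lies in $\mathcal{M}(\pi)$ by \cite[Lemme 2.1]{Breuil_phi} and contains $\tau_0$, so the first part applied to $\tau_0 \subset \tau + \tau_0$ forces the transition morphism $(\tau_0^{\vee}[1/X])' \to ((\tau_0 + \tau)^{\vee}[1/X])'$ to be an isomorphism. The ind-system defining $\mathbf{D}(\pi)$ is thus essentially constant, equal to $(\tau_0^{\vee}[1/X])' \in \phigamma$, which is compact in $\Ind(\phigamma)$.

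For the last assertion, let $N \subseteq \pi$ be finitely generated over $k\llbracket X\rrbracket[F]$ with generators $w_1, \ldots, w_s$. By smoothness each $w_i$ has a finite $\Gamma$-orbit, so their combined $k$-span $M' \subset \pi$ is finite-dimensional and $\Gamma$-stable; then $N \subseteq \pi_{M'}$, and Proposition \ref{prop_finiteness} supplies finite-dimensionality of $\pi_{M'}[X]$, whence of $N[X] \subseteq \pi_{M'}[X]$. The main obstacle is the finite-dimensionality of $\tau_2/\tau_1$ in the second paragraph: juggling the commutation relations among $X$, $F$, and $\Gamma$ simultaneously and translating smoothness into $X$-power torsion requires some care, while the Iwasawa-style identity used for compactness is comparatively straightforward.
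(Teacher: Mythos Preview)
Your argument is correct and the core step---showing $\tau_2/\tau_1$ is finite-dimensional over $k$ by first killing it with $F^N$ and then using smoothness to reduce to finitely generated $k\llbracket X\rrbracket$-torsion---is exactly what the paper does. Two small points of comparison are worth noting.

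For compactness, you construct a fresh $\tau_0=\pi_M$ via $\mathcal{G}(\pi)$ and the Iwasawa decomposition. This is fine, but unnecessary: the hypothesis already hands you $\tau_1$ with $\tau_1[F^{-1}]=\pi$, and the first part applied to $\tau_1\subset \tau_1+\tau$ for arbitrary $\tau\in\mathcal{M}(\pi)$ shows the ind-system is essentially constant at $(\tau_1^{\vee}[1/X])'$. Your detour through $\mathcal{G}(\pi)$ really belongs to the subsequent corollary, where one must first \emph{produce} such a $\tau_1$.

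For the final assertion, your route through Proposition~\ref{prop_finiteness} is correct but leans on the ambient $\wt{G}$-admissibility. The paper instead recycles the first paragraph: for any finitely generated $k\llbracket X\rrbracket[F]$-submodule $N\subset\pi=\tau_1[F^{-1}]$, choose $m$ with $F^mN\subset\tau_1$; since $F$ is injective on $\pi$ and $F^m$ sends $X$-torsion into $X^{p^m}$-torsion, one gets an embedding $N[X]\hookrightarrow\tau_1[X^{p^m}]$, and the latter is finite-dimensional because $\tau_1[X]$ is. This argument uses only the $P$-smoothness and the hypothesis $\tau_1\in\mathcal{M}(\pi)$, so it is more self-contained and slightly sharper in scope.
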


\begin{proof}
	This is similar to the proof of \cite[Proposition 4.4]{emerton_coherent_rings}: If $\tau_2$ is an arbitrary finitely generated $k\llbracket X\rrbracket[F]$-submodule of $\pi=\tau_1[F^{-1}]$ (not necessarily an element of $\mathcal{M}(\pi)$), then we may choose $N\gg 0$ such that $F^N\tau_2\subset \tau_1$. In particular, the injective operator $F^N$ maps $\tau_2[X]$ into $\tau_1[X^{p^N}]$ (which is finite dimensional since $\tau_1\in \mathcal{M}(\pi)$). This already proves the final assertion. For the first part, assume now that additionally $\tau_2\in \mathcal{M}(\pi)$ and $\tau_1\subset \tau_2$. Then the quotient $\tau_2/\tau_1$ is finitely generated over $k\llbracket X\rrbracket$ (as $\tau_2$ is finitely generated over $k\llbracket X\rrbracket[F]$ and the quotient is killed by $F^N$). But it is also torsion by smoothness, so it must be finite dimensional and is therefore killed when taking Pontryagin dual and inverting $X$, i.e.\ $\tau_1^{\vee}[1/X]=\tau_2^{\vee}[1/X]$, proving the first part.
\end{proof}

\begin{cor}\label{cor_compact}
	The morphism (\ref{comp_iso}) is an isomorphism, and all the transition maps in the inverse system $\{\pi_M^{\vee}[1/X]\}_{M\in \mathcal{G}(\pi)}$ are isomorphisms. In particular, $\mathbf{D}(\pi)\in \phigamma$ is a compact object of $\Ind(\phigamma)$. Moreover, any finitely generated $k\llbracket X\rrbracket [F]$-submodule of $\pi$ has finite dimensional $X$-torsion.
\end{cor}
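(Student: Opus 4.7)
The plan is to apply Proposition \ref{stabilize} to a distinguished cofinal family in $\mathcal{M}(\pi)$, namely the subobjects $\pi_M$ for $M \in \mathcal{G}(\pi)$. Concretely, I would first verify that $\pi_M \in \mathcal{M}(\pi)$ for every $M \in \mathcal{G}(\pi)$: Proposition \ref{prop_finiteness} (applied to $\pi$, viewed as an object of $\Mod^{\adm}_{\wt{G}}(k)$, and to the finite dimensional $\Gamma$-stable subspace $M$) says precisely that $\pi_M$ is finitely generated over $k\llbracket X\rrbracket[F]$ with finite dimensional $X$-torsion. Since $\pi_M$ is automatically $\Gamma$-stable, this also makes it finitely generated over $k\llbracket X\rrbracket[F,\Gamma]$, hence an element of $\mathcal{M}(\pi)$.

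The key step, and the only non-formal one, is to show that $\pi_M[F^{-1}] = \pi$ so that the hypothesis of Proposition \ref{stabilize} is met. This is an Iwasawa-style argument: we have $\wt{G} = \wt{P}\wt{K}$, because $G = PK$ lifts to the central extension. Since $M$ is $\wt{K}\wt{Z}$-stable, the $\wt{G}$-subrepresentation it generates coincides with the $\wt{P}$-subrepresentation it generates; by assumption the former is all of $\pi$. Using the equality $\wt{P} = P\times \mu_2$ of Proposition \ref{PS}(i) together with the fact that $\mu_2$ acts by scalars on $\pi$, this $\wt{P}$-subrepresentation equals the $P$-subrepresentation of $\pi$ generated by $M$. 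Finally, $P$ is generated as a monoid by $P^+$ and $F^{-1}$ (inverting $F$ in the group ring $k[P^+]$ yields $k[P]$, as $F^{-n}$-conjugation spreads $\begin{mat}1 & \bZ_p\\ 0 & 1\end{mat}$ to the full unipotent radical $\begin{mat}1 & \bQ_p\\ 0 & 1\end{mat}$ while recovering the full torus $\bQ_p^{\times}$), so the $P$-subrepresentation generated by $M$ is $\pi_M[F^{-1}]$, which must therefore equal $\pi$.

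With these two facts in hand, everything follows from Proposition \ref{stabilize}. For any inclusion $M_1 \subset M_2$ in $\mathcal{G}(\pi)$ we have $\pi_{M_1}\subset \pi_{M_2}$ in $\mathcal{M}(\pi)$ with $\pi_{M_1}[F^{-1}]=\pi$, so the transition map on the duals is an isomorphism; in particular the inverse system $\{\pi_M^{\vee}[1/X]\}_{M\in \mathcal{G}(\pi)}$ stabilizes. To identify its colimit with $\mathbf{D}(\pi)$, I would run a cofinality argument: fix some $M_0 \in \mathcal{G}(\pi)$ (which exists by the remark after Definition \ref{def_G(pi)}), and for an arbitrary $\tau \in \mathcal{M}(\pi)$ form the sum $\tau + \pi_{M_0} \in \mathcal{M}(\pi)$ (closure under sums is \cite[Lemme 2.1]{Breuil_phi}); since $(\tau+\pi_{M_0})[F^{-1}] \supseteq \pi_{M_0}[F^{-1}] = \pi$, Proposition \ref{stabilize} gives $(\pi_{M_0}^{\vee}[1/X])' \xrightarrow{\cong} ((\tau+\pi_{M_0})^{\vee}[1/X])'$, producing a compatible system of maps $(\tau^{\vee}[1/X])' \to (\pi_{M_0}^{\vee}[1/X])'$ that exhibit $(\pi_{M_0}^{\vee}[1/X])'$ as the colimit $\mathbf{D}(\pi)$. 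Thus (\ref{comp_iso}) is an isomorphism and $\mathbf{D}(\pi)$ is compact. The final assertion on $X$-torsion is just the last sentence of Proposition \ref{stabilize} applied to our $\pi_M$. The only real content here is the hypothesis-checking; the hard work lives in Propositions \ref{prop_finiteness} and \ref{stabilize}, which have already been proved.
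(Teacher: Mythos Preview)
Your approach matches the paper's one-line proof, which invokes the decomposition $\wt{G} = P\wt{K}\wt{Z}$ to get $\pi_M[F^{-1}] = \pi$ and then defers everything to Proposition \ref{stabilize}. One small correction: the claim $G = PK$ is false (e.g.\ the scalar matrix $p\cdot I$ does not lie in $PK$, since any product $\begin{mat}a & b\\ 0 & 1\end{mat}k$ with $k\in K$ has lower-right entry in $\bZ_p^{\times}$); the decomposition you want is $G = PKZ$, lifting to $\wt{G} = P\wt{K}\wt{Z}$. This is harmless for your argument since $M$ is $\wt{K}\wt{Z}$-stable, not merely $\wt{K}$-stable, so the rest goes through verbatim once you replace $\wt{P}\wt{K}$ by $P\wt{K}\wt{Z}$.
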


\begin{proof}
	The decomposition $\wt{G}=P\wt{K}\wt{Z}$ implies that $\pi_M[F^{-1}]=\pi$ for $M\in \mathcal{G}(\pi)$.
\end{proof}

\begin{remark}
	The composition $\Mod^{\fadm}_{\wt{G}}(k)\xrightarrow{(-)|_P} \Mod^{\sm}_P(k)\xrightarrow{\mathbf{D}} \Ind(\phigamma)$ may thus be viewed as valued in the category $\phigamma$, and the resulting functor, still denoted by $\mathbf{D}$, recovers Colmez's functor when restricted to the full subcategory $\Mod^{\fadm}_G(k)$, see \cite[Proposition 3.2 (iii)]{Breuil_phi}.
\end{remark}

\section{Metaplectic Montr\'{e}al functor} In this section, we study the restriction of the functor $\mathbf{D}$ (or equivalently $\mathbf{V}$) to the full subcategory $\Mod^{\fadm}_{\wt{G},\iota}(k)$ of genuine objects. As we will soon see, the pullback of the metaplectic cover to the center $Z\cong \bQ_p^{\times}$ of $G$ defines an interesting action on this category. Translating this action over to the Galois side and passing to the category of equivariant objects, we obtain so-called metaplectic Galois representations and the previously extended functor induces the metaplectic Montr\'{e}al functor.

\subsection{Twist-equivariant $(\varphi,\Gamma)$-modules} 
Throughout this section we fix a group $\Omega$. We remind the reader what it means for $\Omega$ to act on a category and what the corresponding category of $\Omega$-equivariant objects is.

\begin{definition}[{\cite[2.1, 2.4]{shinder}}]\label{def_action_category} Let $\mathcal{C}$ be a catgory. 
	
	\begin{enumerate}
		\item[{\rm (i)}] A \textit{group action of $\Omega$ on $\mathcal{C}$} consists of the following data:
		\begin{itemize}
			\item for each $g\in \Omega$, an autoequivalence $A_g\colon \mathcal{C}\xrightarrow{\sim} \mathcal{C}$
			\item for each pair $g,h\in \Omega$, a natural isomorphism $\eta_{g,h}\colon A_gA_h \Rightarrow A_{gh}$
		\end{itemize}
		such that the diagram
		\[
		\xymatrix{
			A_gA_hA_k \ar@{=>}[r]^{A_g{\eta}_{h,k}} \ar@{=>}[d]_{\eta_{g,h}A_k} & A_g A_{hk}\ar@{=>}[d]^{\eta_{g,hk}}\\
			A_{gh}A_k \ar@{=>}[r]_{\eta_{gh,k}} & A_{ghk}
		}
		\]
		commutes for all elements $g,h,k\in \Omega$.
		\item[{\rm (ii)}] A \textit{$\Omega$-equivariant object} is a pair $(D, \{\theta_g \colon D\xrightarrow{\sim} A_g(D)\}_{g\in \Omega})$ consisting of an object $D$ of $\mathcal{C}$ and a collection of isomorphisms $\theta_g\colon D\xrightarrow{\sim} A_g(D)$, for $g\in \Omega$, such that the diagram
		\[
		\xymatrix{
			D\ar[r]^{\theta_g} \ar[d]_{\theta_{gh}} & A_g(D)\ar[d]^{A_g\theta_h}\\
			A_{gh}(D) & A_g(A_h(D))\ar[l]^{\eta_{g,h}(D)}
		}
		\]
		commutes for all $g,h\in \Omega$.
		
		The $\Omega$-equivariant objects form in an evident way a category $\mathcal{C}^{\Omega}$.
	\end{enumerate}
\end{definition}

Recall from Definition \ref{phi_gamma_twist} that for a $(\varphi,\Gamma)$-module $D$ and a smooth character $\chi\colon \bQ_p^{\times}\to k^{\times}$, we may form the twisted $(\varphi,\Gamma)$-module $D\otimes \chi$. This construction defines an action of the abelian group $\Hom^{\operatorname{cts}}(\bQ_p^{\times},k^{\times})$ on the category $\phigamma$ by taking $A_{\chi}=-\otimes \chi$ and $\eta_{\chi,\psi}=\operatorname{id}$ in Definition \ref{def_action_category}. In particular, given a group homomorphism
\[
\Omega\to \Hom^{\operatorname{cts}}(\bQ_p^{\times},k^{\times}), \hspace{0.1cm}g\mapsto \chi_g,
\]
we may consider the category $\left(\phigamma\right)^{\Omega}$ of $\Omega$-equivariant $(\varphi,\Gamma)$-modules. Its objects are pairs $(D,\theta)$ consisting of a $(\varphi,\Gamma)$-module $D\in \phigamma$ and an action $\theta\colon \Omega\to \Aut_{k((X))}(D)$ of $\Omega$ on the underlying $k((X))$-vector space of $D$ with $\theta(g)\colon D\cong D\otimes \chi_g$ being an isomorphism of $(\varphi,\Gamma)$-modules.

\subsubsection{Frobenius reciprocity}\label{section_Frobenius} Assume now that $\Delta\subset \Omega$ is a subgroup of finite index, which we assume to be normal for convenience. Consider the restriction functor
\begin{equation}\label{res_functor}
	(-)|_{\Delta}\colon \left(\phigamma\right)^{\Omega}\to \left(\phigamma\right)^{\Delta},\hspace{0.2cm} (D,\theta)\mapsto (D,\theta|_{\Delta}).
\end{equation}
We define a left adjoint by 
\[
\Ind^{\Omega}_{\Delta}((D,\theta))=\left\{f\colon \Omega\to D : f(hg)=\theta(h)f(g) \text{ for all $g\in \Omega$, $h\in \Delta$}\right\}
\]
with $k((X))$-vector space structure induced by that on $D$ and $\Omega$-action defined by $(g.f)(g')=f(g'g)$, i.e.\ so far this is simply the usual induction of the representation $\theta\colon \Delta\to \Aut_{k((X))}(D)$. We endow it with a $(\varphi,\Gamma)$-action by
\begin{itemize}
	\item $(\varphi.f)(g)=\chi_g(p)\varphi(f(g))$ for all $g\in \Omega$
	\item $(\gamma.f)(g)=\chi_g(\gamma)\gamma(f(g))$ for all $g\in \Omega$, for all $\gamma\in \Gamma$.
\end{itemize}

\begin{prop}
	\begin{enumerate}
		\item[{\rm (i)}] The $(\varphi,\Gamma)$-action on $\Ind^{\Omega}_{\Delta}((D,\theta))$ is well-defined and yields a $\Omega$-equivariant object in $\phigamma$.
		\item[{\rm (ii)}] The resulting functor 
		\[
		\Ind^{\Omega}_{\Delta}(-)\colon \left(\phigamma\right)^{\Delta}\to \left(\phigamma\right)^{\Omega}
		\]
		is a left adjoint of the restriction functor (\ref{res_functor}).
	\end{enumerate}
\end{prop}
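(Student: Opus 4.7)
For part (i), I would first check that the formulas actually define elements of $\Ind^{\Omega}_{\Delta}((D,\theta))$: both $\varphi.f$ and $\gamma.f$ must still satisfy the cocycle condition $F(hg)=\theta(h)F(g)$ for $h\in\Delta$, $g\in\Omega$. Two ingredients go in: (a) that $g\mapsto \chi_g$ is a group homomorphism, so $\chi_{hg}(p)=\chi_h(p)\chi_g(p)$ and likewise for $\gamma\in \Gamma$; and (b) the $(\varphi,\Gamma)$-equivariance of $\theta(h)\colon D\cong D\otimes \chi_h$, which on underlying $k((X))$-vector spaces amounts to $\varphi_D\circ \theta(h)=\chi_h(p)^{-1}\theta(h)\circ \varphi_D$ (and analogously for $\Gamma$). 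Combining (a) and (b) produces the cocycle identity after a short calculation. Compatibility of $\varphi$ with $\Gamma$ on $\Ind$ follows formally from the same property on $D$. For étaleness, choose representatives $g_1,\ldots,g_n$ of $\Delta\backslash\Omega$; evaluation at the $g_i$ identifies $\Ind$ with $D^n$ as $k((X))$-vector spaces, and under this identification $\varphi_{\Ind}$ acts coordinatewise as $\chi_{g_i}(p)\varphi_D$, i.e., as the $\varphi$-action on the étale object $D\otimes \chi_{g_i}$, so it linearizes to an isomorphism in each factor. Finally, the $\Omega$-equivariance of this $(\varphi,\Gamma)$-structure, namely that each $g'\in \Omega$ defines an isomorphism $\Ind\cong \Ind\otimes \chi_{g'}$ in $\phigamma$, is a direct computation using the homomorphism property once more.

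For part (ii), I would exhibit the adjunction via an explicit unit. Define $\eta\colon (D,\theta)\to \Ind^{\Omega}_{\Delta}((D,\theta))|_{\Delta}$ by $\eta(v)(h)=\theta(h)v$ for $h\in \Delta$ and $\eta(v)(g)=0$ for $g\notin \Delta$. A direct check using the $(\varphi,\Gamma)$-equivariance of $\theta$ and the defining formulas for the action on $\Ind$ shows that $\eta$ is a morphism in $(\phigamma)^\Delta$. Conversely, given $\Phi\colon (D,\theta)\to (D',\theta')|_{\Delta}$ in $(\phigamma)^\Delta$, choose representatives $g_1=1,g_2,\ldots,g_n$ for $\Delta\backslash\Omega$ and set
\[
\tilde{\Phi}(f)=\sum_{i=1}^n \theta'(g_i)^{-1}\,\Phi(f(g_i)).
\]
Independence of the choice of representatives follows from the $\Delta$-equivariance of $\Phi$; $\Omega$-equivariance of $\tilde{\Phi}$ is obtained by writing $g_ig'=h_i' g_{\sigma(i)}$ in the coset decomposition and using that $\theta'$ is a genuine (strict) group action; and $(\varphi,\Gamma)$-equivariance follows from $\varphi$-equivariance of $\Phi$ combined with the identity $\varphi_{D'}\circ \theta'(g)^{-1}=\chi_g(p)\,\theta'(g)^{-1}\circ \varphi_{D'}$ derived from the twist-equivariance of $\theta'$. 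The identity $\tilde{\Phi}\circ \eta=\Phi$ is immediate since only the $i=1$ term contributes and $\theta'(1)=\operatorname{id}$. For the converse, given $\Psi\in \Hom_{(\phigamma)^\Omega}(\Ind,D')$, one writes $f=\sum_i g_i^{-1}.\eta(f(g_i))$, whereupon the $\Omega$-equivariance of $\Psi$ yields $\Psi(f)=\sum_i \theta'(g_i)^{-1}\Psi(\eta(f(g_i)))$, which is precisely the reconstruction from $\Psi|_\Delta\circ \eta$. This establishes the bijection of Hom sets, and naturality in both variables is built in.

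I expect the main obstacle to be the bookkeeping of the numerous twist factors $\chi_{g_i}(p)$, $\chi_{g_i}(\gamma)$ and the inverses $\theta'(g_i)^{-1}$, especially when verifying $(\varphi,\Gamma)$-equivariance of $\tilde{\Phi}$; all of these cancel correctly thanks to the homomorphism property of $g\mapsto \chi_g$ and the twist-equivariance embedded in the definitions of $\theta$ and $\theta'$.
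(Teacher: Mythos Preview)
Your proposal is correct and follows essentially the same approach as the paper. The paper's own proof is quite terse: for (i) it declares the well-definedness ``straight forward'' and then records the Mackey-type decomposition $\Ind^{\Omega}_{\Delta}((D,\theta))|_{\Delta}\cong \bigoplus_{g\in \Delta\setminus \Omega} (D\otimes \chi_g,\theta^g)$ to conclude finite-dimensionality and \'etaleness (exactly your coset-representative argument), and for (ii) it simply invokes ``the usual Frobenius reciprocity for representations of $\Delta$ and $\Omega$''---which is precisely what you spell out with the explicit unit $\eta$ and the averaging formula for $\tilde{\Phi}$.
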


\begin{proof}
	It is straight forward to check that the $(\varphi,\Gamma)$-action on $\Ind^{\Omega}_{\Delta}((D,\theta))$ is well-defined. Since $\Delta\subset \Omega$ is normal, we have
	\begin{equation}\label{Ind|}
		\Ind^{\Omega}_{\Delta}((D,\theta))|_{\Delta}\cong \bigoplus_{g\in \Delta\setminus \Omega} (D\otimes \chi_g,\theta^g),
	\end{equation}
	where $\theta^g\colon \Delta\xrightarrow{g(-)g^{-1}}\Delta\xrightarrow{\theta}\Aut_{k((X))}(D)=\Aut_{k((X))}(D\otimes \chi_g)$. This space is finite dimensional over $k((X))$ since $\Delta$ has finite index in $\Omega$. In particular, this is a $(\varphi,\Gamma)$-module, which finishes the proof of part (i). Part (ii) follows from the usual Frobenius reciprocity for representations of $\Delta$ and $\Omega$, respectively.
\end{proof}

\begin{remark}\label{remark_action_ind}
	The action of $\Hom^{\operatorname{cts}}(\bQ_p^{\times},k^{\times})$ (hence of $\Omega$) on $\phigamma$ extends to the ind-category $\Ind(\phigamma)$. 
	The functors $\Ind^{\Omega}_{\Delta}(-)$ and $(-)|_{\Delta}$ extend correspondingly and the adjunction of part (ii) in the previous proposition carries over.
\end{remark}

\subsubsection{Transferring twist-actions with a warning}\label{section_transferring}
Using Fontaine's equivalence \ref{fontaine_equiv}, we transfer the action of $\Omega$ on $\phigamma$ to the category $\Mod^{\fin}_{\GQp}(k)$, which then induces an equivalence 
\begin{equation}\label{transfer}
	\left(\phigamma\right)^{\Omega}\simeq \left(\Mod^{\fin}_{\GQp}(k)\right)^{\Omega}
\end{equation}
on $\Omega$-equivariant objects. 

We warn the reader that the transferred action of $\Omega$ on $\Mod^{\fin}_{\GQp}(k)$ is not simply given by twisting by the characters $\chi_g$, $g\in \Omega$, viewed as characters of $\GQp$ via local class field theory: For $D\in \phigamma$ and $\chi\colon \bQ_p^{\times}\to k^{\times}$ is a smooth character, there is an isomorphism $\mathscr{V}(D\otimes \chi)\cong \mathscr{V}(D)\otimes \chi$, it however depends on the choice of a $(p-1)$-th root of $\chi(p)$ and is therefore not canonical. In general, one will not be able to choose the $(p-1)$-th roots of the elements $\chi_g(p)$ for $g\in \Omega$ in a multiplicatively compatible way.

\subsection{Metaplectic Galois representations} We now take $\Omega=\wt{Z}\cong \wt{\bQ_p^{\times}}$ to be the pullback of the metaplectic cover to the center $Z\cong \bQ_p^{\times}$ of $G$:
\[
1\to \mu_2\to \wt{\bQ_p^{\times}}\to \bQ_p^{\times}\to 1,
\]
and we consider the surjective group homomorphism
\begin{align*}
	\wt{\bQ_p^{\times}}\twoheadrightarrow \bQ_p^{\times}&\twoheadrightarrow \Hom^{\operatorname{cts}}(\bQ_p^{\times},\mu_2)\\
	z&\mapsto \chi_z=(\mu_{-1}^{v(z)}\omega^{v(z)}\mu_{\omega(z)})^{\frac{p-1}{2}},
\end{align*}
where $v\colon \bQ_p^{\times}\to \bZ$ is the $p$-adic valuation. We write $\chi_{\tilde{z}}=\chi_z$ for $\tilde{z}\in \wt{\bQ_p^{\times}}$ with image $z\in \bQ_p^{\times}$. The definition of these characters comes from the following lemma, which is quickly verified using the explicit cocycle (\ref{cocycle}), see also \cite[Lemma 4.4]{witthaus_meta}.

\begin{lem}\label{Ztilde_action}
	The conjugation action $\tilde{z}(-)(\tilde{z})^{-1}$ of an element $\tilde{z}\in \wt{Z}\cong \wt{\bQ_p^{\times}}$ on $\wt{G}$ is given by the $\wt{G}\supset \mu_2$-valued homomorphism 
	$\chi_{\tilde{z}}\circ {\det}$.
	
	In particular, for $\pi\in \Mod^{\sm}_{\wt{G},\iota}(k)$ (resp.\ $\Mod^{\sm}_{\wt{B},\iota}(k)$) and $\tilde{z}\in \wt{\bQ_p^{\times}}\cong \wt{Z}$, the action map $\tilde{z}\colon \pi \xrightarrow{\cong} \pi\otimes \chi_{\tilde{z}}\circ {\det}$ is $\wt{G}$-equivariant (resp.\ $\wt{B}$-equivariant) .
\end{lem}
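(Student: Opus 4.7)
The plan is to reduce everything to a cocycle identity for $\sigma$ and then verify it by a direct, if bookkeeping-heavy, computation with the Hilbert symbol.

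First, I would unravel the multiplication in $\wt{G}=G\times \mu_2$. Writing $\tilde{z}=(z,\zeta)$ for $z\in Z$ and $(g,\epsilon)\in \wt{G}$, the cocycle formula (\ref{cocycle}) gives $(z,\zeta)^{-1}=(z^{-1},\zeta^{-1}\sigma(z,z^{-1})^{-1})$, and multiplying out
\[
(z,\zeta)(g,\epsilon)(z,\zeta)^{-1} = \bigl(zgz^{-1},\,\epsilon\cdot \sigma(z,g)\sigma(zg,z^{-1})\sigma(z,z^{-1})^{-1}\bigr).
\]
Since $z$ is central in $G$, the first coordinate is just $g$, and the $\mu_2$-coordinate is independent of $\zeta$. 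So proving the first assertion comes down to the identity
\[
\sigma(z,g)\sigma(zg,z^{-1})\sigma(z,z^{-1})^{-1}=\chi_z(\det g)
\]
in $\mu_2$, for every $z\in \bQ_p^{\times}$ and every $g\in G$.

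Next I would verify this identity. Using the definition of $\mathfrak{c}$, one has $\mathfrak{c}(z)=z$ and $\mathfrak{c}(zg)=z\,\mathfrak{c}(g)$ for any diagonal $z$, so each of the three occurrences of $\sigma$ expands to a Hilbert symbol whose entries are monomials in $z$, $\mathfrak{c}(g)$, and $\det g$, with exponents coming from $v$. Using bilinearity of the Hilbert symbol modulo squares together with $(a,b)=\omega\bigl((-1)^{v(a)v(b)}b^{v(a)}/a^{v(b)}\bigr)^{(p-1)/2}$, all quantities of the form $\mathfrak{c}(g)$ cancel out, and what remains depends only on $v(z)$, $\omega(z)$, $v(\det g)$, $\omega(\det g)$. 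A short calculation, done in the two cases $c\neq 0$ and $c=0$ for $g=\begin{mat}a&b\\c&d\end{mat}$ (to identify $\mathfrak{c}(g)$), shows this collapses to $\bigl(\mu_{-1}^{v(z)}\omega^{v(z)}\mu_{\omega(z)}\bigr)^{(p-1)/2}(\det g)=\chi_z(\det g)$. The main obstacle is precisely to carry out this bookkeeping without sign errors; the simplification $\chi_z^2=\mathbf{1}$ means many apparent contributions disappear because they lie in the group of squares.

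For the ``in particular'' clause, let $\pi\in \Mod^{\sm}_{\wt{G},\iota}(k)$ and $\tilde{z}\in \wt{Z}$. For any $g\in \wt{G}$ and $v\in \pi$, the associativity of the $\wt{G}$-action gives
\[
\tilde{z}\cdot(g\cdot v)=(\tilde{z}g\tilde{z}^{-1})\cdot(\tilde{z}\cdot v).
\]
By the first part, $\tilde{z}g\tilde{z}^{-1}=(\chi_{\tilde{z}}\circ\det)(g)\cdot g$ as elements of $\wt{G}$, where $(\chi_{\tilde{z}}\circ\det)(g)\in \mu_2\subset Z(\wt{G})$. Since $\pi$ is genuine, this central $\mu_2$-factor acts on $v$ by the scalar $\iota\bigl((\chi_{\tilde{z}}\circ\det)(g)\bigr)$, which is precisely the scalar by which $g$ acts on $\tilde{z}v$ in the twist $\pi\otimes(\chi_{\tilde{z}}\circ\det)$. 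Hence $\tilde{z}\colon \pi\to \pi\otimes \chi_{\tilde{z}}\circ\det$ is $\wt{G}$-equivariant. The $\wt{B}$-case is identical since the formula $\tilde{z}g\tilde{z}^{-1}=(\chi_{\tilde{z}}\circ\det)(g)\cdot g$ is a statement at the level of $\wt{G}$ and applies a fortiori to $g\in \wt{B}$.
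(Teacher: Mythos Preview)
Your proposal is correct and follows precisely the approach the paper indicates: the paper's own proof consists solely of the remark that the statement ``is quickly verified using the explicit cocycle'' together with a citation, and your reduction to the identity $\sigma(z,g)\sigma(zg,z^{-1})\sigma(z,z^{-1})^{-1}=\chi_z(\det g)$ followed by a Hilbert-symbol computation is exactly that verification spelled out. One minor remark: once you have observed that all occurrences of $\mathfrak{c}(g)$ cancel (the product simplifies to the single Hilbert symbol $(z,\det g)$), the case split on $c\ne 0$ versus $c=0$ becomes unnecessary.
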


Hence, if we let $\wt{\bQ_p^{\times}}$ act on $\Mod^{\sm}_{\wt{B},\iota}(k)$ via twisting by the characters $\chi_{\tilde{z}}\circ \det $, then we have the canonical functor
\begin{equation}\label{can_functor_B}
	\Mod^{\sm}_{\wt{B},\iota}(k)\to \left(\Mod^{\sm}_{\wt{B},\iota}(k)\right)^{\wt{\bQ_p^{\times}}}, \hspace{0.1cm} \pi\mapsto (\pi,\pi|_{\wt{\bQ_p^{\times}}}),
\end{equation}
the image of which lies in the full subcategory of $\wt{\bQ_p^{\times}}$-equivariant objects whose underlying $\wt{\bQ_p^{\times}}$-action is smooth and genuine. This motivates the following definition:

\begin{definition}\label{def_metaplectic_category}
	\begin{enumerate}
		\item[{\rm (i)}] A \textit{metaplectic $(\varphi,\Gamma)$-module over $k((X))$} is a $\wt{\bQ_p^{\times}}$-equivariant $(\varphi,\Gamma)$-module $(D,\theta)$ over $k((X))$ such that $\theta\colon \wt{\bQ_p^{\times}}\to \Aut_{k((X))}(D)$ is smooth and genuine. We denote the resulting category by $\mphigamma$.
		\item[{\rm (ii)}] The \textit{category of metaplectic Galois representations of $\GQp$ on $k$-vector spaces} is the category $\wt{\Mod}^{\fin}_{G_{\bQ_p},\iota}(k)$ equivalent to $\mphigamma$ under (\ref{transfer}).
	\end{enumerate}
\end{definition}

As explained in Remark \ref{remark_twist_Br}, the functor $\mathbf{D}\colon \Mod^{\sm}_{\wt{B},\iota}(k)\xrightarrow{(-)|_P} \Mod^{\sm}_P(k)\to \Ind(\phigamma)$ is compatible with twists and thus induces a functor on $\wt{\bQ_p^{\times}}$-equivariant objects. Precomposing with (\ref{can_functor_B}) gives rise to the horizontal arrows in the left square of the diagram
\begin{equation}\label{big_diag}
	\xymatrix{
		\Mod^{\sm}_{\wt{B},\iota}(k) \ar[r]^{\wt{\mathbf{D}}} \ar@/^1.5pc/[rr]^{\mathbf{D}}& \Ind(\phigamma)^{\wt{\bQ_p^{\times}}} \ar[r]^{\mathrm{forget}} & \Ind(\phigamma)\\
		\Mod^{\sm}_{\wt{B},\iota}(k)_{Z(\wt{G})-\mathrm{fin}} \ar@{^{(}->}[u]\ar[r]^{\wt{\mathbf{D}}} & \Ind(\mphigamma)\ar@{^{(}->}[u] \ar[r]^{\mathrm{forget}} & \Ind(\phigamma) \ar@{=}[u]\\
		\Mod^{\fadm}_{\wt{G},\iota}(k) \ar[u]^{(-)|_{\wt{B}}} \ar[r]^{\wt{\mathbf{D}}}& \mphigamma \ar@{^{(}->}[u]\ar[r]^{\mathrm{forget}}  & \phigamma, \ar@{^{(}->}[u]
	}
\end{equation}
where for the middle horizontal functor we implicitly made use of Lemma \ref{cofinal_Z_stable}.

\begin{definition}
	The \textit{metaplectic Montr\'{e}al functor} is the functor
	\[
	\wt{\mathbf{D}}\colon \Mod^{\fadm}_{\wt{G},\iota}(k) \to \mphigamma
	\]
	appearing at the bottom of diagram (\ref{big_diag}). We further define
	\[
	\wt{\mathbf{V}}=\mathscr{V}\circ \wt{\mathbf{D}}\colon \Mod^{\fadm}_{\wt{G},\iota}(k)\to \wt{\Mod}^{\fin}_{G_{\bQ_p},\iota}(k)
	\]
	to be the composition of $\wt{\mathbf{D}}$ with Fontaine's equivalence, and we also refer to it as the metaplectic Montr\'{e}al functor.
\end{definition}

\begin{remark}\label{Dtilde_twist}
	The functor $\wt{\mathbf{D}}\colon \Mod^{\sm}_{\wt{B},\iota}(k)\to \Ind(\phigamma)^{\wt{\bQ_p^{\times}}}$ is compatible with twists: if $\wt{\mathbf{D}}(\pi)=(\mathbf{D}(\pi),\theta)$ and $\chi\colon \bQ_p^{\times}\to k^{\times}$ is a smooth character, then 
	\[
	\wt{\mathbf{D}}(\pi\otimes \chi\circ \det)=(\mathbf{D}(\pi)\otimes \chi, \theta\otimes \chi^2),
	\]
	where $(\theta\otimes \chi^2)_{\tilde{z}}=\chi(z^2)\theta_{\tilde{z}}$ for $\tilde{z}\in \wt{\bQ_p^{\times}}$ with image $z\in \bQ_p^{\times}$.
\end{remark}

Since the character $\chi_{\tilde{z}}$ is trivial for $\tilde{z}\in \wt{S}=S\times \mu_2$, Frobenius reciprocity for the inclusion $S\subset \wt{S}\subset \wt{\bQ_p^{\times}}$ as presented in Section \ref{section_Frobenius} (restricted to smooth genuine objects) may be interpreted as an adjoint pair
\begin{equation}\label{frob_rec}
	\Ind^{\wt{\bQ_p^{\times}}}_{\wt{S}}(-\boxtimes \iota)\colon \Mod^{\sm}_S(\phigamma) \rightleftarrows \mphigamma \noloc (-)|_S,
\end{equation}
where the source category consists of group homomorphisms $S\to \Aut_{\phigamma}(D)$, for $D\in \phigamma$, such that the underlying $k((X))$-linear representation of $S$ on $D$ is smooth. A corresponding version exists on the level of ind-categories, cf.\ Remark \ref{remark_action_ind}.\\

Recall from Lemma \ref{PS} (i) that $\wt{B}_S=B_S\times \mu_2$ as groups, so the functor $-\boxtimes \iota$ gives an equivalence of categories $\Mod^{\sm}_{B_S}(k)\simeq \Mod^{\sm}_{\wt{B}_S,\iota}(k)$.

\begin{prop}\label{D_comp_Ind}
	Let $\pi\in \Mod^{\sm}_B(k)$ with central character $\chi\colon \bQ_p^{\times}\to k^{\times}$. Then there is a canonical isomorphism
	\[
	\wt{\mathbf{D}}(\Ind^{\wt{B}}_{\wt{B}_S}(\pi|_{B_S}\boxtimes \iota))\cong \Ind^{\wt{\bQ_p^{\times}}}_{\wt{S}}(\chi|_S\boxtimes \mathbf{D}(\pi)\boxtimes \iota)
	\]
	in the category $\Ind(\mphigamma)$, where $\chi|_S\boxtimes \mathbf{D}(\pi)\in \Mod^{\sm}_S(\phigamma)$ is the action of $S$ on $\mathbf{D}(\pi)$ via $\chi|_S$.
\end{prop}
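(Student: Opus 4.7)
The plan is to construct the isomorphism via Frobenius reciprocity (\ref{frob_rec}) and to verify bijectivity after applying the forgetful functor to $\Ind(\phigamma)$. Write $\tilde{\pi}:=\Ind^{\wt{B}}_{\wt{B}_S}(\pi|_{B_S}\boxtimes\iota)$. The key structural observation is that $\wt{B}=\wt{B}_S\cdot\wt{Z}$ with $\wt{B}_S\cap\wt{Z}=\wt{S}$, identifying the finite coset space $\wt{B}_S\backslash\wt{B}\cong \wt{S}\backslash\wt{\bQ_p^{\times}}$; moreover $\wt{B}_S$ is normal in $\wt{B}$, being the preimage of the kernel of $\det\bmod S$. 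Fix representatives $\{\tilde{z}_i\}_{i\in I}$ for both quotients simultaneously.

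First I would exhibit the canonical $\wt{B}_S$-equivariant embedding $\pi|_{B_S}\boxtimes\iota\hookrightarrow\tilde{\pi}|_{\wt{B}_S}$ given by the unit of the adjunction: since $\wt{B}_S\subset\wt{B}$ is open of finite index, $\Ind^{\wt{B}}_{\wt{B}_S}$ coincides with compact induction and is in particular left adjoint to restriction. Restricting to $P\subset B_S$ and applying $\mathbf{D}$ gives a morphism $\mathbf{D}(\pi)\to\wt{\mathbf{D}}(\tilde{\pi})$ in $\Ind(\phigamma)$. This morphism is $S$-equivariant when $S$ acts by $\chi|_S$ on both sides: on the left, $k$-linearity of $\mathbf{D}$ turns scalar multiplication by $\chi(s)$ on $\pi$ into scalar multiplication by $\chi(s)$ on $\mathbf{D}(\pi)$; on the right, $S\hookrightarrow \wt{S}=Z(\wt{G})$ acts through the central character $\chi|_S\boxtimes\iota$ of $\tilde{\pi}$, and since $\chi_s=\mathbf{1}$ for $s\in S$, the $\wt{\bQ_p^{\times}}$-equivariant structure on $\wt{\mathbf{D}}(\tilde{\pi})$ restricts to this plain scalar action. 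Identifying the source with $\chi|_S\boxtimes\mathbf{D}(\pi)$ in $\Mod^{\sm}_S(\Ind(\phigamma))$ and invoking the ind-version of (\ref{frob_rec}) (Remark \ref{remark_action_ind}), one obtains a canonical morphism
\[
\Phi\colon \Ind^{\wt{\bQ_p^{\times}}}_{\wt{S}}(\chi|_S\boxtimes\mathbf{D}(\pi)\boxtimes\iota)\to\wt{\mathbf{D}}(\tilde{\pi})
\]
in $\Ind(\mphigamma)$.

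To prove $\Phi$ is an isomorphism, I would forget the $\wt{\bQ_p^{\times}}$-equivariant structure and match both sides summand by summand. By formula (\ref{Ind|}), the source decomposes as $\bigoplus_{i}\mathbf{D}(\pi)\otimes\chi_{\tilde{z}_i}$. On the target, Mackey decomposition (using normality of $\wt{B}_S$) yields $\tilde{\pi}|_{\wt{B}_S}\cong\bigoplus_i(\pi|_{B_S}\boxtimes\iota)^{\tilde{z}_i}$; Lemma \ref{Ztilde_action} identifies the conjugated representation as a twist, giving $(\pi|_{B_S}\boxtimes\iota)^{\tilde{z}_i}|_P\cong \pi|_P\otimes\chi_{\tilde{z}_i}\circ\det$. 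Applying $\mathbf{D}$ and using its compatibility with character twists (Remark \ref{remark_twist_Br}~(ii)) yields $\wt{\mathbf{D}}(\tilde{\pi})\cong\bigoplus_i \mathbf{D}(\pi)\otimes\chi_{\tilde{z}_i}$. The final step is to check that under these two decompositions $\Phi$ restricts to the identity on each summand.

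The main obstacle is precisely this last summand-by-summand identification. It requires tracing the adjunction unit through the explicit description of the induction functor $\Ind^{\wt{\bQ_p^{\times}}}_{\wt{S}}$ from Section \ref{section_Frobenius} (in particular the formula $(\varphi.f)(\tilde{z})=\chi_{\tilde{z}}(p)\varphi(f(\tilde{z}))$) and cross-referencing it with the $P$-level Mackey description of $\tilde{\pi}$. Careful bookkeeping of the genuine factor $\iota$ is needed, as the inclusion $P\times 1\subset\wt{B}_S=B_S\times\mu_2$ cancels the $\mu_2$-contribution on the trivial coset but not on the cosets $\wt{B}_S\tilde{z}_i$; this contribution must be matched with the character $\chi_{\tilde{z}_i}$ arising from the twist compatibility of $\mathbf{D}$.
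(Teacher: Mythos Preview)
Your proposal is correct and follows essentially the same approach as the paper: construct the map via Frobenius reciprocity from the inclusion $\mathbf{D}(\pi)\hookrightarrow\mathbf{D}(\tilde{\pi})$ (arising from the trivial coset in the Mackey decomposition) and then verify bijectivity after forgetting to $\Ind(\phigamma)$ by comparing the decomposition (\ref{Ind|}) with the Mackey decomposition of $\tilde{\pi}|_P$. The summand-by-summand verification you flag as the main obstacle is less delicate than you suggest and the paper dispatches it in one line: by construction of the adjunction, $\Phi$ on the $\tilde{z}_i$-summand is the $\theta(\tilde{z}_i)$-translate of the trivial-coset inclusion, and under the Mackey decomposition $\theta(\tilde{z}_i)$ carries the summand at $1$ to the summand at $\tilde{z}_i$ as the identity on underlying $k((X))$-vector spaces (twisting by $\chi_{\tilde{z}_i}$ does not change those); the genuine factor $\iota$ requires no special bookkeeping since all objects and morphisms are genuine throughout.
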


\begin{proof}
	The inclusion $\wt{\bQ_p^{\times}}\cong \wt{Z}\subset \wt{B}$ induces a bijection $\wt{S}\setminus \wt{\bQ_p^{\times}}\cong \wt{B}_S\setminus \wt{B}$ and it follows from Lemma \ref{Ztilde_action} that
	\[
	\Ind^{\wt{B}}_{\wt{B}_S}(\pi|_{B_S}\boxtimes \iota)|_{B_S}\cong \bigoplus_{\tilde{z}\in \wt{S}\setminus \wt{\bQ_p^{\times}}} (\pi\otimes \chi_{\tilde{z}}\circ \det)|_{B_S}.
	\]
	Since the definition of $\mathbf{D}$ only depends on the restriction to $P$, we deduce that
	\begin{equation}\label{D(Ind)}
		\mathbf{D}(\Ind^{\wt{B}}_{\wt{B}_S}(\pi|_{B_S}\boxtimes \iota))\cong \bigoplus_{\tilde{z}\in \wt{S}\setminus \wt{\bQ_p^{\times}}} \mathbf{D}(\pi)\otimes \chi_{\tilde{z}}.
	\end{equation}
	This contains $\mathbf{D}(\pi)$ and, since $S$ acts on $\wt{\mathbf{D}}(\Ind^{\wt{B}}_{\wt{B}_S}(\pi|_{B_S}\boxtimes \iota))$ via the character $\chi|_S$, we may apply (the ind-version of) Frobenius reciprocity (\ref{frob_rec}) to obtain a non-trivial morphism
	\[
	\Ind^{\wt{\bQ_p^{\times}}}_{\wt{S}}(\chi|_S\boxtimes \mathbf{D}(\pi)\boxtimes \iota) \to \wt{\mathbf{D}}(\Ind^{\wt{B}}_{\wt{B}_S}(\pi|_{B_S}\boxtimes \iota))
	\]
	in the category $\Ind(\mphigamma)$. That this is an isomorphism can be checked in the category $\Ind(\phigamma)$, where it follows from (\ref{D(Ind)}) and (\ref{Ind|}).
\end{proof}

\subsection{From metaplectic Galois to metaplectic Borel}

Following Colmez's recipe in \cite[Chapter III]{colmez1}, we explain how to attach a smooth genuine representation of $\wt{B}$ to a metaplectic $(\varphi,\Gamma)$-module.\\

For a $(\varphi,\Gamma)$-module $D\in \phigamma$, we have the operator $\psi$ as well as the largest $(\psi,\Gamma)$-stable lattice $D^{\sharp}\subset D$ as introduced in Section \ref{section_psi-gamma}. The operator $\psi$ on $D^{\sharp}$ is in general not invertible, but this can be overcome by passing to the limit
\[
\psi^{-\infty}(D):=\varprojlim_{\psi} D^{\sharp}=\{(v_n)_{n\geq 0} : \psi(v_{n+1})=v_n, \forall n\geq 0\}.
\]

\begin{remark}
	The canonical map
	\[
	\psi^{-\infty}(D)\xrightarrow{\cong} (\varprojlim_{\psi} D)_{b},
	\]
	where the right-hand side denotes bounded $\psi$-compatible sequences, is bijective. Indeed, $D^{\sharp}$ is finitely generated over $k\llbracket X\rrbracket$, hence bounded, i.e.\ the map is well-defined. Conversely, if $(v_n)_{n\geq 0}$ is a bounded $\psi$-compatible sequence, then there exists some $d\geq 0$ such that $v_n\in X^{-p^d}D^{\sharp}$. Since the inclusion $D^{\sharp}\subset X^{-p^d}D^{\sharp}$ has finite dimensional cokernel, the defining property of $D^{\sharp}$ allows us to choose $N\geq 0$ with $\psi^{N}(X^{-p^d}D^{\sharp})\subset D^{\sharp}$. In particular, $v_n=\psi^{N}(v_{n+N})\in D^{\sharp}$ for all $n\geq 0$.
\end{remark}

\begin{prop}[{\cite[Proposition III.1.1]{colmez1}}]\label{psi_P_action} The inverse limit $\psi^{-\infty}(D)=\varprojlim_{\psi} D^{\sharp}$ is a $k\llbracket X\rrbracket$-module via $f(X)\cdot (v_n)_n=(\varphi^n(f(X))v_n)_n$ and the following identities uniquely determine an action of the mirabolic subgroup $P$ on it: For $v=(v_n)_{n\geq 0}\in \psi^{-\infty}(D)$,
	\begin{align*}
		&\begin{mat} p^m & 0\\0 & 1\end{mat}v=(v_{n+m})_{n\geq 0} \text{ for all $m\in \bZ$,}\\
		&\begin{mat}
			\gamma & 0\\0 & 1
		\end{mat}v=(\gamma(v_n))_{n\geq 0} \text{ for all $\gamma\in \bZ_p^{\times}=\Gamma$,}\\
		&\begin{mat}
			1 & b\\0 & 1
		\end{mat}v=(X+1)^{-b}v=((X^{p^n}+1)^{-b}v_n)_{n\geq 0} \text{ for all $b\in \bZ_p$,}
	\end{align*}
	where we use the convention that $v_n=\psi^{-n}(v_0)$ if $n<0$.
\end{prop}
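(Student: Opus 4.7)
My plan is to verify well-definedness of the formulas on generators and then check the defining relations of $P$. For the underlying $k\llbracket X\rrbracket$-module structure, the sequence $(\varphi^n(f(X))v_n)_n$ lies in $D^\sharp$ componentwise because $D^\sharp$ is a $k\llbracket X\rrbracket$-submodule, and it is $\psi$-compatible by the identity $\psi(\varphi(g)w) = g\psi(w)$ recalled in Section \ref{section_psi-gamma}. The unipotent formula $(X+1)^{-b}\cdot v$ is then a special case with $f(X) = (1+X)^{-b}$, which makes sense as a $1$-unit in $k\llbracket X\rrbracket$ for $b \in \bZ_p$ via continuity of the binomial series.

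For the $\Gamma$-action $(\gamma(v_n))_n$, one uses that $\gamma$ preserves $D^\sharp$ (Remark \ref{sharp_functorial}) and commutes with $\psi$, so both $\psi$-compatibility and containment in $D^\sharp$ are automatic. For the diagonal shift by $p^m$ with $m \geq 0$, well-definedness is immediate; for $m < 0$, the convention $v_n := \psi^{|n|}(v_0)$ extends the sequence backward while remaining in $D^\sharp$ (as $\psi$ preserves $D^\sharp$), and the required $\psi$-relations on the extended range hold by construction.

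It remains to verify the defining relations of $P$. Commutativity of the diagonal torus and of the unipotent subgroup are immediate from the formulas. The essential conjugation relations $\mathrm{diag}(a) \cdot u_b = u_{ab} \cdot \mathrm{diag}(a)$ reduce to two calculations: for $a = p$, the characteristic-$p$ miracle $(X^{p^n}+1)^p = X^{p^{n+1}}+1$ yields the identity $(X^{p^n}+1)^{-pb} v_{n+1} = (X^{p^{n+1}}+1)^{-b} v_{n+1}$ term by term; for $a = \gamma \in \bZ_p^{\times}$, the semilinear identity $\gamma((1+X)^{-b}) = (1+X)^{-\gamma b}$, together with the semilinearity of $\gamma$ on $D$, gives the desired equality. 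These generators exhaust $P$ (once the unipotent action is extended to $b \in \bQ_p$ by conjugation with $p^m$, which is forced by the relations just verified), so uniqueness of the $P$-action is automatic. The main subtlety is purely bookkeeping---tracking the $\varphi^n$-twist in the $k\llbracket X\rrbracket$-action and the backward-extension convention when $m < 0$---rather than anything conceptual; the characteristic-$p$ identity $(Y+1)^p = Y^p+1$ is the key arithmetic miracle making the compatibility work.
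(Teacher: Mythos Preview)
The paper does not supply its own proof of this proposition; it is quoted verbatim from Colmez \cite[Proposition III.1.1]{colmez1} and used as a black box. Your direct verification---checking that each formula lands in $D^\sharp$, preserves $\psi$-compatibility, and that the generators satisfy the relations of $P$---is correct and is essentially how one would reconstruct Colmez's argument. The only point worth tightening is to state explicitly which presentation of $P$ you are using (e.g.\ $P$ as the semidirect product of $\bQ_p$ by $\bQ_p^{\times}$, with the latter generated by $p^{\bZ}$ and $\bZ_p^{\times}$), so that the reader sees that the relations you verify really exhaust a presentation; as written this is implicit in your final sentence about extending the unipotent action to $b\in\bQ_p$ by conjugation.
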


\begin{remark}
	Our definition of the action of $P$ on $\psi^{-\infty}(D)$ differs from the one of Colmez by conjugation by the element $\begin{mat}
		-1 & 0\\0 & 1
	\end{mat}$.
\end{remark}

\begin{lem}\label{psi_sharp}
	The assignment $D\mapsto \psi^{-\infty}(D)$ defines an exact functor from $\phigamma$ to the category of $k$-linear representations of $P$, which is compatible with twists: If $\chi\colon \bQ_p^{\times}\to k^{\times}$ is a smooth character, then the map
	\begin{align*}
		\varprojlim_{\psi} (D\otimes \chi)^{\sharp} &\cong (\varprojlim_{\psi} D^{\sharp}) \otimes \chi \circ {\det}\\
		(v_n)_{n\geq 0} & \mapsto (\chi(p)^{-n}v_n)_{n\geq 0}
	\end{align*}
	is a $P$-equivariant isomorphism. 
\end{lem}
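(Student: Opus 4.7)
The lemma decomposes into three claims: the functoriality of $\psi^{-\infty}$ as a $P$-representation-valued assignment, exactness, and the explicit twist-compatibility isomorphism.

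For functoriality, a morphism $f\colon D_1\to D_2$ in $\phigamma$ is by definition $(\varphi,\Gamma)$-equivariant and $k((X))$-linear, and Remark \ref{sharp_functorial} forces $f$ to commute with $\psi$ and to restrict to a map $D_1^{\sharp}\to D_2^{\sharp}$. Passing to $\varprojlim_{\psi}$ yields a $k\llbracket X\rrbracket$-linear map $\psi^{-\infty}(D_1)\to \psi^{-\infty}(D_2)$, and $P$-equivariance is immediate from the formulas in Proposition \ref{psi_P_action} since each generator's action is built from $\varphi$, $\gamma$, multiplication by elements of $k\llbracket X\rrbracket$, and index-shifting -- all of which commute with $f$.

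For exactness, left-exactness is automatic since $\varprojlim$ is left-exact. For right-exactness I would use the bounded-sequence description recalled in the remark preceding Proposition \ref{psi_P_action}. Given a short exact sequence $0\to D_1\to D_2\to D_3\to 0$ in $\phigamma$ and a bounded $\psi$-compatible sequence $(w_n)$ in $D_3$, I first build a (possibly unbounded) $\psi$-compatible lift $(v_n)$ in $D_2$ by induction: given $v_0,\ldots,v_n$ with $\psi(v_{i+1})=v_i$, pick any lift $\tilde{v}_{n+1}\in D_2$ of $w_{n+1}$, note that $v_n-\psi(\tilde{v}_{n+1})\in D_1$, and correct $\tilde{v}_{n+1}$ by a $\psi$-preimage in $D_1$ -- which exists because $\psi\colon D_1\to D_1$ is surjective (as $\psi\circ \varphi=\operatorname{id}$). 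The main obstacle is forcing the resulting lift to be bounded, i.e.\ to land inside $D_2^{\sharp}$. For this I would apply the maximality in Definition-Lemma to the image of $D_2^{\sharp}$ in $D_3$, which is a $\psi$-stable lattice on which $\psi$ is surjective, hence a sublattice $N\subseteq D_3^{\sharp}$ with $XD_3^{\sharp}\subseteq N$; combined with the Mittag-Leffler property of the $\psi$-system on $D_1^{\sharp}$ (a consequence of the surjectivity of $\psi$ on $D_1^{\sharp}$), a standard $\lim^1$-type argument produces a bounded lift inside $D_2^{\sharp}$.

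For twist-compatibility, the crucial observation is that $\varphi_{D\otimes \chi}=\chi(p)\varphi_D$ forces $\psi_{D\otimes \chi}=\chi(p)^{-1}\psi_D$, since $\psi$ is determined by the identity $\psi\circ \varphi=\operatorname{id}$ together with the $k\llbracket X\rrbracket$-module structure. As $\psi_{D\otimes \chi}$ is thus a nonzero $k$-scalar multiple of $\psi_D$, the two operators share the same stable lattices on which they are surjective, whence $(D\otimes \chi)^{\sharp}=D^{\sharp}$ as $k\llbracket X\rrbracket$-submodules of $D$. A direct check then confirms that $(v_n)\mapsto (\chi(p)^{-n}v_n)$ is a $k\llbracket X\rrbracket$-linear bijection sending $\psi_{D\otimes \chi}$-compatible sequences to $\psi_D$-compatible ones. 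Finally, $P$-equivariance against the twist by $\chi\circ \det$ reduces to verifying the claim on the three types of generators of $P$: in each case the $\chi(\det g)$-factor predicted by the twist matches precisely the scalar introduced by the bijection, namely $\chi(p)^m$ for $\begin{mat}p^m & 0\\0 & 1\end{mat}$, $\chi(\gamma)$ for $\begin{mat}\gamma & 0\\0 & 1\end{mat}$, and trivially $1$ for $\begin{mat}1 & b\\0 & 1\end{mat}$ (since its determinant is $1$). The only non-routine step is the boundedness argument in exactness; the rest is straightforward bookkeeping with the definitions.
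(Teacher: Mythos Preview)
Your treatment of functoriality and twist-compatibility is essentially identical to the paper's: both invoke Remark~\ref{sharp_functorial} for functoriality of $D\mapsto D^{\sharp}$, and both observe that $\psi_{D\otimes\chi}=\chi(p)^{-1}\psi_D$ (whence $(D\otimes\chi)^{\sharp}=D^{\sharp}$) and then check the generators of $P$ directly.

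The difference is in exactness. The paper does not argue: it simply cites \cite[Theorem III.3.5]{colmez1}. You instead sketch a direct proof via Mittag--Leffler. Your outline is in the right spirit---the image of $D_2^{\sharp}$ in $D_3$ is indeed a $\psi$-stable lattice with surjective $\psi$, so the Definition-Lemma gives $XD_3^{\sharp}\subseteq N\subseteq D_3^{\sharp}$, and surjectivity of $\psi$ on $D_1^{\sharp}$ does give Mittag--Leffler for the kernel system. However, the step you label ``standard $\lim^1$-type argument'' hides real work: the inverse system you need Mittag--Leffler for is $(D_1\cap D_2^{\sharp},\psi)$, not $(D_1^{\sharp},\psi)$, and one must first check that $D_1\cap D_2^{\sharp}$ is commensurable with $D_1^{\sharp}$ (it is, again by the characterization of $(-)^{\sharp}$) and that $\psi$ remains surjective on it, or else pass to a cofinal subsystem. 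This is exactly the content of Colmez's argument in \cite[III.3]{colmez1}, so your sketch is not wrong, but it is not self-contained either---you are re-deriving the cited theorem rather than bypassing it. If you want a truly independent proof you should fill in that comparison of lattices explicitly; otherwise, citing Colmez as the paper does is cleaner.
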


\begin{proof}
	While functoriality follows from the assignment $D\mapsto D^{\sharp}$ being functorial, see Remark \ref{sharp_functorial}, exactness is the content of \cite[Theorem III.3.5]{colmez1}.
	
	Let now $\chi\colon \bQ_p^{\times}\to k^{\times}$ be a smooth character. It follows from the definition of $D\otimes \chi$,  Definition \ref{phi_gamma_twist}, that $(D\otimes \chi)^{\sharp}=D^{\sharp}\otimes \chi$ with $\psi_{D\otimes \chi}(v)=\chi(p)^{-1}\psi_D(v)$ for all $v\in D$. From this one verifies that the described map is a well-defined $P$-equivariant isomorphism.
\end{proof}

We endow $D^{\sharp}$ with the $X$-adic topology and give $\psi^{-\infty}(D)=\varprojlim_{\psi} D^{\sharp}$ the inverse limit topology. Here note that the transition maps are $k\llbracket X\rrbracket$-linear if the $n$-th component is viewed as an $k\llbracket X\rrbracket$-module via the $n$-th power $\varphi^n$  (this defines the same topology as the $X$-adic one). Hence, $\varprojlim_{\psi} D^{\sharp}$ is a profinite $k\llbracket X\rrbracket$-module. Continuity of the $\Gamma$-action on $D$ implies that the action map $P\times \psi^{-\infty}(D)\to \psi^{-\infty}(D)$ is continuous (for the product topology on the source), hence the contragredient action of $P$ on the Pontryagin dual $(\psi^{-\infty}(D))^{\vee}$ is smooth. Precomposing the functor $D\mapsto \psi^{-\infty}(D)$ of Lemma \ref{psi_sharp} with the dual $(-)'$ of $(\varphi,\Gamma)$-modules and postcomposing with the Pontryagin dual, we thus obtain the exact covariant functor
\begin{equation}\label{functor_Omega}
	\bOmega:=(\psi^{-\infty}\circ (-)')^{\vee}\colon \phigamma \to \Mod^{\sm}_P(k),
\end{equation}
whose notation we adopt from \cite{berger_onsome}. By Lemma \ref{Ztilde_action}, we have the semi-direct product decomposition
\begin{equation}\label{Btilde_semidirect}
	\wt{B}=\wt{Z}\rtimes P,
\end{equation}
where the conjugation action of $P$ on $\wt{Z}$ is given by the group homomorphism $P\to \Aut(\wt{Z})$, $g\mapsto [\tilde{z}\mapsto \chi_{\tilde{z}}(\det(g))]$. 

Let now $(D,\theta)\in \mphigamma$ be a metaplectic $(\varphi,\Gamma)$-module. Besides the smooth $k$-representation $\bOmega(D)$ of $P$, we then have, for $\tilde{z}\in \wt{\bQ_p^{\times}}\cong \wt{Z}$, the $P$-equivariant isomorphism $\bOmega(\theta_{\tilde{z}})\colon \bOmega(D)\xrightarrow{\cong} \bOmega(D\otimes \chi_{\tilde{z}})\cong \bOmega(D)\otimes \chi_{\tilde{z}}\circ \det$ (implicitly making use of the isomorphism in Lemma \ref{psi_sharp}), which assemble into a smooth and genuine action of $\wt{Z}$ on the underlying $k$-vector space of $\bOmega(D)$. The semi-direct product decomposition (\ref{Btilde_semidirect}) and the $P$-equivariance of the isomorphisms $\bOmega(\theta_{\tilde{z}})$ allows us to uniquely glue the $\wt{Z}$-action on $\bOmega(D)$ to a smooth and genuine $k$-representation $\wt{\bOmega}(D)$ of $\wt{B}$. This construction gives rise to an exact covariant functor 
\[
\wt{\bOmega}\colon \mphigamma \to \Mod^{\sm}_{\wt{B},\iota}
\]
making the diagram
\[
\xymatrix{
	\phigamma \ar[r]^{\bOmega} & \Mod^{\sm}_P(k)\\
	\mphigamma \ar[u]^{\mathrm{forget}} \ar[r]^{\wt{\bOmega}} & \Mod^{\sm}_{\wt{B},\iota}(k)\ar[u]_{(-)|_P}
}
\]
commute.

\begin{prop}\label{can_B_equiv_map}
	For $\pi \in \Mod^{\fadm}_{\wt{G},\iota}(k)$, there is a canonical $\wt{B}$-equivariant map
	\[
	\wt{\bOmega}(\wt{\mathbf{D}}(\pi))\to \pi.
	\]
	If $\wt{\mathbf{D}}(\pi)\neq 0$, then this map is non-zero.
\end{prop}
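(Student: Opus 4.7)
The plan is to construct the map in two stages: first as a $P$-equivariant map via the classical Colmez--Breuil recipe applied to $\pi|_P$, and then to upgrade it to $\wt{B}$-equivariance using the semi-direct decomposition $\wt{B}=\wt{Z}\rtimes P$ from Lemma \ref{Ztilde_action}. The non-vanishing will then follow by tracing a non-zero element through the construction.

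For the $P$-equivariant map, I fix a generating subspace $M\in \mathcal{G}(\pi)$. By Corollary \ref{cor_compact}, $\pi_M\in \mathcal{M}(\pi)$ and $\mathbf{D}(\pi)=(\pi_M^{\vee}[1/X])'$. Since $F$ acts invertibly on $\pi$, the operator $F^{\vee}$ is surjective on $\pi_M^{\vee}$, and the inclusion $\pi_M^{\vee}\hookrightarrow \pi_M^{\vee}[1/X]=\mathbf{D}(\pi)'$ intertwines $F^{\vee}$ with $\psi$ (cf.\ the argument appearing in the proof of Corollary \ref{Galois_calculate}). The maximality property of $(-)^{\sharp}$ then forces $\pi_M^{\vee}\subset (\mathbf{D}(\pi)')^{\sharp}$. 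Using $\pi=\pi_M[F^{-1}]$, which comes from the decomposition $\wt{G}=P\wt{K}\wt{Z}$, Pontryagin duality identifies $\pi^{\vee}\cong \varprojlim_{F^{\vee}}\pi_M^{\vee}$, and we obtain a map
\[
\pi^{\vee}\cong \varprojlim_{F^{\vee}}\pi_M^{\vee} \to \varprojlim_{\psi} (\mathbf{D}(\pi)')^{\sharp}=\psi^{-\infty}(\mathbf{D}(\pi)').
\]
Pontryagin dualizing produces the sought map $\bOmega(\mathbf{D}(\pi))\to \pi$, and $P$-equivariance can be verified against the explicit formulas of Proposition \ref{psi_P_action} by direct computation, essentially as in \cite[§2]{Breuil_phi}.

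For the extension to $\wt{B}$-equivariance, Lemma \ref{Ztilde_action} tells us that each $\tilde{z}\in \wt{Z}$ acts on $\pi$ as a $P$-equivariant morphism $\tilde{z}\colon \pi\to \pi\otimes \chi_{\tilde{z}}\circ \det$. Applying $\mathbf{D}$ (compatible with twists by Remark \ref{remark_twist_Br}) gives $\theta_{\tilde{z}}\colon \mathbf{D}(\pi)\to \mathbf{D}(\pi)\otimes \chi_{\tilde{z}}$, which recovers precisely the $\wt{\bQ_p^{\times}}$-equivariant structure on $\wt{\mathbf{D}}(\pi)$. Naturality of the first-stage construction in $\pi$, combined with the compatibility of $\bOmega$ with twists supplied by Lemma \ref{psi_sharp}, produces a commutative diagram expressing $\wt{Z}$-equivariance; together with the $P$-equivariance from the first stage and the decomposition $\wt{B}=\wt{Z}\rtimes P$, this yields $\wt{B}$-equivariance. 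For the non-vanishing assertion, if $\wt{\mathbf{D}}(\pi)=\mathbf{D}(\pi)\neq 0$, then $(\mathbf{D}(\pi)')^{\sharp}$ is a non-zero $\psi$-surjective lattice, so Mittag--Leffler forces $\psi^{-\infty}(\mathbf{D}(\pi)')\neq 0$; picking $M\in \mathcal{G}(\pi)$ with $\pi_M\neq 0$, the inclusion $\pi_M^{\vee}\hookrightarrow (\mathbf{D}(\pi)')^{\sharp}$ is non-trivial, so the Pontryagin-dualized map to $\pi$ is non-zero.

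The main obstacle I foresee lies in the first stage: carefully verifying the $F^{\vee}$--$\psi$ intertwining for the inclusion $\pi_M^{\vee}\hookrightarrow \mathbf{D}(\pi)'$, and subsequently matching the concrete $P$-action supplied by Proposition \ref{psi_P_action} with the native $P$-action on $\pi$ through the chain of duals, inverse limits, and Pontryagin dualities. The second stage is conceptually clean, reducing by naturality and the semi-direct product structure to the first stage; it is essentially a formal consequence of the twist-compatibility assembled in Remark \ref{remark_twist_Br} and Lemma \ref{psi_sharp}.
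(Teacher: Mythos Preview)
Your proposal is correct and follows essentially the same approach as the paper: construct the dual map $\pi^{\vee}\cong \varprojlim_{F^{\vee}}\pi_M^{\vee}\to \varprojlim_{\psi}(\mathbf{D}(\pi)')^{\sharp}$ using the $F^{\vee}$--$\psi$ compatibility (which the paper attributes to Remark~\ref{formula}), check $P$-equivariance against Proposition~\ref{psi_P_action}, and deduce $\wt{Z}$-equivariance from the construction. Your treatment of $\wt{B}$-equivariance is more explicit than the paper's terse ``follows from the construction'', and your non-vanishing argument via Mittag--Leffler is slightly more elaborate than needed (it suffices that the composite with the projection to level $0$ is already non-zero), but neither is a gap.
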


\begin{proof}
	We construct the Pontryagin dual of the desired map. Let $M\in \mathcal{G}(\pi)$, so that $\pi=\pi_M[F^{-1}]$ and $\mathbf{D}(\pi)'=\pi_M^{\vee}[1/X]$ by Corollary \ref{cor_compact}. The formula in Remark \ref{formula} implies that the canonical map $\pi^{\vee}\to \pi_M^{\vee}\to \pi_M^{\vee}[1/X]=\mathbf{D}(\pi)'$ is compatible with the map $F^{\vee}$ on the source and $\psi$ on the target. Since $F$ is injective on $\pi_M$, the dual is surjective, hence the image of the bounded $k\llbracket X\rrbracket$-module $\pi_M^{\vee}$ in $\pi_M^{\vee}[1/X]$ is contained in $(\pi_M^{\vee}[1/X])^{\sharp}$, so we may consider the map
	\[
	\pi^{\vee}\cong \varprojlim_{F^{\vee}} \pi^{\vee}\to \varprojlim_{\psi}(\pi_M^{\vee}[1/X])^{\sharp}=\psi^{-\infty}(\mathbf{D}(\pi)')=\wt{\bOmega}(\wt{\mathbf{D}}(\pi))^{\vee},
	\]
	which is non-trivial if $\mathbf{D}(\pi)'=\pi_M^{\vee}[1/X]\neq 0$. Here, the first isomorphism is given by $f\mapsto \left(f\circ \begin{mat}
		p^{-n} & 0\\0 & 1
	\end{mat}\right)_{n\geq 0}$, from which one quickly verifies the $P$-equivariance. The $\wt{Z}$-equivariance follows from the construction.
\end{proof}

We finish this section with a useful compatibility of the functor $\wt{\bOmega}$ with the induction functor (\ref{frob_rec}), which is a counterpart of Proposition \ref{D_comp_Ind}.

\begin{prop}\label{Psi_Ind}
	For $D\in \phigamma$ and $\chi\colon S\to k^{\times}$ a smooth character, there is a canonical $\wt{B}$-equivariant isomorphism
	\[
	\Ind^{\wt{B}}_{\wt{B}_S}(\chi\boxtimes \bOmega(D)\boxtimes \iota)\cong \wt{\bOmega}\left(\Ind^{\wt{\bQ_p^{\times}}}_{\wt{S}}(\chi \boxtimes D\boxtimes \iota)\right),
	\]
	where $\chi\boxtimes \bOmega(D)$ is viewed as a representation of $B_S$ via the identification $B_S\cong S\times P$ (with $S$ embedded diagonally) and $\chi\boxtimes D\in \Mod^{\sm}_S(\phigamma)$ is given by the action of $S$ on $D$ via $\chi$.
\end{prop}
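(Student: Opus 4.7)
The plan is to apply Frobenius reciprocity for the smooth induction $\Ind^{\wt{B}}_{\wt{B}_S}$ to reduce the task to constructing a $\wt{B}_S$-equivariant map
\[
\beta \colon \chi\boxtimes \bOmega(D)\boxtimes \iota \to \wt{\bOmega}\bigl(\Ind^{\wt{\bQ_p^{\times}}}_{\wt{S}}(\chi \boxtimes D\boxtimes \iota)\bigr)\big|_{\wt{B}_S}.
\]
Under the identification $\wt{B}_S = \wt{S}\times P$ (using $B_S\cong S\times P$ and $\wt{B}_S = B_S\times \mu_2$ from Proposition \ref{PS}(i)), together with the fact that $\wt{S}\subset Z(\wt{G})$ is central in $\wt{G}$, giving $\beta$ amounts to producing a single map which is simultaneously $P$-equivariant and $\wt{S}$-equivariant.

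To construct $\beta$, I would begin with the unit of the adjunction (\ref{frob_rec}), which is a canonical inclusion
\[
\chi\boxtimes D \boxtimes \iota \hookrightarrow \Ind^{\wt{\bQ_p^{\times}}}_{\wt{S}}(\chi\boxtimes D\boxtimes \iota)\big|_{\wt{S}}
\]
of $\wt{S}$-equivariant $(\varphi,\Gamma)$-modules, corresponding to the $\tilde{z}=1$ summand of the decomposition (\ref{Ind|}). Applying the functor $\bOmega$ to the underlying $(\varphi,\Gamma)$-module morphism yields a $P$-equivariant map $\bOmega(D) \to \bOmega\bigl(\Ind^{\wt{\bQ_p^{\times}}}_{\wt{S}}(\chi\boxtimes D\boxtimes \iota)|_{\text{forget}}\bigr)$. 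Now for each $\tilde{s}\in \wt{S}$ the character $\chi_{\tilde{s}}$ is trivial, so the action of $\tilde{s}$ on $\chi\boxtimes D\boxtimes \iota$ is simply the $k((X))$-linear scalar $(\chi\boxtimes \iota)(\tilde{s})$, and by $k$-linearity of $\bOmega$ the induced action on $\bOmega(D)$ is via the same scalar. Hence this map is $\wt{S}$-equivariant for the action $\chi\boxtimes \iota$ on the source, and combining with $P$-equivariance defines $\beta$. Applying Frobenius reciprocity gives the desired $\wt{B}$-equivariant morphism
\[
\alpha\colon \Ind^{\wt{B}}_{\wt{B}_S}(\chi\boxtimes \bOmega(D)\boxtimes \iota)\to \wt{\bOmega}\bigl(\Ind^{\wt{\bQ_p^{\times}}}_{\wt{S}}(\chi \boxtimes D\boxtimes \iota)\bigr).
\]

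To verify $\alpha$ is an isomorphism, I would restrict both sides to $P$ and compare. The coset bijection $\wt{B}_S\setminus \wt{B} \cong \wt{S}\setminus\wt{\bQ_p^{\times}}$ induced by $\wt{\bQ_p^{\times}}\cong \wt{Z}\hookrightarrow \wt{B}$, combined with the twisted conjugation action of Lemma \ref{Ztilde_action}, identifies the LHS with $\bigoplus_{\tilde{z}\in \wt{S}\setminus\wt{\bQ_p^{\times}}} (\bOmega(D)\otimes \chi_{\tilde{z}}\circ \det)|_P$. On the RHS, the commutativity $\wt{\bOmega}(-)|_P = \bOmega\circ \mathrm{forget}$ together with (\ref{Ind|}) and the twist-compatibility of Lemma \ref{psi_sharp} yields $\bigoplus_{\tilde{z}} \bOmega(D\otimes \chi_{\tilde{z}})\cong \bigoplus_{\tilde{z}} \bOmega(D)\otimes \chi_{\tilde{z}}\circ \det$. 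By construction, $\alpha|_P$ respects these decompositions and induces the canonical identification on each summand, so it is an isomorphism; faithfulness of $(-)|_P$ then concludes.

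The main obstacle will be the careful bookkeeping of the compatibility between the $\wt{Z}$-equivariant structure on $\wt{\bOmega}\bigl(\Ind^{\wt{\bQ_p^{\times}}}_{\wt{S}}(\chi\boxtimes D\boxtimes \iota)\bigr)$, which by definition comes from applying $\bOmega$ to the twist-isomorphisms $\theta_{\tilde{z}}$ of the metaplectic structure, and the $\wt{Z}$-action on the induction $\Ind^{\wt{B}}_{\wt{B}_S}(-)$ that encodes the conjugation twisting of Lemma \ref{Ztilde_action}. In particular, one must verify that the naturality square connecting the unit of (\ref{frob_rec}) with $\bOmega$ is compatible with the extension of the $P$-action to a $\wt{B}$-action via the semi-direct product decomposition (\ref{Btilde_semidirect}); this is ultimately a consequence of Lemma \ref{psi_sharp} applied to the maps $\theta_{\tilde{z}}$ for $\tilde{z}\in \wt{\bQ_p^{\times}}$.
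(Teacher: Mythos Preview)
Your proposal is correct and follows essentially the same route as the paper's proof: both construct the map by applying $\bOmega$ to the inclusion of $\chi\boxtimes D$ into the restriction of $\Ind^{\wt{\bQ_p^{\times}}}_{\wt{S}}(\chi\boxtimes D\boxtimes \iota)$ coming from (\ref{Ind|}), then invoke Frobenius reciprocity (using that $\wt{B}_S\subset \wt{B}$ has finite index, so $\Ind=\cInd$) to obtain the $\wt{B}$-equivariant morphism, and finally verify it is an isomorphism after restriction to $P$ via the parallel direct-sum decompositions. Your write-up is somewhat more explicit about the $\wt{S}$-equivariance and the bookkeeping with Lemma \ref{psi_sharp}, but there is no substantive difference in strategy.
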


\begin{proof}
	By equation (\ref{Ind|}) applied to $\wt{S}\subset \wt{\bQ_p^{\times}}$ (which is a central subgroup), we have the isomorphism
	\[
	\Ind^{\wt{\bQ_p^{\times}}}_{\wt{S}}(\chi \boxtimes D\boxtimes \iota)|_S\cong \bigoplus_{\wt{z}\in \wt{S}\setminus \wt{\bQ_p^{\times}}} \chi\boxtimes (D\otimes \chi_{\tilde{z}})
	\]
	in the category $\Mod^{\sm}_S(\phigamma)$. The right-hand side contains $\chi\boxtimes D$ as a subobject. Applying the functor $\bOmega$ and using Frobenius reciprocity ($S$ acts via the character $\chi$ on both sides), we obtain a $\wt{B}$-equivariant map
	\[
	\Ind^{\wt{B}}_{\wt{B}_S}(\chi\boxtimes \bOmega(D)\boxtimes \iota)\to \wt{\bOmega}\left(\Ind^{\wt{\bQ_p^{\times}}}_{\wt{S}}(\chi \boxtimes D\boxtimes \iota)\right),
	\]
	which is an isomorphism as can be seen after restricting to $P$, where both sides are identified with $\oplus_{\tilde{z}\in \wt{S}\setminus \wt{\bQ_p^{\times}}} \bOmega(D)\otimes \chi_{\tilde{z}}$, see also the proof of Proposition \ref{D_comp_Ind}.
\end{proof}

\subsection{Images of absolutely irreducible genuine representations} In this final section, we compute the images of the absolutely irreducible genuine objects under the metaplectic Montr\'{e}al functor $\wt{\mathbf{D}}\colon \Mod^{\fadm}_{\wt{G},\iota}(k)\to \mphigamma$, or equivalently under $\wt{\mathbf{V}}\colon \Mod^{\fadm}_{\wt{G},\iota}(k)\to \wt{\Mod}^{\fin}_{G_{\bQ_p},\iota}(k)$. The results in Section \ref{section_recollection} imply that an absolutely irreducible genuine $k$-representation of $\wt{G}$ is either a principal series or supersingular.

\subsubsection{Genuine principal series} For a pair of smooth characters $\chi_1,\chi_2\colon \bQ_p^{\times}\to k^{\times}$, define
\[
\tilde{\pi}(\chi_1,\chi_2)=\Ind^{\wt{G}}_{\wt{B}_S}(\chi_1\otimes \chi_2|_S \boxtimes \iota)=\Ind^{\wt{G}}_{\wt{B}}(\Ind^{\wt{T}}_{\wt{T}_S}(\chi_1\otimes \chi_2|_S\boxtimes \iota)),
\]
whose isomorphism class only depends on the restriction $(\chi_1|_S,\chi_2|_S)$, see Proposition \ref{PS}. Evaluation at the identity yields the short exact sequence
\begin{equation}\label{ses_PS}
	0\to \tilde{\pi}(\chi_1,\chi_2)^{\operatorname{bc}}\to \tilde{\pi}(\chi_1,\chi_2)\to \Ind^{\wt{T}}_{\wt{T}_S}(\chi_1\otimes \chi_2|_S\boxtimes \iota)\to 0
\end{equation}
of $\wt{B}$-representations, where the kernel is the subspace of functions whose support is contained in the big cell, i.e.\ in the preimage of $B\begin{mat}
	0 & 1\\1 & 0
\end{mat}B$ under the projection $\wt{G}\twoheadrightarrow G$. By the proposition just cited and \cite[Proposition 4.28]{witthaus_meta}, both cokernel and kernel are irreducible. Note that any splitting of the above sequence would yield a finite dimensional $\wt{G}$-stable subspace of the irreducible $\wt{G}$-representation $\tilde{\pi}(\chi_1,\chi_2)$, which is infinite dimensional. Hence, the sequence is non-split.\\

We define $\Ind^G_B(\chi_1\otimes \chi_2)=\pi(\chi_1,\chi_2)\supset \pi(\chi_1,\chi_2)^{\operatorname{bc}}$ similarly.

\begin{lem}\label{bc_induced}
	As $\wt{B}$-representations, $\tilde{\pi}(\chi_1,\chi_2)^{\operatorname{bc}}\cong \Ind^{\wt{B}}_{\wt{B}_S}(\pi(\chi_1,\chi_2)^{\operatorname{bc}}|_{B_S}\boxtimes \iota)$.
\end{lem}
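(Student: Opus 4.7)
The plan is to compare two short exact sequences of $\wt{B}$-representations with a common rightmost term and conclude via the five-lemma. The first sequence is (\ref{ses_PS}), which exhibits $\tilde{\pi}(\chi_1,\chi_2)^{\operatorname{bc}}$ as the kernel of the evaluation-at-identity surjection $\tilde{\pi}(\chi_1,\chi_2)|_{\wt{B}}\twoheadrightarrow \Ind^{\wt{T}}_{\wt{T}_S}(\chi_1\otimes\chi_2|_S\boxtimes\iota)$. The second is obtained from the non-metaplectic analog
\[
0\to \pi(\chi_1,\chi_2)^{\operatorname{bc}}\to \pi(\chi_1,\chi_2)\to \chi_1\otimes\chi_2\to 0
\]
of $B$-representations by restricting to $B_S$ and applying the functor $\Ind^{\wt{B}}_{\wt{B}_S}(-\boxtimes\iota)$; this functor is exact because the index $[\wt{B}:\wt{B}_S]=|\bQ_p^{\times}/S|=4$ is finite. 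Using transitivity of induction together with the identification $B_S/N=T_S$, the resulting rightmost term is canonically $\Ind^{\wt{T}}_{\wt{T}_S}(\chi_1\otimes\chi_2|_S\boxtimes\iota)$, matching the quotient of (\ref{ses_PS}).

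The core of the argument is to construct a canonical $\wt{B}$-equivariant isomorphism of the two middle terms
\[
\tilde{\pi}(\chi_1,\chi_2)|_{\wt{B}}\;\cong\;\Ind^{\wt{B}}_{\wt{B}_S}\bigl(\pi(\chi_1,\chi_2)|_{B_S}\boxtimes\iota\bigr)
\]
intertwining the two surjections onto the common quotient. The map I would use sends $\tilde{f}\in\tilde{\pi}(\chi_1,\chi_2)$ to the function $\wt{b}\mapsto f_{\wt{b}}$ on $\wt{B}$, where $f_{\wt{b}}(g):=\tilde{f}((g,1)\cdot \wt{b})$ and $(g,1)\in \wt{G}$ is the naive set-theoretic lift coming from $\wt{G}=G\times\mu_2$. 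The delicate point is to verify that $f_{\wt{b}}$ transforms correctly under the full Borel $B$ (not only under $B_S$): for $b\in B_S$ this is immediate from the canonical splitting $\wt{B}_S=B_S\times\mu_2$ of Proposition \ref{PS}(i), while for a representative $t$ of a non-trivial coset of $T/T_S$ one uses the explicit formula (\ref{cocycle}) for the $2$-cocycle $\sigma$ to show that the defect cocycle factor produced by $(tg,1)=(t,1)(g,1)(1,\sigma(t,g))$ can be absorbed by rewriting the product on the right, shifting the $\wt{b}$-argument accordingly. Bijectivity follows from the observation that both middle terms are parametrized by four smooth functions $N\to k$, indexed by cosets of $T_S$ in $T$ via the Bruhat factorization $BwB=B_S\cdot T_1\cdot wN$ with $T_1$ a chosen transversal, and the proposed map intertwines these parametrizations.

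The five-lemma applied to the resulting map of short exact sequences then produces the desired canonical isomorphism of kernels. The main obstacle I anticipate is precisely the cocycle cancellation needed for the $T$-equivariance of $f_{\wt{b}}$ in the construction of the middle-term map: this reduces, via multiplicativity of the Hilbert symbol and the explicit form of $\sigma$, to a finite identity indexed by representatives of $\bQ_p^{\times}/S$ that must be checked case by case; everything else in the argument is formal once this identity is in hand.
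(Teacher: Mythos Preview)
Your five-lemma strategy is sensible in outline, but the explicit map you write down for the middle terms does not work, and the error is precisely where you claim things are easy. You assert that for $b\in B_S$ the transformation law $f_{\wt{b}}(bg)=(\chi_1\otimes\chi_2)(b)f_{\wt{b}}(g)$ is ``immediate from the canonical splitting $\wt{B}_S=B_S\times\mu_2$''. It is not. From $(bg,1)=(1,\sigma(b,g)^{-1})(b,1)(g,1)$ one gets
\[
f_{\wt{b}}(bg)=\sigma(b,g)^{-1}\,(\chi_1\otimes\chi_2)(b)\,f_{\wt{b}}(g),
\]
and $\sigma(b,g)$ is \emph{not} trivial for general $b\in B_S$ and $g\in G$: take $b=\begin{mat}p&0\\0&1\end{mat}\in B_S$ and any $g$ with lower-left entry $p$, where the cocycle formula (\ref{cocycle}) gives $\sigma(b,g)=(p,p)=(-1)^{(p-1)/2}$. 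The splitting of $\wt{B}_S$ only tells you that $\sigma|_{B_S\times B_S}$ is trivial; it says nothing about $\sigma|_{B_S\times G}$. Since the defect depends on $g$, it cannot be absorbed into the $\wt{b}$-variable, so $f_{\wt{b}}$ does not lie in $\pi(\chi_1,\chi_2)$ and your map is not defined. Fixing this would require a $g$-dependent correction factor, which amounts to finding a function $\beta\colon G\to\mu_2$ with $\beta(bg)=\sigma(b,g)\beta(g)$ for all $b\in B$---i.e.\ a trivialization of the cocycle along left $B$-translates---and that is exactly the nontrivial metaplectic input you were hoping to avoid.

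The paper sidesteps this entirely by working directly with the big-cell pieces rather than the full principal series. Via $f\mapsto\bigl[x\mapsto f\bigl(\begin{mat}0&1\\1&x\end{mat}\bigr)\bigr]$ one identifies $\pi(\chi_1,\chi_2)^{\operatorname{bc}}\cong\mathscr{C}^{\infty}_c(\bQ_p,k)\otimes(\chi_2\otimes\chi_1)$ as $B$-representations, and analogously $\tilde{\pi}(\chi_1,\chi_2)^{\operatorname{bc}}\cong\mathscr{C}^{\infty}_c(\bQ_p,k)\otimes\Ind^{\wt{T}}_{\wt{T}_S}(\chi_2\otimes\chi_1|_S\boxtimes\iota)$ as $\wt{B}$-representations (cf.\ \cite[Lemma 4.23]{witthaus_meta}). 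The result then follows from the projection formula for $\Ind^{\wt{B}}_{\wt{B}_S}$, since $\mathscr{C}^{\infty}_c(\bQ_p,k)$ is already a $\wt{B}$-representation (factoring through $B$). This is both shorter and avoids any cocycle manipulation on the full group.
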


\begin{proof}
	The assignment $f\mapsto \left[x\mapsto f\left(\begin{mat}
		0 & 1\\1 & x
	\end{mat}\right)\right]$ defines a $B$-equivariant isomorphism from $\pi(\chi_1,\chi_2)^{\operatorname{bc}}$ onto $\mathscr{C}^{\infty}_c(\bQ_p,k)\otimes (\chi_2\otimes \chi_1)$ for an appropriate action of $B$ on the space $\mathscr{C}^{\infty}_c(\bQ_p,k)$ of locally constant compactly supported functions. Similarly, we may $\wt{B}$-equivariantly identify $\tilde{\pi}(\chi_1,\chi_2)^{\operatorname{bc}}\cong \mathscr{C}^{\infty}_c(\bQ_p,k)\otimes \Ind^{\wt{T}}_{\wt{T}_S}(\chi_2\otimes \chi_1|_S\boxtimes \iota)$, see \cite[Lemma 4.23]{witthaus_meta} for more details. The projection formula now gives the assertion
\end{proof}

\begin{prop}\label{image_ps} Let $\chi_1,\chi_2\colon \bQ_p^{\times}\to k^{\times}$ be smooth characters.
	\begin{enumerate}
		\item[{\rm (i)}] The object $\Ind^{\wt{\bQ_p^{\times}}}_{\wt{S}}((\chi_1\chi_2)|_S \boxtimes \mathscr{D}(\chi_2)\boxtimes \iota)$ is irreducible in $\mphigamma$, where $\mathscr{D}$ is Fontaine's equivalence \ref{fontaine_equiv}.
		\item[{\rm (ii)}] There is an isomorphism
		\[
		\wt{\mathbf{D}}(\tilde{\pi}(\chi_1,\chi_2)) \cong \Ind^{\wt{\bQ_p^{\times}}}_{\wt{S}}((\chi_1\chi_2)|_S \boxtimes \mathscr{D}(\chi_2)\boxtimes \iota)
		\]
		in the category $\mphigamma$.
		\item[{\rm (iii)}] The canonical $\wt{B}$-equivariant map $\wt{\bOmega}\wt{\mathbf{D}}(\tilde{\pi}(\chi_1,\chi_2))\to \tilde{\pi}(\chi_1,\chi_2)$ of Proposition \ref{can_B_equiv_map} has image $\tilde{\pi}(\chi_1,\chi_2)^{\operatorname{bc}}$ and induces a non-split short exact sequence
		\[
		0\to \Ind^{\wt{T}}_{\wt{T}_S}(\chi_2\omega \otimes (\chi_1 \omega^{-1})|_S\boxtimes \iota)\to \wt{\bOmega}\wt{\mathbf{D}}(\tilde{\pi}(\chi_1,\chi_2))\to \tilde{\pi}(\chi_1,\chi_2)^{\operatorname{bc}}\to 0.
		\]
	\end{enumerate}
\end{prop}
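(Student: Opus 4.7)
The plan is to reduce all three statements to the corresponding known facts for principal series of $\GL_2(\bQ_p)$ via Propositions \ref{D_comp_Ind} and \ref{Psi_Ind}, which compute $\wt{\mathbf{D}}$ and $\wt{\bOmega}$ on objects induced from $\wt{B}_S=B_S\times \mu_2$ to $\wt{B}$. For (ii), lift $\chi_2|_S$ to a smooth character $\chi_2\colon \bQ_p^{\times}\to k^{\times}$ and consider the smooth principal series $\pi(\chi_1,\chi_2)=\Ind^G_B(\chi_1\otimes \chi_2)$, which has central character $\chi_1\chi_2$ and satisfies $\mathbf{D}(\pi(\chi_1,\chi_2))\cong \mathscr{D}(\chi_2)$ by Colmez's calculation. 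Proposition \ref{D_comp_Ind} yields
\[
\wt{\mathbf{D}}\bigl(\Ind^{\wt{B}}_{\wt{B}_S}(\pi(\chi_1,\chi_2)|_{B_S}\boxtimes \iota)\bigr)\cong \Ind^{\wt{\bQ_p^{\times}}}_{\wt{S}}\bigl((\chi_1\chi_2)|_S\boxtimes \mathscr{D}(\chi_2)\boxtimes \iota\bigr).
\]
To identify the left-hand side with $\wt{\mathbf{D}}(\tilde{\pi}(\chi_1,\chi_2))$, apply the exact functor $\Ind^{\wt{B}}_{\wt{B}_S}(-\boxtimes \iota)$ to the restriction of the Bruhat sequence $0\to \pi(\chi_1,\chi_2)^{\operatorname{bc}}\to \pi(\chi_1,\chi_2)\to \chi_1\otimes\chi_2\to 0$ to $B_S$: by Lemma \ref{bc_induced} the induced subrepresentation is $\tilde{\pi}(\chi_1,\chi_2)^{\operatorname{bc}}$, and the induced quotient $\Ind^{\wt{B}}_{\wt{B}_S}((\chi_1\otimes\chi_2)|_{B_S}\boxtimes \iota)$ is finite-dimensional over $k$, hence killed by $\wt{\mathbf{D}}$ by Proposition \ref{exactness_properties}(iii). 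Left-exactness of $\wt{\mathbf{D}}$ then yields $\wt{\mathbf{D}}(\tilde{\pi}(\chi_1,\chi_2)^{\operatorname{bc}})\cong \wt{\mathbf{D}}(\Ind^{\wt{B}}_{\wt{B}_S}(\pi(\chi_1,\chi_2)|_{B_S}\boxtimes \iota))$. Applying the same reasoning to the sequence (\ref{ses_PS}) produces $\wt{\mathbf{D}}(\tilde{\pi}(\chi_1,\chi_2))\cong \wt{\mathbf{D}}(\tilde{\pi}(\chi_1,\chi_2)^{\operatorname{bc}})$, and chaining these identifications gives (ii).

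Statement (i) is then immediate: by (\ref{Ind|}) the underlying $\wt{S}$-equivariant $(\varphi,\Gamma)$-module of the induced object is $\bigoplus_{\tilde{z}\in\wt{S}\setminus \wt{\bQ_p^{\times}}} \mathscr{D}(\chi_2\chi_{\tilde{z}})$, and as $\tilde{z}$ varies, the characters $\chi_{\tilde{z}}$ exhaust the four elements of $\Hom^{\operatorname{cts}}(\bQ_p^{\times}/S,\mu_2)$, so the four one-dimensional summands are pairwise non-isomorphic. Any $(\varphi,\Gamma)$-submodule is therefore a direct sum of some of them, and $\wt{\bQ_p^{\times}}$-equivariance combined with transitivity of the action on the four summands forces it to be $0$ or the whole object. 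For (iii), combining (ii) with Proposition \ref{Psi_Ind} identifies
\[
\wt{\bOmega}\wt{\mathbf{D}}(\tilde{\pi}(\chi_1,\chi_2))\cong \Ind^{\wt{B}}_{\wt{B}_S}\bigl((\chi_1\chi_2)|_S\boxtimes \bOmega(\mathscr{D}(\chi_2))\boxtimes \iota\bigr).
\]
The $\GL_2(\bQ_p)$-analog of Proposition \ref{can_B_equiv_map}, classical in Colmez's framework, supplies a non-split $B$-equivariant short exact sequence
\[
0\to \chi_2\omega\otimes \chi_1\omega^{-1}\to \bOmega\mathbf{D}(\pi(\chi_1,\chi_2))\to \pi(\chi_1,\chi_2)^{\operatorname{bc}}\to 0.
\]
Restricting to $B_S$ (where $\omega|_S=1$) and applying the exact functor $\Ind^{\wt{B}}_{\wt{B}_S}(-\boxtimes \iota)$ produces a $\wt{B}$-equivariant short exact sequence whose middle term is $\wt{\bOmega}\wt{\mathbf{D}}(\tilde{\pi}(\chi_1,\chi_2))$, whose kernel is $\Ind^{\wt{T}}_{\wt{T}_S}(\chi_2\omega\otimes (\chi_1\omega^{-1})|_S\boxtimes \iota)$ (after inflation from $T_S$), and whose quotient is $\tilde{\pi}(\chi_1,\chi_2)^{\operatorname{bc}}$ via Lemma \ref{bc_induced}; non-splitness descends from the $\GL_2$ case since a splitting would restrict via (\ref{Ind|}) to a splitting of the trivial-twist summand, contradicting the $\GL_2$ fact.

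The main obstacle will be verifying that the surjection onto $\tilde{\pi}(\chi_1,\chi_2)^{\operatorname{bc}}$ produced above, post-composed with the inclusion $\tilde{\pi}(\chi_1,\chi_2)^{\operatorname{bc}}\hookrightarrow \tilde{\pi}(\chi_1,\chi_2)$, really agrees with the canonical $\wt{B}$-equivariant map of Proposition \ref{can_B_equiv_map}. Both maps arise by Pontryagin-dualizing the natural assignment $\pi^{\vee}\to \varprojlim_\psi (\pi_M^{\vee}[1/X])^{\sharp}$, and the identification requires checking that this construction commutes with $\Ind^{\wt{B}}_{\wt{B}_S}$ in the sense that the metaplectic canonical map recovers, on each $\wt{B}_S$-component appearing in the Mackey decomposition (\ref{Ind|}), the corresponding $\GL_2$ canonical map for an appropriate conjugate of $\pi(\chi_1,\chi_2)$.
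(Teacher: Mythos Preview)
Your proof is essentially correct and follows the same strategy as the paper: reduce to $\GL_2(\bQ_p)$ via Proposition \ref{D_comp_Ind}, Lemma \ref{bc_induced}, and Proposition \ref{Psi_Ind}, then induce Berger's sequence. Two remarks are in order.

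First, the ``main obstacle'' you flag is unnecessary. The paper does \emph{not} verify that the induced surjection equals the canonical map of Proposition \ref{can_B_equiv_map}. Instead it argues structurally: the canonical map is non-zero, its source has length two as a $\wt{B}$-representation (finite-dimensional irreducible sub, $\tilde{\pi}(\chi_1,\chi_2)^{\operatorname{bc}}$ as quotient), and its target sits in the non-split sequence (\ref{ses_PS}) with irreducible pieces. Comparing dimensions (finite vs.\ infinite) forces the image to be $\tilde{\pi}(\chi_1,\chi_2)^{\operatorname{bc}}$ and the kernel to be the finite-dimensional sub, so the canonical map automatically realises the induced sequence. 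This bypasses any compatibility check between the metaplectic and $\GL_2$ canonical maps.

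Second, your non-splitness argument (descent to the trivial-twist summand) is valid since Berger's sequence is non-split already as $P$-representations, but the paper's argument is different and slightly cleaner: a splitting would dually yield a non-zero map from a finite-dimensional space into $(\wt{\bOmega}\wt{\mathbf{D}}(\tilde{\pi}))^{\vee}$, which is $X$-torsionfree, a contradiction. Also, your parenthetical ``where $\omega|_S=1$'' is incorrect ($\omega$ has order $p-1$ on $\bZ_p^{\times}$, so $\omega|_S$ is generally non-trivial), but this does not affect the computation since you carry the $\omega$-factors through correctly anyway.
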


\begin{proof}
	By equation (\ref{Ind|}), we have
	\begin{equation}\label{res_to_S}
		\Ind^{\wt{\bQ_p^{\times}}}_{\wt{S}}((\chi_1\chi_2)|_S \boxtimes \mathscr{D}(\chi_2)\boxtimes \iota)|_{S}= \bigoplus_{\tilde{z}\in \wt{S}\setminus \wt{\bQ_p^{\times}}} (\chi_1\chi_2)|_S \boxtimes (\mathscr{D}(\chi_2)\otimes \chi_{\tilde{z}})
	\end{equation}
	in the category $\Mod^{\sm}_S(\phigamma)$.
	Since the summands are pairwise non-isomorphic, we deduce irreduciblity proving part (i).
	
	For (ii), note that the cokernel of (\ref{ses_PS}) is finite dimensional and therefore is $k[F]$-torsion. By Proposition \ref{exactness_properties} (ii), we thus have $\wt{\mathbf{D}}(\tilde{\pi}(\chi_1,\chi_2))=\wt{\mathbf{D}}(\tilde{\pi}(\chi_1,\chi_2)^{\operatorname{bc}})$. By the same reasoning, we have $\mathbf{D}(\pi(\chi_1,\chi_2)^{\operatorname{bc}})=\mathbf{D}(\pi(\chi_1,\chi_2))$, which is known to be equal to $\mathscr{D}(\chi_2)$, see \cite[Th\'{e}or\`{e}me 0.10 (iii)]{colmez2} (beware of Colmez's normalization). Part (ii) now follows by applying the functor $\wt{\mathbf{D}}$ to the isomorphism in Lemma \ref{bc_induced} and utilizing Proposition \ref{D_comp_Ind}.
	
	For (iii), note that an analogue for $\pi(\chi_1,\chi_2)$ was proved by Berger \cite[Proposition 1.2.4, Theoreme 2.1.1]{berger_french}, namely there is a non-split short exact sequence
	\[
	0\to \chi_2\omega\otimes \chi_1\omega^{-1}\to \bOmega (\mathscr{D}(\chi_2))\to \pi(\chi_1,\chi_2)^{\operatorname{bc}}\to 0
	\]
	of $P$-representations, in fact of $B=Z\times P$-representation if we let $Z\cong \bQ_p^{\times}$ act on the middle term via $\chi_1\chi_2$. Applying the functor $\Ind^{\wt{B}}_{\wt{B}_S}((\chi_1\chi_2)|_S\boxtimes (-) \boxtimes \iota)$, using Proposition \ref{Psi_Ind} and part (ii), we obtain a short exact sequence
	\[
	0\to \Ind^{\wt{T}}_{\wt{T}_S}(\chi_2\omega \otimes (\chi_1 \omega^{-1})|_S\boxtimes \iota)\to \wt{\bOmega}\wt{\mathbf{D}}(\tilde{\pi}(\chi_1,\chi_2))\to \tilde{\pi}(\chi_1,\chi_2)^{\operatorname{bc}}\to 0.
	\]
	
	Comparing this to the non-split sequence (\ref{ses_PS}) via the canonical map of interest, which is non-zero by Proposition \ref{can_B_equiv_map}, implies part (iii) except non-splitness, which however follows by observing that any splitting would dually give a non-zero map from the dual of the finite dimensional kernel to the $X$-torsionfree space $(\wt{\bOmega}\wt{\mathbf{D}}(\tilde{\pi}(\chi_1,\chi_2)))^{\vee}$.
\end{proof}

\begin{remark}\label{restrict_to_GQp_PS}
	Let $L=\bQ_{p^2}(\sqrt{p})$ be the abelian extension $\bQ_p$ with norm group $S$. By equation (\ref{res_to_S}), the underlying Galois representation of $\wt{\mathbf{V}}(\tilde{\pi}(\chi_1,\chi_2))$ is given by
	\[
	\mathbf{V}(\tilde{\pi}(\chi_1,\chi_2))\cong \Ind^{\mathscr{G}_{\bQ_p}}_{\mathscr{G}_L}(\chi_2|_{\mathscr{G}_L}),
	\]
	which is the direct sum over all characters of $\mathscr{G}_{\bQ_p}$ extending $\chi_2|_{\mathscr{G}_L}$.
\end{remark}

\subsubsection{Genuine supersingular} For computing the image of an absolutely irreducible genuine supersingular representation, we make use of Proposition \ref{class_irred}, which allows us to write such a representation as $\tilde{\pi}(r,0,\eta)$ for some $0\leq r\leq p-1$ with $r\neq \frac{p-1}{2}$ and a smooth character $\eta\colon \bQ_p^{\times}\to k^{\times}$. Since the functor $\wt{\mathbf{D}}$ is compatible with twist, Remark \ref{Dtilde_twist}, we may assume that $\eta$ is the trivial character, i.e.\ we only need to treat the representation $\tilde{\pi}(r,0)$.

Let now $I=\begin{mat}
	\bZ_p^{\times} & \bZ_p\\ p\bZ_p & \bZ_p^{\times}
\end{mat}$ be the Iwahori subgroup. It admits the semi-direct product decomposition $I=I_1\rtimes H$, where $I_1=\begin{mat}
	1+p\bZ_p & \bZ_p\\p\bZ_p & 1+p\bZ_p
\end{mat}$ is the pro-$p$ Iwahori subgroup and $H$ is the torus of $\GL_2(\bF_p)$ viewed as a subgroup of $I$ via the Teichmüller lift. Since the order of the finite abelian group $H$ is prime to $p$, every finite dimensional $k$-representation of $H$ decomposes as the direct sum of characters. Given a character $\chi\colon H\to \bF_p^{\times}$, we let $e_{\chi}\in \bF_p[H]$ denote the idempotent cutting out the $\chi$-isotypic subspace. 

\begin{definition}[{\cite[Definition 3.8]{witthaus_meta}}] For a character $\chi\colon H\to \bF_p^{\times}$ and $i,j\in \bZ/2\bZ$, define $\chi[i,j]=\chi\otimes (\omega^{i\frac{p-1}{2}}\otimes \omega^{j\frac{p-1}{2}})\colon H\to \bF_p^{\times}$.
\end{definition}

Given a character $\chi=\chi_1\otimes \chi_2$ of $H$, we put $\chi^s=\chi_2\otimes \chi_1$. Note that the element $\tilde{\Pi}=(\begin{mat}
	0 & 1\\ p & 0
\end{mat},1)\in \wt{G}$ normalizes $I_1$ and thus acts on the space of $I_1$-invariants $\pi^{I_1}$ of any $\wt{G}$-representation $\pi$. In fact, if $v\in \pi^{I_1}e_{\chi}$, then $\tilde{\Pi}v\in \pi^{I_1}e_{\chi^s[1,0]}$.\\

By definition, $\tilde{\pi}(r,0)$ is a quotient of $\cInd^{\wt{G}}_{\wt{K}Z(\wt{G})}(\Sym^r(k^2)\boxtimes \iota)$ and thus contains the weight $\Sym^r(k^2)$, whose $I_1$-invariants are one-dimensional, equal to the character $\chi:=\omega^r\otimes \mathbf{1}\colon H\to k^{\times}$. Let us fix a non-zero element $v_1\in \Sym^r(k^2)^{I_1}\subset \tilde{\pi}(r,0)^{I_1}$, and define $v_i=(\tilde{\Pi})^{i-1}v_1\in \tilde{\pi}(r,0)^{I_1}$ for $1\leq i\leq 4$. By \cite[Corollary 4.52]{witthaus_meta}, we then have $\tilde{\pi}(r,0)^{I_1}=\oplus_{i=1}^4 kv_i$. If we let $\chi_i\colon H\to k^{\times}$ denote the $H$-eigencharacter of $v_i$, then
\begin{equation}\label{H-eigencharacters}
	\chi_1=\chi, \hspace{0.2cm}\chi_2=\chi^s[1,0], \hspace{0.2cm} \chi_3=\chi[1,1], \hspace{0.2cm}  \chi_4=\chi^s[0,1].
\end{equation}
Moreover, Corollary 4.46 in \textit{loc.\ cit.} tells us that each $v_i$ generates a (smooth) irreducible $k$-representation of $K$, which we can then write as $\Sym^{r_i}(k^2)\otimes {\det}^{b_i}$ for a unique $0\leq r_i\leq p-1$ and $b_i\in \bZ/(p-1)\bZ$. In fact, if we put
\[
r'=\begin{cases}
	\frac{p-1}{2}-r \text{ if $0\leq r<\frac{p-1}{2}$}\\
	\frac{3(p-1)}{2}-r \text{ if $\frac{p-1}{2}<r\leq p-1$,}
\end{cases}
\]
then
\[
(r_1,b_1)=(r,0), \hspace{0.2cm} (r_2,b_2)=(r',r), \hspace{0.2cm} (r_3,b_3)=(r,\frac{p-1}{2}), \hspace{0.2cm} (r_4,b_4)=(r',r+\frac{p-1}{2}).
\]
Finally, let us define $s_i=r_{i+1}$ for $1\leq i\leq 4$ and
\[
c_1=(-1)^r (r')!, \hspace{0.2cm} c_2=(-1)^{\frac{p-1}{2}} r!, \hspace{0.2cm} c_3=(-1)^{r+\frac{p-1}{2}}(r')!, \hspace{0.2cm} c_4=(-1)^{\frac{p-1}{2}} r!.
\]

\begin{lem}[{\cite[Corollary 4.48]{witthaus_meta}}]\label{check_assumption}
	For each $1\leq i\leq 4$, $X^{s_i}F(v_i)=c_iv_{i+1}$.
\end{lem}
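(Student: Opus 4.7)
The plan is to realize each $v_i$ as an explicit class in the compact induction $\cInd^{\wt{G}}_{\wt{K}Z(\wt{G})}(\Sym^r(k^2)\boxtimes \iota)/(\wt{T})$, compute $F(v_i)$ via the left-translation action, and then identify $X^{s_i}F(v_i)$ with a scalar multiple of $v_{i+1}$ by an explicit calculation in the symmetric power. First I would normalize $v_1$ to be the class of the standard generator $[1, e^r]$, where $\{e,f\}$ is the standard basis of $k^2$ and $e^r\in \Sym^r(k^2)^{I_1}$ is the $I_1$-fixed vector with $H$-eigencharacter $\chi_1 = \omega^r\otimes \mathbf{1}$ from \eqref{H-eigencharacters}; then $v_i$ is represented by $[\tilde{\Pi}^{i-1}, e^r]$, and under the isomorphism of the $\wt{K}$-span of $v_i$ with $\Sym^{r_i}(k^2)\otimes{\det}^{b_i}$ the vector $v_i$ corresponds to the highest-weight vector.

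Next I would use the identities $\tilde{\Pi}^2\in \wt{Z}$ (since $\tilde{\Pi}^2$ projects to $pI\in G$) and $F\tilde{\Pi} = p\begin{mat} 0 & 1\\ 1 & 0\end{mat}\in KZ$ to bring $F\tilde{\Pi}^{i-1}$ into the form $\tilde{\Pi}^i\cdot k_i$ for an explicit element $k_i\in \wt{K}\wt{Z}$; the metaplectic cocycle \eqref{cocycle}, evaluated via the Hilbert symbol, contributes the signs $(-1)^r$ and $(-1)^{(p-1)/2}$ that appear in the constants $c_i$, the parity of $i$ controlling which signs are picked up. The relation $\wt{T}v_1=0$ then collapses the Iwahori-coset sum that arises when expressing $F$ in the compact induction, leaving $F(v_i)=[\tilde{\Pi}^i, u_i]$ with $u_i\in \Sym^{r_{i+1}}(k^2)\otimes{\det}^{b_{i+1}}$ proportional (up to the cocycle sign) to the lowest-weight vector $f^{r_{i+1}}$.

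Finally, the key combinatorial identity in $\Sym^m(k^2)$ is
\[
X^m f^m = m!\, e^m,
\]
which follows by induction on $m$ from the standard formula $X(e^a f^{m-a}) = \sum_{k\geq 1} \binom{m-a}{k}e^{a+k}f^{m-a-k}$: $X$ lowers the $f$-degree by one, and $m$ iterations accumulate the factorial. Applying this with $m=s_i = r_{i+1}$ projects $F(v_i)$ onto the $I_1$-fixed line $kv_{i+1}$ with coefficient $s_i!$; tallying against the sign computed in the previous step yields $X^{s_i}F(v_i) = c_iv_{i+1}$ with $c_i = (\pm) s_i!$ as announced.

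The main obstacle is the bookkeeping in the second step: bringing $F\tilde{\Pi}^{i-1}$ into the normal form $\tilde{\Pi}^i\cdot k_i$ by combining $F\tilde{\Pi} = p w_0$ with $\tilde{\Pi}^2\in \wt{Z}$ and the Hecke relation $\wt{T}v_1=0$, while tracking the $\mu_2$-valued contribution of the metaplectic $2$-cocycle through the quadratic Hilbert symbol. Once this normal form is in place, the projection onto the $I_1$-fixed line is a one-line calculation in the symmetric power, and the factorials $r!$ and $(r')!$ appear for exactly the expected reason, namely that $s_i\in \{r,r'\}$ alternates with $i$.
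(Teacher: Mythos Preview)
The paper does not prove this lemma; it simply cites \cite[Corollary 4.48]{witthaus_meta}. Your overall strategy---realize the $v_i$ in the compact induction, reduce $Fv_i$ to a lowest-weight vector in the $K$-span of $v_{i+1}$, then apply the identity $X^m f^m = m!\,e^m$ in $\Sym^m(k^2)$---is exactly the right one, and the symmetric-power identity is correct. But the key reduction step is misstated.

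Your normal form $F\tilde{\Pi}^{i-1}=\tilde{\Pi}^i\cdot k_i$ with $k_i\in\wt{K}\wt{Z}$ is false. A short computation gives $\tilde{\Pi}^{-1}F\tilde{\Pi}^0 = \begin{mat}0&1/p\\p&0\end{mat}$ (on the level of $G$), which does not lie in $KZ$, let alone in $KZ^{\sq}$; the same holds for all $i$ since these elements are $\tilde{\Pi}$-conjugates of one another. In particular the relation $[gh,v]=[g,hv]$ in the compact induction from $\wt{K}Z(\wt{G})$ cannot be invoked. What does hold is the relation with the compact element on the \emph{left}: from $F=w_0\tilde{\Pi}$ in $\wt{G}$ (the cocycle contribution here is trivial) one gets $F\tilde{\Pi}^{i-1}=w_0\tilde{\Pi}^i$ for every $i$, hence $Fv_i = w_0 v_{i+1}$ via the left $\wt{G}$-action. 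Now $w_0$ acts inside the $K$-span of $v_{i+1}$, which by \cite[Corollary 4.46]{witthaus_meta} is $\Sym^{r_{i+1}}(k^2)\otimes\det^{b_{i+1}}$ with $v_{i+1}$ the highest-weight vector; so $w_0v_{i+1}=(-1)^{b_{i+1}}f^{r_{i+1}}$ and your symmetric-power identity finishes the job, the signs $(-1)^{b_{i+1}}$ matching the stated $c_i$ (with an extra $(-1)^{(p-1)/2}$ for $i=4$ coming from $\tilde{\Pi}^4=(p^2,(-1)^{(p-1)/2})$ acting genuinely).

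Note also that once the normal form is corrected, the Hecke relation $\wt{T}v_1=0$ plays no direct role in the calculation: there is no ``Iwahori-coset sum'' to collapse, since $F$ acts by a single left translation. The dependence on the quotient to $\tilde{\pi}(r,0)$ enters only through the $K$-socle description from \cite[Corollary 4.46]{witthaus_meta}, which you already invoke separately.
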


Recall that $(-)|_S\colon \mphigamma\to \Mod^{\sm}_S(\phigamma)$ denotes the restriction functor. In the language of Galois representations, we denote this more accurately by $(-)|_{S\times \GQp}\colon{\Mod}^{\fin}_{\GQp,\iota}(k)\to \Mod^{\fin}_{S\times \GQp}(k)$.

\begin{prop}\label{image_ss}
	\begin{enumerate}
		\item[{\rm (i)}] The underlying Galois representation $\mathbf{V}(\tilde{\pi}(r,0))$ of the metaplectic Galois representation $\wt{\mathbf{V}}(\tilde{\pi}(r,0))$ is irreducible.
		\item[{\rm (ii)}] The restriction to $S\times \GQp$ is given by
		\[
		\wt{\mathbf{V}}(\tilde{\pi}(r,0))|_{S\times \GQp} \cong \omega^r|_S\boxtimes  \omega^{r-1}\mu_{\lambda} \otimes \Ind^{\GQp}_{\mathscr{G}_{\bQ_{p^4}}}(\omega_4^{\frac{p^2+1}{2}s}),
		\]
		where $\lambda^4=(-1)^{\frac{p-1}{2}}(r!)^2(r'!)^2$ and 
		\[
		s=\begin{cases}
			-2r+p \text{ if $r<\frac{p-1}{2}$}\\
			-2r+3p \text{ if $r>\frac{p-1}{2}$}.
		\end{cases}
		\]
		\item[{\rm (iii)}] The canonical $\wt{B}$-equivariant map $\wt{\bOmega}\wt{\mathbf{D}}(\tilde{\pi}(r,0))\xrightarrow{\cong} \tilde{\pi}(r,0)$ of Proposition \ref{can_B_equiv_map} is an isomorphism.
		
		\item[{\rm (iv)}] The metaplectic $(\varphi,\Gamma)$-module $\wt{\mathbf{D}}(\tilde{\pi}(r,0))$ is the unique object in $\mphigamma$ satisfying (ii) (after passing to Galois representations) such that the $\wt{B}$-action on $\wt{\bOmega}\wt{\mathbf{D}}(\tilde{\pi}(r,0))$ extends to a smooth $\wt{G}$-action.
	\end{enumerate}
\end{prop}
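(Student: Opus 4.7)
The plan is to handle the four parts sequentially, leveraging the general machinery of Corollary \ref{Galois_calculate} throughout. I take $M = \bigoplus_{i=1}^4 kv_i \subset \tilde{\pi}(r,0)^{I_1}$, which lies in $\mathcal{G}(\tilde{\pi}(r,0))$ since $v_1$ spans $\Sym^r(k^2)^{I_1}$ and $\Sym^r(k^2)$ generates $\tilde{\pi}(r,0)$ as a $\wt{G}$-representation. By Lemma \ref{check_assumption} we are then in the setup of Section \ref{section_Galois_calculate} with $n=4$, $s_i = r_{i+1}$, and the listed constants $c_i$.

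For (i) and (ii), I will first verify the hypothesis of Lemma \ref{basic_lemma}(iii) by reading off the $\Gamma$-eigencharacters of $v_1,\ldots,v_4$ from (\ref{H-eigencharacters}), namely $\omega^r,\ \omega^{(p-1)/2},\ \omega^{r+(p-1)/2},\ \mathbf{1}$, and checking that they are pairwise distinct in the generic case (while the $s_i$ differ in the boundary case $r=0$). This gives irreducibility of $\pi_M$ as a $P^+$-representation and of $\mathbf{D}(\tilde{\pi}(r,0))$ as a $(\varphi,\Gamma)$-module, proving (i) via Fontaine's equivalence. For (ii), the explicit formulas of Corollary \ref{Galois_calculate} yield $\lambda^4 = \prod_i c_i = (-1)^{(p-1)/2}(r!)^2(r'!)^2$ and, after a short arithmetic simplification of $\frac{p^3r'+p^2r+pr'+r}{p-1}=(p^2+1)\frac{pr'+r}{p-1}$, the exponent of $\omega_4$ becomes $\frac{p^2+1}{2}s$ with $s$ as stated. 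The $S$-component of the restriction comes from Lemma \ref{Ztilde_action}: the $\wt{Z}$-action on $\wt{\mathbf{D}}(\tilde{\pi}(r,0))$ is induced by the $\wt{Z}$-action on $\tilde{\pi}(r,0)$, which restricted to $S\subset Z(\wt{G})$ equals the central character $\omega^r|_S$ read off from the realization of $\tilde{\pi}(r,0)$ as a quotient of $\cInd^{\wt{G}}_{\wt{K}Z(\wt{G})}(\Sym^r(k^2)\boxtimes \iota)$.

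For (iii), the essential input is the equality $\mathbf{D}(\tilde{\pi}(r,0))^{\sharp} = \pi_M^{\vee}$ that emerges in the proof of Corollary \ref{Galois_calculate} (via Example \ref{D_nat=D_sharp}(ii) and the irreducibility of $\pi_M$). Since $F$ acts bijectively on $\tilde{\pi}(r,0) = \pi_M[F^{-1}]$, Pontryagin duality identifies $\tilde{\pi}(r,0)^{\vee} \cong \varprojlim_n (F^{-n}\pi_M)^{\vee} \cong \varprojlim_{F^{\vee}} \pi_M^{\vee}$, and under the identification $\pi_M^{\vee} = \mathbf{D}(\tilde{\pi}(r,0))^{\sharp}$ the operator $F^{\vee}$ corresponds to $\psi$. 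Hence the Pontryagin dual of the map in Proposition \ref{can_B_equiv_map} is a bijection onto $\psi^{-\infty}(\mathbf{D}(\tilde{\pi}(r,0))')$, yielding a $P$-equivariant isomorphism; the $\wt{Z}$-equivariance built into the construction of $\wt{\bOmega}$ upgrades this to the desired $\wt{B}$-equivariant isomorphism.

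The hard part will be (iv). Suppose $(D,\theta) \in \mphigamma$ satisfies (ii) and $\wt{\bOmega}(D,\theta)|_{\wt{B}}$ admits a smooth $\wt{G}$-extension $\pi'$. Condition (ii) together with Fontaine's equivalence determines $D$ up to isomorphism as a $(\varphi,\Gamma)$-module and fixes the $S$-part of $\theta$. By (iii) applied to $\tilde{\pi}(r,0)$, we have $\wt{\bOmega}(D,\theta)|_P \cong \bOmega(\mathbf{D}(\tilde{\pi}(r,0))) \cong \tilde{\pi}(r,0)|_P$, so $\pi'|_P$ is irreducible and isomorphic to $\tilde{\pi}(r,0)|_P$. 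This forces $\pi'$ itself to be irreducible (any proper $\wt{G}$-subrepresentation would give a proper $P$-subrepresentation), while admissibility and finite generation are inherited from the finite-dimensionality of the $I_1$-invariants of $\wt{\bOmega}(D,\theta)$. The classification of Proposition \ref{class_irred} together with the fact from \cite{witthaus_meta} that two absolutely irreducible genuine supersingular representations are isomorphic if and only if their restrictions to $P$ are then forces $\pi' \cong \tilde{\pi}(r,0)$, whence $(D,\theta) \cong \wt{\mathbf{D}}(\tilde{\pi}(r,0))$ by functoriality. The most delicate point is verifying admissibility of $\pi'$ without circularity; I expect this to follow directly from the explicit structure of $\bOmega(\mathbf{D}(\tilde{\pi}(r,0)))$, whose $I_1$-invariants inject into $\pi_M$ and are thus finite-dimensional.
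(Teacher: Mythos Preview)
Your treatment of (i)--(iii) is essentially the paper's own argument, modulo one slip: the space $M=\bigoplus_i kv_i$ is \emph{not} $\wt K\wt Z$-stable (each $v_i$ generates a full $K$-weight $\Sym^{r_i}\otimes\det^{b_i}$), so $M\notin\mathcal G(\tilde\pi(r,0))$. This is harmless, since what you actually need is $\pi_M\in\mathcal M(\tilde\pi(r,0))$ together with $\pi_M[F^{-1}]=\tilde\pi(r,0)$ (the latter is \cite[Proposition~4.49]{witthaus_meta}), and then Proposition~\ref{stabilize} gives $\mathbf D(\tilde\pi(r,0))'=\pi_M^\vee[1/X]$. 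Also, your ``boundary case $r=0$'' should include $r=p-1$.

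The genuine issue is in (iv). From $\pi'\cong\tilde\pi(r,0)$ you conclude $(D,\theta)\cong\wt{\mathbf D}(\tilde\pi(r,0))$ ``by functoriality'', but functoriality only gives $\wt{\mathbf D}(\pi')\cong\wt{\mathbf D}(\tilde\pi(r,0))$, and $\wt{\mathbf D}(\pi')=\wt{\mathbf D}(\wt{\bOmega}(D,\theta))$. You are implicitly assuming a natural isomorphism $\wt{\mathbf D}\circ\wt{\bOmega}\cong\mathrm{id}$, which the paper never establishes. Schur's lemma (both $D$ and $\bOmega(D)$ are absolutely irreducible) only tells you that $\wt{\mathbf D}(\wt{\bOmega}(D,\theta))\cong(D,c\theta)$ for some character $c$ of $\wt{\bQ_p^{\times}}/\wt S$, and you still have to show $c=1$. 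This can be done (feed $(D,\theta_0)=\wt{\mathbf D}(\tilde\pi(r,0))$ through the same machine and use (iii) to see that $\wt{\mathbf D}\wt{\bOmega}$ fixes it, forcing $c=1$ since $c$ is independent of $\theta$), but it is a real argument, not ``functoriality''.

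By contrast, the paper's proof of (iv) avoids this detour entirely: it fixes $D$ and compares two metaplectic structures $\theta_0,\lambda$ directly via Schur, obtaining $\lambda=\theta_0\otimes\varepsilon$ for a character $\varepsilon$ of $(\bZ/2)^2$, whence $\wt{\bOmega}(D,\lambda)\cong\tilde\pi(r,0)\otimes(\mathbf 1\otimes\varepsilon)$ as $\wt B$-representations. It then shows by an explicit calculation with the $H$-eigencharacters $\chi_i$ of the $v_i$ and the relations $X^{s_i}F(v_i)=c_iv_{i+1}$ that a $\wt B$-isomorphism $\tilde\pi(r,0)\cong\tilde\pi(r,0)\otimes(\mathbf 1\otimes\varepsilon)$ forces $\varepsilon=\mathbf 1$. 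Your route, once the gap is filled, is more conceptual but ultimately rests on the same absolute irreducibility; the paper's route is more hands-on and self-contained.
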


\begin{proof}
	First, note that the center $Z(\wt{G})\cong S\times \mu_2$ acts on $\tilde{\pi}(r,0)$ via the genuine character $\omega^r|_S\boxtimes \iota$, which trivially implies that the action of $S$ on $\wt{\mathbf{V}}(\tilde{\pi}(r,0))$ is given by $\omega^r|_S$, so it is enough to determine the underlying Galois representation. The previous discussion places us in the setup of Section \ref{Galois_calculate} with $M=\tilde{\pi}(r,0)^{I_1}$. By \cite[Proposition 4.49]{witthaus_meta}, we have $\tilde{\pi}(r,0)=\pi_M[F^{-1}]$, where $\pi_M$ is the $P^+$-subrepresentation generated by $M$. By Lemma \ref{basic_lemma} (i) and (ii), the subspace of $X$-torsion elements in $\pi_M$ is equal to $M$ (which is finite dimensional), hence $\pi_M\in \mathcal{M}(\pi)$. We may therefore apply Proposition \ref{stabilize} to deduce that $\mathbf{D}(\tilde{\pi}(r,0))=(\pi_M^{\vee}[1/X])'$. Corollary \ref{Galois_calculate}, which we may apply by Lemma \ref{check_assumption}, now proves part (i) and (ii). The proof of it also shows that $\pi_M^{\vee}=(\pi_M^{\vee}[1/X])^{\natural}$, which is equal to $(\pi_M^{\vee}[1/X])^{\sharp}$ by Example \ref{D_nat=D_sharp} (ii). The canonical map $\wt{\bOmega}\wt{\mathbf{D}}(\tilde{\pi}(r,0))\to \tilde{\pi}(r,0)$ being an isomorphism is now just a reformulation of the equality $\tilde{\pi}(r,0)=\pi_M[F^{-1}]=\varinjlim_F \pi_M$.
	
	For part (iv), write $\wt{\mathbf{D}}(\tilde{\pi}(r,0))=(D,\theta)$. Suppose that $\lambda\colon \wt{\bQ_p^{\times}}\to \Aut_{k((X))}(D)$ is another action such that $(D,\lambda)$ is a metaplectic $(\varphi,\Gamma)$-module satisfying $\lambda|_{\wt{S}}=\theta|_{\wt{S}}$. Then the $(\varphi,\Gamma)$-equivariant action of $\wt{\bQ_p^{\times}}$ on $D$ defined by $\lambda\theta^{-1}$ factors through $\wt{S}\setminus \wt{\bQ_p^{\times}}\cong \bZ/2\bZ \times \bZ/2\bZ$. Since $D$ is absolutely irreducible (e.g.\ by part (ii)), Schur's Lemma implies that this action is given by scalars. In conclusion, $\lambda=\theta \otimes \varepsilon$ for some $\varepsilon=\mu_{-1}^{a}\omega^{b\frac{p-1}{2}}$ for $a,b\in \bZ/2\bZ$. In particular, there is a $\wt{B}$-equivariant isomorphism
	\[
	\wt{\bOmega}(D,\lambda)\cong \wt{\bOmega}(D,\theta)\otimes (\mathbf{1}\otimes \varepsilon),
	\]
	where the character $\mathbf{1}\otimes \varepsilon$ of the torus $T$ is viewed as a character of $\wt{B}$ via inflation. By part (iii), we are reduced to showing that the $\wt{B}$-action on $\tilde{\pi}(r,0)\otimes (\mathbf{1}\otimes \varepsilon)$ extends to a smooth $\wt{G}$-action if and only if $\varepsilon$ is trivial. Assume that the action does extend. It must then necessarily define a genuine supersingular representation (as its restriction to $\wt{B}$ is irreducible). By \cite[Corollary 4.55]{witthaus_meta}, the isomorphism class of a genuine supersingular representation is determined by the restriction to $P$. In particular, we obtain a $\wt{B}$-equivariant (in fact, $\wt{G}$-equivariant) isomorphism $f\colon \tilde{\pi}(r,0)\cong \tilde{\pi}(r,0)\otimes (\mathbf{1}\otimes \varepsilon)$. We first check that $\varepsilon$ is unramified. The argument at the beginning of the proof of the corollary just used goes through to show that $f$ restricts to a $\Gamma$-equivariant isomorphism $M\cong M\otimes (\mathbf{1}\otimes \varepsilon)$. We repeat it for the convenience of the reader: Since $\tilde{\pi}(r,0)=\pi_M[F^{-1}]$, we may choose $m\gg 0$ with $F^{4m}f(M)\subset \pi_M\otimes (\mathbf{1}\otimes \varepsilon)$. An iteration of the relations in Lemma \ref{check_assumption} implies that $X^{e(i)_m}F^{4m}v_i=c^mv_i$ for $e(i)_m$ and $c$ as defined prior to (\ref{iso_pi_{i,m}}), and so $f(M)\subset \pi_M\otimes (\mathbf{1}\otimes \varepsilon)$. Comparing $X$-torsion shows that $f(M)\subset M\otimes (\mathbf{1}\otimes \varepsilon)$, which must be an equality for dimension reasons. Recall that $\chi_i$ denotes the $H$-eigencharacter of $v_i$. The $H$-equivariance of $f$ forces $f(v_i)$ to lie in the subspace $(M\otimes (\mathbf{1}\otimes \varepsilon))e_{\chi_i}=Me_{\chi_i[0,b]}$. The explicit description of the characters (\ref{H-eigencharacters}) implies that, if $b\neq 0$, then this space can only possibly be non-zero if $\chi_i[0,b]=\chi_j$ with $j-i\equiv 1 \bmod 2$, in which case $s_i\neq s_j$. Write $f(v_i)=av_j$ for some $a\in k^{\times}$. By symmetry, we may assume that $s_i>s_j$. Applying $X^{s_i}F$ and utilizing Lemma \ref{check_assumption} once again, gives the contradiction
	\[
	0\neq c_if(v_{i+1})=f(X^{s_i}F(v_i))=aX^{s_i}F(v_j)=ac_jX^{s_i-s_j}v_{j+1}=0
	\]
	since $M$ is killed by $X$. This proves that $b=0$, i.e.\ $\varepsilon$ is unramified, and we must have $f(v_i)=a_iv_i$ for some $a_i\in k^{\times}$ for all $1\leq i\leq 4$. Applying $f$ to the relation $X^{s_i}F(v_i)=c_iv_{i+1}$ forces $a_i=a_{i+1}$ for all $1\leq i\leq 4$. However, the $\tilde{\Pi}^2$-equivariance of $f$ implies that $a_3=\varepsilon(p)a_1$ and thus $\varepsilon(p)=1$. In conclusion, $\varepsilon$ is the trivial character.
\end{proof}

By definition, the underlying Galois representation of a metaplectic one is invariant under twist by any character of order two. The unramified part of such a character is trivial when restricted to $\mathscr{G}_{\bQ_{p^2}}$ (hence also when restricted to $\mathscr{G}_{\bQ_{p^4}}$), so the interesting part left when considering absolutely irreducible representation of dimension four is invariance under twist by the character $\omega^{\frac{p-1}{2}}$.

\begin{lem}\label{galois_lemma1}
	Up to twist by a character, the absolutely irreducible Galois representations of $\GQp$ on $4$-dimensional $k$-vector spaces invariant under twist by $\omega^{\frac{p-1}{2}}$ are precisely those of the form $\Ind^{\mathscr{G}_{\bQ_p}}_{\mathscr{G}_{\bQ_{p^4}}}(\omega_4^{\frac{p^2+1}{2}h})$ for some odd integer $h\in \bZ$ with $1\leq h\leq 2(p^2-1)$
\end{lem}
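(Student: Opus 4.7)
The plan is to reduce this to a congruence problem on the exponent $m$ parameterizing a representation $\Ind^{\GQp}_{\mathscr{G}_{\bQ_{p^4}}}(\omega_4^m)$, and then solve it by a short case analysis. By Proposition \ref{irred_Galois}, every absolutely irreducible $4$-dimensional object of $\Mod^{\fin}_{\GQp}(k)$ is, up to unramified twist, of the form $\Ind^{\GQp}_{\mathscr{G}_{\bQ_{p^4}}}(\omega_4^m)$ with $m\in[1,p^4-2]$ primitive. Up to twist by arbitrary characters we can additionally shift $m$ by multiples of $M:=(p^4-1)/(p-1)=p^3+p^2+p+1$, since $\omega|_{\mathscr{G}_{\bQ_{p^4}}}=\omega_4^M$. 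Mackey tells us that $\Ind(\omega_4^{m_1})\cong\Ind(\omega_4^{m_2})$ iff $m_2\equiv p^im_1\pmod{p^4-1}$ for some $i\in\{0,1,2,3\}$. Combining this with the identity $\omega^{(p-1)/2}|_{\mathscr{G}_{\bQ_{p^4}}}=\omega_4^{(p^4-1)/2}$, the invariance condition $\Ind(\omega_4^m)\otimes\omega^{(p-1)/2}\cong\Ind(\omega_4^m)$ becomes
\[
(p^i-1)m \equiv (p^4-1)/2 \pmod{p^4-1} \text{ for some } i\in\{1,2,3\}
\]
(the case $i=0$ being immediately impossible).

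For $i=2$, I would rewrite $(p^4-1)/2=(p^2-1)(p^2+1)/2$ and divide the congruence by $p^2-1$ to obtain $m\equiv(p^2+1)/2\pmod{p^2+1}$, equivalently $m=(p^2+1)h/2$ for some odd integer $h$. Conversely, any such $m$ satisfies the invariance. For primitivity one checks that $m=(p^2+1)h/2$ is divisible by $p^2+1$ iff $h$ is even, while divisibility by $M=(p+1)(p^2+1)$ requires $h$ divisible by $2(p+1)$, which for odd $h$ is automatic. Thus odd $h$ corresponds exactly to primitive $m$. Letting $h$ range over the odd integers in $[1,2(p^2-1)]$ — a complete set of representatives modulo $2(p^2-1)$ — exhausts all valid $m$ modulo $p^4-1$, producing the family claimed in the statement.

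The cases $i=1$ and $i=3$ I would dispatch together: multiplying the $i=3$ congruence by $p$ and using $p^4\equiv 1\pmod{p^4-1}$ reduces it to the $i=1$ case $(p-1)m\equiv (p^4-1)/2=(p-1)M/2\pmod{(p-1)M}$, which forces $m\equiv M/2\pmod M$. So up to twist the only representative is $m=M/2=(p+1)(p^2+1)/2$. But $m/(p^2+1)=(p+1)/2\in\bZ$ (since $p$ is odd), so this $m$ is divisible by $p^2+1$, hence not primitive. Divisibility by $p^2+1$ is preserved under multiplication by $p$, so no Galois conjugate of $M/2$ is primitive either, and these two cases contribute nothing.

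The main subtlety I anticipate is the interplay between the two equivalence relations on $m$ — twist equivalence (shifts by $M$) and Mackey equivalence (multiplication by $p$) — and making sure that odd parity of $h$ is precisely the condition picking out primitive $m$ in case $i=2$. Once these reductions are carried out cleanly, everything else is routine divisibility manipulation in $\bZ/(p^4-1)\bZ$.
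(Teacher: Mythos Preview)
Your proof is correct and takes essentially the same approach as the paper: reduce invariance to the congruence $(p^i-1)m \equiv (p^4-1)/2 \pmod{p^4-1}$, dispatch $i=0$ trivially, reduce $i=3$ to $i=1$ by multiplying by $p$, rule out $i=1$ because every solution is divisible by $p^2+1$ (hence not primitive), and identify the $i=2$ solutions with the claimed family. Two cosmetic points: ``which for odd $h$ is automatic'' should read ``automatically excluded'', and the remark about Galois conjugates of $M/2$ is superfluous---the relevant fact is simply that every $m\equiv M/2\pmod{M}$ is divisible by $p^2+1$ (since both $M/2$ and $M$ are), which already suffices.
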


\begin{proof}
	We first show that the representations of the form $\Ind^{G_{\bQ_p}}_{G_{\bQ_{p^4}}}(\omega_4^{\frac{p^2+1}{2}h})$, with $1\leq h\leq 2(p^2-1)$ odd, are irreducible and invariant under twist by $\omega^{\frac{p-1}{2}}$. For irreducibility, it suffices to show, by Proposition \ref{irred_Galois}, that $\frac{p^2+1}{2}h$ is primitive relative to $4$, i.e.\ that $\frac{p^2+1}{2}h$ is not divisible by $\frac{p^4-1}{p^{d}-1}$ for $d=1,2$. This is clear since $h$ is odd. The invariance under twist by $\omega^{\frac{p-1}{2}}$ is equivalent to $\omega^{\frac{p-1}{2}}$ being equal to $\omega_4^{(p^{i}-1)\frac{p^2+1}{2}h}$ for some $0\leq i\leq 3$. Noting that $\omega^{\frac{p-1}{2}}=\omega_4^{\frac{p^4-1}{2}}$, it is enough to observe that, since $h$ is odd, we have 
	\[
	\frac{p^4-1}{2}\equiv \frac{p^4-1}{2}h=(p^2-1)\frac{p^2+1}{2}h \bmod p^4-1.
	\]
	Assume now that $\rho$ is an arbitrary $4$-dimensional absolutely irreducible Galois representation invariant under twist by $\omega^{\frac{p-1}{2}}$. By the proposition just used, we may, after possibly twisting, write $\rho\cong \Ind^{\mathscr{G}_{\bQ_p}}_{\mathscr{G}_{\bQ_{p^4}}}(\omega_4^s)$ for some $1\leq s\leq p^4-1$ primitive relative to $4$. Invariance under twist by $\omega^{\frac{p-1}{2}}$ gives $\frac{p^4-1}{2}\equiv (p^{i}-1)s \bmod p^4-1$ for some $0\leq i\leq 3$. The case $i=0$ is excluded for trivial reasons. If $i=3$, then multiplying by the odd prime $p$ gives
	\[
	\frac{p^4-1}{2}\equiv p\frac{p^4-1}{2}\equiv (p^4-p)s\equiv -(p-1)s \bmod p^4-1,
	\]
	or equivalently $\frac{p^4-1}{2}\equiv (p-1)s \bmod p^4-1$, which is the case $i=1$. In the cases $i=1,2$, we obtain an equality of the form $s=(2a+1)\frac{p^2-1}{p^{i}-1}\frac{p^2+1}{2}$, for some $a\in \bZ$. If $i=1$, then this implies that $\frac{p^4-1}{p^2-1}=p^2+1$ divides $s$, contradicting the irreducibility of $\rho$ (or equivalently primitivity of $s$). In case $i=2$, we obtain the desired result.
\end{proof}

\begin{lem}\label{galois_lemma2}
	Given an odd integer $h\in \bZ$, there exist integers $a,h'\in \bZ$ with $3\leq h'\leq 2p-1$ odd such that
	\[
	\Ind^{\mathscr{G}_{\bQ_p}}_{\mathscr{G}_{\bQ_{p^4}}}(\omega_4^{\frac{p^2+1}{2}h})\cong \omega^{a}\otimes \Ind^{\mathscr{G}_{\bQ_p}}_{\mathscr{G}_{\bQ_{p^4}}}(\omega_4^{\frac{p^2+1}{2}h'}).
	\]
\end{lem}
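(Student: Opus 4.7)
The plan is to translate twisting by $\omega^a$ into shifting the exponent of $\omega_4$, apply the Mackey criterion for induced characters from the unramified degree-4 extension, and then solve the resulting congruence modulo $2(p+1)$.

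By transitivity of Serre's fundamental characters, $\omega|_{\mathscr{G}_{\bQ_{p^4}}} = \omega_4^{p^3+p^2+p+1}$, and since $p^3+p^2+p+1 = 2(p+1)\cdot \frac{p^2+1}{2}$, the projection formula gives
\[
\omega^a \otimes \Ind^{\mathscr{G}_{\bQ_p}}_{\mathscr{G}_{\bQ_{p^4}}}\bigl(\omega_4^{\frac{p^2+1}{2}h'}\bigr) \cong \Ind^{\mathscr{G}_{\bQ_p}}_{\mathscr{G}_{\bQ_{p^4}}}\bigl(\omega_4^{\frac{p^2+1}{2}(h'+2a(p+1))}\bigr).
\]
By the usual Mackey criterion, two such inductions are isomorphic iff their exponents differ by a factor of $p^i$ modulo $p^4-1$, for some $0\leq i\leq 3$. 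Noting that $\tfrac{p^2+1}{2}$ is odd (as $p^2\equiv 1\pmod 8$) and coprime to $p^2-1$ (from $(p^2+1)-(p^2-1)=2$), and using $p^4-1 = 2(p^2-1)\cdot\tfrac{p^2+1}{2}$, I would reduce the desired isomorphism to the congruence
\[
h' + 2a(p+1) \equiv p^i h \pmod{2(p^2-1)}.
\]

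Reducing this modulo $2(p+1)$ kills the $a$-term, and since $p-1$ is even so that $2(p+1)\mid p^2-1$, the $p$-powers collapse to $\{1,p\}$ modulo $2(p+1)$. It therefore suffices to find odd $h'\in[3,2p-1]$ with $h'\equiv h$ or $h'\equiv ph\pmod{2(p+1)}$; given such an $h'$, the difference $p^i h - h'$ is divisible by $2(p+1)$, and the full congruence modulo $2(p^2-1)$ then determines $a$ uniquely modulo $p-1$. The odd integers $3,5,\ldots,2p-1$ comprise $p-1$ distinct residues modulo $2(p+1)=2p+2$, exhausting all odd residues except $1$ and $-1\equiv 2p+1$. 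So setting $r:=h\bmod 2(p+1)$, I would distinguish: if $r\notin\{\pm 1\}$, take $h'=r$ with $i=0$; if $r=1$, take $h'=p$ (so $ph\equiv p\pmod{2(p+1)}$); if $r=-1$, take $h'=p+2$ (so $ph\equiv -p\equiv p+2\pmod{2(p+1)}$). In the two exceptional cases $p,p+2\in[3,2p-1]$ for $p\geq 3$ and are odd, completing the proof.

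The only real subtlety is the bookkeeping around the two ``bad'' residues $h\equiv\pm 1\pmod{2(p+1)}$, where a genuine twist by $\omega^a$ is needed to shift the representative into the range $[3,2p-1]$; everything else is routine arithmetic.
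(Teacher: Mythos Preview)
Your proof is correct and rests on the same two moves as the paper's: twisting by $\omega^a$ shifts $h$ by $2a(p+1)$, and Frobenius intertwining replaces $h$ by $ph$ modulo $2(p^2-1)$. The difference is purely organizational. The paper works with base-$p$ digits of $h$ and walks through several sub-cases to force $h'$ into $[1,2p-1]$, then separately upgrades $h'=1$ to $h'=p$ at the end; you instead reduce immediately to residues modulo $2(p+1)$, observe that the odd integers in $[3,2p-1]$ hit every odd residue except $\pm 1$, and dispatch those two exceptions with the Frobenius move. Your route is shorter and makes the structure more transparent, while the paper's digit calculation is more explicit about the actual value of $a$ at each step. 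One minor remark: in your closing sentence, the twist by $\omega^a$ is already used in the generic case to pass from $h$ to its residue $r$; what is \emph{additionally} needed in the two exceptional cases is the Frobenius intertwining, not the twist.
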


\begin{proof}
	We first show that we can choose $1\leq h'\leq 2p-1$ (i.e.\ including $h'=1$) appropriately.
	Since $\omega_4$ has order $p^4-1$, we may without loss of generality assume that $0\leq h < 2(p^2-1)$. So, if $h=h_0+ph_1+p^2h_2$ denotes the base-$p$ expansion with $0\leq h_i\leq p-1$, then $h_2\in \{0,1\}$. Next, note that replacing $h$ by $h-2a(p+1)$ for some $a\in \bZ$ amounts to twisting the induced representation by $\omega^{-a}$. If $h_2=1$, we take $a=\frac{p-1}{2}$ and may therefore assume that $0\leq h < p^2-1$, whence $h_2=0$. Choose $a\in \bZ$ such that $h_1':=h_1-2a\in \{0,1\}$ and put $h_0':=h_0-2a$. We may replace $h$ by $h'=h-2a(p+1)=h_0'+ph_1'$. Note that $|h_0'|\leq p-1$. If $h_0'\geq 0$ or $h_1'=1$, then $1\leq h'\leq 2p-1$ as desired. Assume now that $h_1'=0$ and $h'_0<0$. Replace $h'=h'_0$ by $h'_0+2(p+1)=2p+(2+h'_0)$. If $-h'_0\geq 3$, then we are done; otherwise $h'_0=-1$ (as it must be odd), in which case we use the intertwining
	\[
	\Ind^{\mathscr{G}_{\bQ_p}}_{\mathscr{G}_{\bQ_{p^4}}}(\omega_4^{\frac{p^2+1}{2}(2p+1)})\cong 	\Ind^{\mathscr{G}_{\bQ_p}}_{\mathscr{G}_{\bQ_{p^4}}}(\omega_4^{p\frac{p^2+1}{2}(2p+1)}) =	\Ind^{\mathscr{G}_{\bQ_p}}_{\mathscr{G}_{\bQ_{p^4}}}(\omega_4^{\frac{p^2+1}{2}(p+2)}).
	\]
	This proves the statement with $1\leq h'\leq 2p-1$ odd. Similar to the first isomorphism in the above intertwining, we may replace $h'$ by $ph'$ and so replace $h'=1$ by $p$. In particular, we may assume $3\leq h'\leq 2p-1$.
\end{proof}

\begin{thm}\label{ss_bij_galois}
	The functor $\mathbf{V}\colon \Mod^{\fadm}_{\wt{G}}(k)\to \Mod^{\fin}_{\mathscr{G}_{\bQ_p}}(k)$ induces a bijection
	\[
	\left\{\begin{array}{c}
		\text{Absolutely irreducible}\\
		\text{genuine supersingular}\\
		\text{$k$-representations of $\wt{G}$}
	\end{array}\right\} \cong \left\{\begin{array}{c}
		\text{Absolutely irreducible}\\
		\text{smooth $\rho\colon \GQp\to \GL_4(k)$}\\
		\text{s.t.\ $\rho\cong \rho\otimes \omega^{\frac{p-1}{2}}$}
	\end{array}\right\},
	\]
	both sides considered up to isomorphism.
\end{thm}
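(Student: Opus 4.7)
By Proposition \ref{class_irred}, every absolutely irreducible genuine supersingular $k$-representation of $\wt{G}$ is of the form $\tilde{\pi}(r,0,\eta)$ for some $r\in\{0,\ldots,p-1\}\setminus\{(p-1)/2\}$ and smooth character $\eta\colon\bQ_p^{\times}\to k^{\times}$, subject to the equivalence recorded in part (ii) of that proposition. Combining Proposition \ref{image_ss}(ii) with the twist-compatibility Remark \ref{Dtilde_twist}, the image is
\[
\mathbf{V}(\tilde{\pi}(r,0,\eta))\cong \eta\cdot\omega^{r-1}\mu_{\lambda(r)}\otimes\Ind^{\GQp}_{\mathscr{G}_{\bQ_{p^4}}}\bigl(\omega_4^{\frac{p^2+1}{2}s(r)}\bigr),
\]
where $s(r)=p-2r$ for $r<(p-1)/2$ and $s(r)=3p-2r$ for $r>(p-1)/2$, and $\lambda(r)\in\bar{k}^{\times}$ is the explicit fourth root from Proposition \ref{image_ss}(ii). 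The plan is to match the two sides via these parametrizations.

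For surjectivity, one checks directly that $r\mapsto s(r)$ is a bijection from $\{0,\ldots,p-1\}\setminus\{(p-1)/2\}$ onto the odd integers in $\{3,5,\ldots,2p-1\}$: the two halves $r<(p-1)/2$ and $r>(p-1)/2$ surject onto $\{3,5,\ldots,p\}$ and $\{p+2,\ldots,2p-1\}$, respectively, each of which has $(p-1)/2$ elements. Given any target $\rho$, Lemmas \ref{galois_lemma1} and \ref{galois_lemma2} yield $\rho\cong\chi\otimes\Ind^{\GQp}_{\mathscr{G}_{\bQ_{p^4}}}(\omega_4^{\frac{p^2+1}{2}h})$ for some smooth character $\chi$ of $\GQp$ and odd $h\in\{3,\ldots,2p-1\}$. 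Choosing $r$ with $s(r)=h$, it remains to realize a smooth character $\eta\colon\bQ_p^{\times}\to k^{\times}$ for which $\eta\omega^{r-1}\mu_{\lambda(r)}$ matches $\chi$ up to the $\Gal(\bQ_{p^4}/\bQ_p)$-orbit of the inducing character; the latter ambiguity is sufficient to absorb the possibly non-$k$-rational fourth root $\lambda(r)$, because the $k$-rationality of $\rho$ constrains the product of the four Galois conjugates to lie in $k^{\times}$.

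For injectivity, suppose $\mathbf{V}(\tilde{\pi}(r,0,\eta))\cong\mathbf{V}(\tilde{\pi}(r',0,\eta'))$. Since both are absolutely irreducible inductions from $\mathscr{G}_{\bQ_{p^4}}$, Mackey's criterion forces their inducing characters on $\mathscr{G}_{\bQ_{p^4}}$ to be $\Gal(\bQ_{p^4}/\bQ_p)$-conjugate. Restricting to inertia isolates the $\omega_4$-contribution and produces a congruence $s(r)\equiv p^i s(r')\pmod{p^4-1}$ modulo shifts accounting for characters of $\GQp$, for some $0\leq i\leq 3$; using the explicit form of $s$, this forces $r'$ into the set $\{r,\,(p-1)/2-r,\,3(p-1)/2-r\}\cap(\{0,\ldots,p-1\}\setminus\{(p-1)/2\})$, exactly mirroring the case distinction in Proposition \ref{class_irred}(ii). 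Comparing the unramified factor $\eta\mu_{\lambda(r)}$ and the tame non-$\omega_4$ part then produces the constraints $\eta(p^4)=\eta'(p^4)$ and the appropriate restriction of $\eta/\eta'$ to $\bZ_p^{\times,\sq}$, yielding $\tilde{\pi}(r,0,\eta)\cong\tilde{\pi}(r',0,\eta')$. The main obstacle is this final bookkeeping: one must carefully track how the $\Gal(\bQ_{p^4}/\bQ_p)$-action on the $\mathscr{G}_{\bQ_{p^4}}$-character interacts with the factor $(p^2+1)/2$ in the exponent and with the Frobenius orbit of $\lambda(r)$, and then identify the resulting symmetry on $(r,\eta)$ precisely with the one in Proposition \ref{class_irred}(ii), including the appearance of the twist by $\omega^{r'}|_{\bZ_p^{\times,\sq}}$.
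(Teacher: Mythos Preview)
Your surjectivity argument follows the same route as the paper: reduce via Lemmas \ref{galois_lemma1} and \ref{galois_lemma2} to $\rho\cong\chi\otimes\Ind(\omega_4^{\frac{p^2+1}{2}h'})$ with $3\le h'\le 2p-1$ odd, then match with Proposition \ref{image_ss}(ii). Your explicit verification that $r\mapsto s(r)$ bijects onto $\{3,5,\ldots,2p-1\}$ is a useful detail the paper leaves implicit. One caution: your sentence ``the latter ambiguity is sufficient to absorb the possibly non-$k$-rational fourth root $\lambda(r)$, because the $k$-rationality of $\rho$ constrains the product of the four Galois conjugates to lie in $k^{\times}$'' is not a proof. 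What you need is $\eta(p)\in k^{\times}$ with $\eta(p)^4=\lambda^4/\lambda(r)^4$, and knowing only that the right-hand side lies in $k^{\times}$ does not produce a fourth root in $k^{\times}$ when $4\mid(|k|-1)$. The paper's proof is equally brief on this point, so you are not worse off, but your phrasing suggests the issue is resolved when it is not.

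For injectivity your approach is \emph{genuinely different} from the paper's, and more laborious. The paper does not do any parameter matching at all: it observes that by Proposition \ref{image_ss}(iii) the canonical map $\bOmega(\mathbf{D}(\pi))\to\pi|_P$ is an isomorphism for $\pi$ genuine supersingular (and this isomorphism uses only the $(\varphi,\Gamma)$-module, not the metaplectic structure). Hence $\mathbf{D}(\pi_1)\cong\mathbf{D}(\pi_2)$ forces $\pi_1|_P\cong\pi_2|_P$, and then \cite[Corollary 4.55]{witthaus_meta} says the restriction to $P$ determines the supersingular representation. This two-line argument replaces your entire programme of comparing inducing characters, tracking the $\Gal(\bQ_{p^4}/\bQ_p)$-action on the exponent $\frac{p^2+1}{2}s(r)$, and identifying the resulting symmetry with Proposition \ref{class_irred}(ii). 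Your approach can be made to work---the congruence $s(r)\equiv p^i s(r')\pmod{2(p+1)}$ (after quotienting by shifts from $\omega^a$) does pin down $r'$ as you claim---but you yourself flag the remaining bookkeeping as ``the main obstacle'', and the paper simply sidesteps it. The functorial argument via $\bOmega$ is both shorter and conceptually cleaner; it is worth internalising.
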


\begin{proof}
	This follows from Proposition \ref{image_ss}, Lemma \ref{galois_lemma1}, Lemma \ref{galois_lemma2} and the fact, \cite[Corollary 4.55]{witthaus_meta}, that the isomorphism class of a supersingular representation is determined by its restriction to the mirabolic subgroup $P$. Here, note that the restriction of the canonical map of part (iii) of the cited proposition to the mirabolic subgroup does not use the metaplectic structure.
\end{proof}

\begin{remark}
	Fix a square root $\varpi=\sqrt{-p}$, which serves as a uniformizer in the abelian extension $L=\bQ_{p^2}(\sqrt{p})$ of $\bQ_p$ with norm group $S$. Consider the character
	\[
	\omega_L \colon \mathscr{G}_L \twoheadrightarrow \bF_{p^2}^{\times}, \hspace{0.2cm} g(\sqrt[p^2-1]{\varpi})=\omega_L(g) \sqrt[p^2-1]{\varpi}, \text{ for all $g\in G_L$};
	\]
	it is independent of the chosen $p^2-1$-th root of $\varpi$ since $\mu_{p^2-1}(\ol{\bQ}_p)\subset \bQ_{p^2}\subset L$. We then have $\omega_4^{(p^2+1)/2}|_{\mathscr{G}_{L}\cap \mathscr{G}_{\bQ_{p^4}}}=\omega_L|_{\mathscr{G}_{L}\cap \mathscr{G}_{\bQ_{p^4}}}$. In particular, for any odd integer $h\in \bZ$, we have an isomorphism
	\[
	\Ind^{\mathscr{G}_{\bQ_p}}_{\mathscr{G}_{\bQ_{p^4}}}(\omega_4^{\frac{p^2+1}{2}h})\cong \Ind^{\mathscr{G}_{\bQ_p}}_{\mathscr{G}_L}(\omega_L^h)
	\]
	of irreducible $\mathscr{G}_{\bQ_p}$-representations. Conversely, the character $\omega_L^2$ extends to $\mathscr{G}_{\bQ_{p^2}}$, e.g.\ $\omega_2$, so the right-hand side will be reducible if $h$ is even.
\end{remark}

\bibliography{metaplectic_montreal_arxiv}
\bibliographystyle{plain}

\end{document}